\newtheorem{theorem}{Theorem}[section] 
\newtheorem{lemma}[theorem]{Lemma}     
\newtheorem{corollary}[theorem]{Corollary}
\newtheorem{proposition}[theorem]{Proposition}
\providecommand\ve{\varepsilon}
\providecommand\rr{{\mathbf R}^2}
\providecommand\tra{\mathrm{tr}}
\def\tr{\mathrm{tr}}
\def\trace{\hskip-1pt\mathrel{\hbox{\vrule height 7pt depth 0pt width .3pt%
\vrule height .3pt depth 0pt width 6pt}}}
\providecommand{\aaa}{\mathcal{A}}
\providecommand{\bbb}{\mathcal{B}}
\title[BAP for Sobolev spaces]{Bounded Approximation Property for Sobolev spaces on simply-connected planar domain}%
\author{Maria Roginskaya, Micha\l{} Wojciechowski}%
\date{}
\begin{document}

\maketitle
\begin{abstract}
We show that Sobolev space $W^1_1(\Omega)$ of any planar one-connected domain $\Omega$ has the Bounded Approximation property. The result holds independently from the properties of the boundary of $\Omega$. The prove is based on a new  decomposition of a planar domain.
\end{abstract}

\section{Introduction}
In the present paper we study the bounded approximation property (BAP) of Sobolev spaces $W^1_1(\Omega)$ of functions with integrable gradient on a bounded connected simply connected planar domain $\Omega$.

Recall that a Banach space $X$ has the {\it bounded
approximation property} if for every $\ve>0$ and every finite set
$S\subset X$ there exists a finite rank operator $T_\ve:X\to X$ with
norm bounded independent of $\ve$ and $S$, such that $\|T_\ve x-x\|<\ve$ for all
$x\in S$.

\subsection{Sobolev spaces and $BV$} Let $\Omega\subset \rr$ be an open domain and $E$ be a linear
(finite dimensional) space. By $M(\Omega, E)$ we denote the space
of bounded $E$-valued measures on $\Omega$ with norm
$\|\mu\|_M=|\mu|(\Omega)$ where
$|\mu|$ denotes the total variation of the measure $\mu$
(cf. [DU], Sect. I.1, Def. 4).

By $BV(\Omega)$ we denote the space of all measurable functions summable on $\Omega$,
whose distributive first order derivatives are measures from $M(\Omega,\rr)$. The norm on $BV(\Omega)$ is defined as
$$
\|f\|_{BV}=\|f\|_1+ |\nabla f|(\Omega);
$$
here the symbol $\nabla f$ denotes the distributional gradient of $f$.
By $W^1_{1}(\Omega)$ we denote the subspace of
$BV(\Omega)$ of those functions whose derivatives are absolutely
continuous with respect to the 2-dimensional Hausdorff measure on $\Omega$.

Our main result is the following.

\begin{theorem}\label{main}
For every simply connected bounded planar domain $\Omega$ the space $W^1_1(\Omega)$ has BAP.
\end{theorem}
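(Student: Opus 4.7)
The plan is to exploit the simple connectivity of $\Omega$ to reduce BAP to the classical case of a Lipschitz Jordan sub-domain, where it is standard: such a sub-domain admits a bounded linear extension to $W^1_1(\rr)$, and the latter space has BAP (via, e.g., wavelet expansions or convolution-with-a-bump approximations in the plane). The difficulty is that arbitrary simply connected planar domains need not be $W^1_1$-extension domains, so the whole argument has to be carried out from the inside of $\Omega$.

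The key input, announced in the abstract, is a decomposition $\Omega=\bigcup_{i}\Omega_i$ into a countable family of uniformly nice (say, uniformly biLipschitz to a disk) Jordan sub-domains whose pairwise intersections are Jordan arcs and whose adjacency graph is a tree; the tree property is exactly where simple connectivity enters, since in a simply connected plane domain such a \emph{nerve} cannot contain cycles. I would build a subordinate partition of unity $\{\varphi_i\}$ with uniformly bounded $BV$-norms and, for each piece, a finite-rank operator $T^i_n:W^1_1(\Omega_i)\to W^1_1(\Omega_i)$ with norm bounded independently of $i$ and $n$, converging strongly to the identity. The candidate global operator is then
\[
T_n f=\sum_{i\in I_n}\varphi_i\,T^i_n\!\bigl(f|_{\Omega_i}-c_i(f)\bigr)+\sum_{i\in I_n}c_i(f)\,\varphi_i,
\]
where $I_n$ is a finite truncation of the index tree (making $T_n$ finite-rank) and the scalars $c_i(f)\in\Real$ are chosen recursively along the tree so that, on each shared arc, the reference levels of neighbouring pieces agree.

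The main obstacle is estimating the distributional gradient of $T_nf$ across the shared Jordan arcs $\gamma_{ij}=\overline{\Omega_i}\cap\overline{\Omega_j}$: differentiating the sum above produces, besides the bulk terms $\varphi_i\,\nabla T^i_n(\cdot)$, singular measures on each $\gamma_{ij}$ whose densities are differences of the one-sided traces of $T^i_n$ and $T^j_n$ applied to $f-c_i(f)$. Controlling these requires (i) a trace inequality $W^1_1(\Omega_i)\to L^1(\partial\Omega_i,\hh^1)$ uniform in $i$, available because each $\Omega_i$ is a good Lipschitz disk, (ii) Poincar\'e's inequality on $\Omega_i$ to absorb the scalars $c_i(f)$ and express everything in terms of $|\nabla f|(\Omega_i)$, and (iii) the tree structure, which makes the $c_i(f)$ telescope so that only the local oscillation of $f$ across each interface enters. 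The hardest point is to arrange the decomposition so that the total $\hh^1$-mass of interfaces is controlled by $|\nabla f|(\Omega)$ uniformly in the possibly fractal geometry of $\partial\Omega$; this is precisely the role of the new planar decomposition advertised in the abstract, and where I expect the bulk of the technical work to lie.
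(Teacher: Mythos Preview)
Your proposal shares the right local ingredients with the paper---uniform Lipschitz pieces, trace and Poincar\'e estimates with constants depending only on the biLipschitz constant---but has two genuine gaps. First, the tree claim is wrong: a decomposition of a simply connected domain into uniformly-sized topological disks meeting only along arcs need not have a tree as its adjacency graph (tile a round disk by small squares). Simple connectivity forces a tree nerve only for open covers with connected pairwise intersections, not for tilings along arcs. The paper's Partition Theorem produces instead a \emph{bounded-degree} adjacency graph (each good piece $A\in\mathcal{A}_\ve$ has at most $k$ neighbours) and simply takes $c_A(f)=f_A$, the average over $A$; no telescoping is needed, and the interface jumps sum to at most $Ck\|\nabla f\|_{L^1(\Omega)}$ because each piece is counted at most $k$ times. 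Second, and more seriously, your $T_nf$ lands in $BV(\Omega)$, not $W^1_1(\Omega)$: matching the scalars $c_i$ across an arc does not match the full one-sided traces of $T^i_n(f-c_i)$ and $T^j_n(f-c_j)$, so the singular measures on $\gamma_{ij}$ do not vanish, and bounding their total variation does not put you back in $W^1_1$. The paper confronts this directly: its finite-rank operator $S_\ve$ is built as a map $W^1_1(\Omega)\to BV(\Omega)$, and the return to $W^1_1(\Omega)$ goes through the embedding $BV(\Omega)\hookrightarrow W^1_1(\Omega)^{\ast\ast}$ (Lemma~\ref{lrl}, from~[W]) followed by the principle of local reflexivity. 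You have no substitute for this step.

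Two structural differences are also worth flagging. The paper uses no partition of unity: its operator $T_\ve$ \emph{averages}, sending $f$ to the piecewise constant $f_A$ on each good piece, so that $(\mathrm{Id}-T_\ve)f$ restricted to the good region lives (up to a uniform isomorphism) in an $\ell_1$-sum $\bigoplus_{A\in\mathcal{A}_\ve} W^1_{1,0}(A)$, which has BAP with constant independent of $\ve$ because each summand is uniformly isomorphic to $W^1_1(\rr)$. And the boundary layer is handled by a second family $\mathcal{B}_\ve$ of ``bad'' pieces covering an $\ve$-collar of $\partial\Omega$, each attached to exactly one good piece $A_B$ along an arc $\Lambda(B)$; on these $T_\ve f$ is defined through the trace average $(\tr_B f)_{\Lambda(B)}$ times a near-optimal extension $h_B$, and the total $\mathcal{B}_\ve$-contribution to $\|f-S_\ve f\|$ is shown to be controlled by $\int_{\Omega\cap(\rr\setminus\Omega)_\ve}(|f|+|\nabla f|)$, hence small for $f$ in the given finite-dimensional subspace once $\ve$ is small. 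Your sketch is silent on what happens near $\partial\Omega$, which is where all the geometry of the problem lives.
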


The BAP property of $W^1_1(\Omega)$ has been already proved for some special classes of the domains (cf. [ACPP]). Typically the requirement on the domain includes some sort of smoothness of its boundary. As far to our knowledge this is a first result of this type which poses no conditions on the boundary of the domain.

The main ingredient of the proof of the above statement is the following geometrical construction.

\subsection{Geometric decomposition}

\begin{definition} We say that a set $A$ whose boundary is a simple curve consisting of intervals and circular arc is {\it $\eta$-elementary}\footnote{Notice that domains with this property are Lavrentiev domains, but as we need explicit value of the constant involved, it seems as a less of mouthful to call such domains $\eta$-elementary} when for any pair $x,y\in \partial A$ there exists a rectifiable curve $\gamma\subset \partial A$ connecting $x$ with $y$ such that $|\gamma|\leq \eta\ d(x,y)$.
\end{definition}
(We denote by $d(x,y)$ the Euclidean distance between $x$ and $y$, and by $|\gamma|$ the length of a rectifiable curve $\gamma$.)

\begin{theorem}[(Partition Theorem)]\label{partition} There exists four universal constants $k\in {\mathbf N}$ and $c,C,\eta>0$, such that following holds for every bounded connected simply connected domain $\Omega\subset \rr$. For every $\ve>0$ there exist
two finite families $\aaa_\ve$ and $\bbb_\ve$ of open subsets of $\Omega$,
such that:
\begin{enumerate}
\item\label{dec.Aclosed} $A$ is an $\eta$-elementary domain for every $A\in
\aaa_\ve$;

\item\label{dec.diam} For every $A\in\aaa_\ve$ holds $c\ve\leq {\rm diam}(A)\leq C\ve$;

\item\label{dec.disjoint} every two different sets from $\aaa_\ve\cup \bbb_\ve$ are disjoint;

\item\label{dec.fewA} for every $A\in \aaa_\ve$ there are at most  $k$ different
$A'\in \aaa_\ve$ such that $\overline{A}\cap \overline{A'}\cap\Omega\neq \emptyset$;

\item\label{dec.oneB} for every $A\in \aaa_\ve$ there is at most one
$B\in \bbb_\ve$ such that $\overline{A}\cap \overline{B}\cap\Omega \neq\emptyset$;

\item\label{dec.oneA} for every $B\in \bbb_\ve$ there is exactly one $A=A_B\in\aaa_\ve$
such that $\overline{B}\cap \overline{A_B}\cap\Omega \neq \emptyset$;

\item\label{dec.dec} $\bigcup_{A\in\aaa_\ve}\overline{A}\cup\bigcup_{B\in\bbb_\ve}\overline{B} \supset \Omega$;

\item\label{dec.approx} $(\bigcup\limits_{B\in\bbb_\ve} B) \cup (\bigcup\limits_{B\in \bbb_\ve}A_B)\subset (\rr\setminus \Omega)_\ve$.
\end{enumerate}
\end{theorem}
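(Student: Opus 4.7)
The plan is to build the decomposition from a Whitney-style grid adapted to the scale $\varepsilon$. First I would overlay $\rr$ with an axis-aligned square grid of side length $\varepsilon$ and take $\aaa_\varepsilon^0$ to consist of those grid squares whose closures lie inside $\Omega$. Each such square is trivially $\eta$-elementary (the boundary of a square has Lavrentiev constant at most $\pi\sqrt{2}/2$), has diameter $\sqrt{2}\,\varepsilon$, and has at most eight grid neighbors, so this initial family automatically satisfies properties \ref{dec.Aclosed}, \ref{dec.diam}, and \ref{dec.fewA}. The residual set $R := \Omega \setminus \bigcup_{Q \in \aaa_\varepsilon^0} Q$ lies within the $\varepsilon$-neighborhood of $\partial\Omega$ and is the source of all subtlety; the rest of the argument must decompose $R$ into the pieces $B\in\bbb_\varepsilon$ and pair each such $B$ with a unique, possibly slightly enlarged, $A_B\in\aaa_\varepsilon$.

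Next I would tackle the matching conditions \ref{dec.oneB} and \ref{dec.oneA}. The difficulty is twofold: a single connected component of $R$ can touch many interior grid squares, and conversely a single interior square can border several components of $R$, since $\partial\Omega$ may approach the square from distinct sides without the intervening near-boundary strips being connected inside $R$. The key tool for resolving this is the simple connectedness of $\Omega$: it forces the bipartite adjacency graph between elements of $\aaa_\varepsilon^0$ and connected components of $R$ to be acyclic. Exploiting this tree structure, I would group ``runs'' of grid squares sharing a common boundary component into super-squares $A$, and simultaneously subdivide long components $R_\alpha$ by cutting across narrow places with short arcs so that each resulting piece becomes a single $B$. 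The permission to include circular arcs in the boundary of an $\eta$-elementary set is exactly what is needed to close the cut pieces cleanly, and to glue the enlarged super-squares while keeping the Lavrentiev constant uniformly bounded.

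The hard part will be controlling the four universal constants $c,C,\eta,k$ simultaneously throughout this clustering-and-cutting procedure. Each super-square $A$ must have diameter in $[c\varepsilon,C\varepsilon]$, must remain $\eta$-elementary no matter how many grid squares are merged, and must have bounded valence $k$ in the inter-$A$ adjacency graph; the danger is that a single long component of $R$ could drag a cluster past the diameter bound, or repeatedly pinch between the same two clusters and destroy the neighbor bound. Simple connectedness of $\Omega$ is again what saves us: it rules out spiral or nested configurations of boundary components and prevents a single component of $R$ from returning to the same cluster many times, so the cutting procedure terminates with uniform quantitative control. Once these bounds are in hand, the remaining properties fall out by inspection: \ref{dec.disjoint} and \ref{dec.dec} from the partition structure inherited from the grid, and \ref{dec.approx} because both $R$ and the grid squares immediately adjacent to it lie within $2\varepsilon$ of $\partial\Omega$.
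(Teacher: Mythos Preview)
Your plan has a genuine gap at the matching step, and the tree heuristic does not do the work you want it to. In fact, for a bounded simply connected $\Omega$ the residual set $R=\Omega\setminus\overline{\bigcup_{Q\in\aaa_\varepsilon^0}Q}$ is always a \emph{single} connected component (each component of the closed Whitney union is a simply connected polyomino, and removing finitely many disjoint closed topological disks from an open disk leaves a connected set), so your bipartite adjacency graph is trivially a star. The real obstacle is then cutting this one $R$ into pieces $B$ each meeting a single $A$. A cut whose endpoints both lie on Whitney-square edges stays inside $\Omega$, so it belongs to the $\Omega$-closures of \emph{both} pieces it creates, and each piece still meets the squares on both sides---you have not separated anything in the sense of conditions~(\ref{dec.oneB})--(\ref{dec.oneA}). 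A cut landing on $\partial\Omega$ would genuinely separate, but you give no argument that from a prescribed point of $\partial W$ one can reach $\partial\Omega$ by a curve of length $O(\varepsilon)$ inside $R$; with an arbitrary boundary this is exactly the hard part. Meanwhile, merging all squares bordering the same piece of $R$ into one ``super-square'' has no diameter control whatsoever: a single component of $R$ can run along an arbitrarily long row of grid squares (simple connectedness does \emph{not} exclude long thin strips or spirals), so the merged $A$ violates~(\ref{dec.diam}).

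The paper's argument supplies precisely the missing mechanism, and it is structurally different from yours. Rather than fixing a static grid and repairing the residual, the authors start from a lattice polygon $P_0$ and \emph{iteratively push each side outward} (Lemma~\ref{rectangleonaside} and the construction in \S\ref{decomposition}) until they obtain a polygon $P_d\in VH(3d,\tfrac12 d)$ with the key property that \emph{every side contains a point of $\partial\Omega$} (Proposition~\ref{polygon}). These contact points $\mathcal P=\partial P_d\cap\partial\Omega$ act as built-in breakpoints: the subsequent decomposition of $P_d$ into the pieces $A\in\aaa_\varepsilon$ (Proposition~\ref{cut.polygon}) is arranged so that each $\partial A\cap\partial P_d$ is an arc between two points of $\mathcal P$, and hence every component of $\Omega\setminus\overline{P_d}$---being bounded by an arc of $\partial P_d$ between two points of $\partial\Omega$ and a piece of $\partial\Omega$---can meet only one such $A$. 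This delivers (\ref{dec.oneB}) and (\ref{dec.oneA}) automatically, with no cutting of the bad set at all. The substantial technical work is then entirely inside the polygon: showing that the inner level curve $S_\epsilon=\{x\in P_d:d(x,\partial P_d)=\epsilon\}$ is a simple, locally Lipschitz curve, and using it to cut $P_d$ into uniformly $\eta$-elementary pieces of comparable diameter.
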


Partition Theorem is of its own interest. It presents decomposition of $\Omega$ into "good" pieces $\aaa_\ve$ and "bad" $\bbb_\ve$ which are small and isolated. The cardinalities of $\aaa_\ve$ and $\bbb_\ve$ as  functions of $\ve$ could be used to quantitative description of how "bad" the shape of a simply connected planar domain is. Connections of this measure of badness with the others, such as for example dimension of the boundary is unknown to us. We postpone the proof of the Partition Theorem to section \ref{decomposition}.

\subsection{Notation and organization}
In our construction we often need to use interior/closure of the same set. To avoid a confusion we denote the closure of a set $A$ as $\overline{A}$, the interior as $A^\circ$, and the boundary as $\partial A$. On a few occasion where it doesn't lead to a confusion we write $A$ for $\overline{A}$ or $A^\circ$.

For any set $K$ the set $K_\ve$ stands for $\{x\in \rr: dist(x,K)<\ve\}$.

If $A\subset \Omega$ is a measurable
subset and $\mu\in M(\Omega, E)$, by
$\mu\trace A$ we denote the measure defined by
$(\mu\trace A)(B)=\mu(A\cap B)$. By
$\lambda_n$ we denote the $n$-dimensional Hausdorff measure ($n=1,2$).

By $\gamma$ we denote a simple curve in the plane, and if it is important to us that it has ends  $a$ and $b$ we denote it $\gamma(a,b)$. It is
a topological fact (Jordan lemma) that if we have two curves $\gamma_1(a,b)$ and
$\gamma_2(a,b)$, such that $\gamma_1\cap\gamma_2=\{a,b\}$, then those
two curves together is the boundary of a bounded connected domain, which we denote
$\Omega_{\gamma_1,\gamma_2}$. 


The organization of the paper is as follows. In Section \ref{mainproof} we prove Theorem \ref{main} assuming
Partition Theorem. Proof  uses mainly trace estimate due to Galiardo together with the
Poincare inequality for regular domains.

In Section \ref{decomposition} we give the proof of Partition Theorem \ref{partition}.

\section{Proof of Theorem \ref{main}}\label{mainproof}

\subsection{Averaging Operator}
An implication of the Partition Theorem is the boundedness of the averaging operator defined below.

Let $f\in W^1_1(\Omega)$ and
$A\in \aaa_\ve$. We use the notation $f_A=\lambda_2(A)^{-1}\int_A f\,d\lambda_2$.
By $\mathrm{tr}_A f$ we denote the Galiardo trace of function $f\in W^1_1(A)$ or $f\in BV(A)$ on the boundary $\partial A$ (here and everywhere else in the article we refer to \cite{Z} for standard facts about the Galiardo trace).

Note that since the boundary of an elementary domain $A$ is a union of intervals and circular arcs, the definition of the trace operator is identical to the definition of the trace on a line.

If $\Lambda\subset \partial A$ is a $\lambda_1$-measurable
set of positive $\lambda_1$ measure we put $g_\Lambda=\lambda_1(\Lambda)^{-1}\int_\Lambda g\,d\lambda_1$ for every function $g$ integrable on $\Lambda$.

For $B\in \bbb_\ve$ we set
$ \Lambda(B)=\partial B\cap\partial A_B\cap\Omega$. Note that on $\Lambda(B)$ we have
$\tra_Bf_{|B}=\tra_{A_B}f_{|A_B}$ for $f\in W^1_1(\Omega)$, as the left and right traces coincide inside the domain (see \cite{Z}).

For $f\in BV(\Omega)$ we put

$$
Tf(x)=\left\{\begin{array}{ll}
f_A &\mbox{ for $x\in \mathrm{int} A$}\\
(\tr_B f)_{\Lambda(B)}\cdot h_B(x) &\mbox{for $x\in
\mathrm{int} B$}\end{array}
                     \right.
\eqno{(1)}
$$
 where $h_B$ is any fixed for the given decomposition function from  $BV(B)$, chosen
 so that
 \begin{eqnarray*}
 \|h_B\|_{BV(B)}+\int_{\Lambda(B)}&|\tr_B h_B-1|\,d\lambda_1\\
 &\leq 2\inf_{g\in BV(B)}\big(\|g\|_{BV(B)}
 +\int_{\Lambda(B)}|\tr_B g-1|\,d\lambda_1\big).
 \end{eqnarray*}

\begin{theorem}\label{T:WtoBV} For any decomposition of $\Omega$ provided by the Partition theorem the operator $T: W^1_1(\Omega)\to BV(\Omega)$ defined by (1) is a bounded
operator with norm depending only on $k$ and $\eta$.
\end{theorem}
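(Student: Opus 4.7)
The plan is to estimate $\|Tf\|_{L^1(\Omega)}$ and the gradient measure $|\nabla Tf|(\Omega)$ separately; the latter naturally splits into three parts: (a) distributional gradients inside each $B$-piece, where $Tf=(\tr_B f)_{\Lambda(B)}\,h_B$; (b) jump terms along shared interfaces $\overline A\cap\overline{A'}\cap\Omega$ between two $A$-pieces; and (c) jump terms across $\Lambda(B)$, between $f_{A_B}$ on one side and $(\tr_B f)_{\Lambda(B)}\,\tr_B h_B$ on the other. Each piece must be controlled with constants depending only on $k$ and $\eta$.

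The workhorse is a scale-free Poincar\'e--trace estimate on an $\eta$-elementary domain $A$: for every $f\in W^1_1(A)$,
\[
\int_{\partial A}|\tr_A f - f_A|\,d\lambda_1 \leq C_\eta\,|\nabla f|(A).
\]
This is obtained by applying Gagliardo's trace inequality to $g=f-f_A$; its $\mathrm{diam}(A)^{-1}\|g\|_{L^1}$ term is absorbed by Poincar\'e's inequality on the Lavrentiev domain $A$, and the resulting constant depends only on $\eta$. The estimate immediately controls (b): the triangle inequality with intermediate $(\tr f)_\Gamma$ gives $\int_\Gamma|f_A-f_{A'}|\,d\lambda_1\leq C_\eta(|\nabla f|(A)+|\nabla f|(A'))$ on $\Gamma=\overline A\cap\overline{A'}\cap\Omega$, and summing with the multiplicity bound (4) yields a bound of order $k\,C_\eta\,|\nabla f|(\Omega)$.

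For (a), (c) and the $L^1$ contribution on the $B$-pieces, the key is the near-minimality in the definition of $h_B$. Testing with the admissible functions $g\equiv 0$ and $g\equiv 1$ yields
\[
\|h_B\|_{BV(B)}+\int_{\Lambda(B)}|1-\tr_B h_B|\,d\lambda_1\leq 2\min\{\lambda_1(\Lambda(B)),\lambda_2(B)\}.
\]
I would write $(\tr_B f)_{\Lambda(B)}=f_{A_B}+r_B$, where the Poincar\'e--trace inequality gives $|r_B|\leq C_\eta\,\lambda_1(\Lambda(B))^{-1}\,|\nabla f|(A_B)$, and pair the $\lambda_1(\Lambda(B))$-bound on $h_B$ with $r_B$ (so the factor $\lambda_1^{-1}$ cancels) and the $\lambda_2(B)$-bound on $h_B$ with $f_{A_B}$. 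Each of the three quantities $\|h_B\|_{L^1(B)}$, $|\nabla h_B|(B)$, $\int_{\Lambda(B)}|1-\tr_B h_B|\,d\lambda_1$ then contributes at most
\[
C_\eta\,|\nabla f|(A_B)+\frac{\lambda_2(B)}{\lambda_2(A_B)}\int_{A_B}|f|\,d\lambda_2,
\]
which sums, using (6) and the uniform bound $\lambda_2(B)\leq C\,\lambda_2(A_B)$ coming from the construction of the Partition Theorem, to at most $C_\eta\|f\|_{W^1_1(\Omega)}$. The $L^1$ contribution on the $A$-pieces is immediate since $\int_A|f_A|\,d\lambda_2\leq\int_A|f|\,d\lambda_2$.

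The main obstacle is ruling out spurious factors of $\ve^{-1}$: Gagliardo's trace inequality on a domain of diameter $\sim\ve$ carries an $\ve^{-1}\|f\|_{L^1}$ term, and using only the $\lambda_1(\Lambda(B))$-bound on $h_B$ would propagate this factor into the final estimate. The remedy is precisely the twofold pairing above, exploiting that $h_B$ minimizes \emph{both} competing quantities. A secondary point is to verify the uniform comparability $\lambda_2(B)\leq C\,\lambda_2(A_B)$, which is not listed explicitly among (1)--(8) but is natural because each $B\in\bbb_\ve$ is a small boundary piece attached to a unique $A_B$.
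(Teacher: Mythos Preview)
Your overall strategy---splitting $|\nabla Tf|$ into interior-of-$B$, $A$--$A'$ jump, and $\Lambda(B)$ jump contributions, and handling the $A$--$A'$ jumps via the scale-free Poincar\'e--trace inequality on $\eta$-elementary domains---is exactly the paper's plan, and your treatment of part (b) is correct.

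The gap is in parts (a) and (c). Your argument hinges on the bound $\lambda_2(B)\le C\,\lambda_2(A_B)$, which you flag as ``not listed explicitly among (1)--(8)'' but ``natural''. It is neither listed nor true in general: the Partition Theorem places no size restriction whatsoever on the $B$-pieces. Property~(viii) only says $B\subset(\rr\setminus\Omega)_\ve$, and for a simply connected domain with irregular boundary (fractal, or with long thin tendrils) a single component of $\Omega\setminus\overline{P_d}^\omega$ can have area far larger than $\ve^2\asymp\lambda_2(A_B)$. So testing $h_B$ only against the constants $0$ and $1$ and then multiplying by $|f_{A_B}|$ produces the uncontrolled term $|f_{A_B}|\,\lambda_2(B)$, and the argument breaks.

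The paper's remedy is to test the near-minimality of $h_B$ against $f$ itself rather than against constants. After absorbing the scalar $|(\tr_B f)_{\Lambda(B)}|$ into the competitor via $\tilde g = |(\tr_B f)_{\Lambda(B)}|\,g$, the infimum becomes $\inf_{\tilde g}\big(\|\tilde g\|_{BV(B)}+\int_{\Lambda(B)}|\tr_B\tilde g-(\tr_B f)_{\Lambda(B)}|\big)$, and choosing $\tilde g=f|_B$ yields directly
\[
|(\tr_B f)_{\Lambda(B)}|\Big(\|h_B\|_{BV(B)}+\int_{\Lambda(B)}|1-\tr_B h_B|\Big)
\le 2\|f\|_{BV(B)}+2\int_{\Lambda(B)}\bigl|\tr_B f-(\tr_B f)_{\Lambda(B)}\bigr|\,d\lambda_1.
\]
The boundary integral is then controlled by $C_\eta\,|\nabla f|(A_B)$ via your Poincar\'e--trace estimate, and the sum $\sum_B\|f\|_{BV(B)}\le\|f\|_{W^1_1(\Omega)}$ requires no geometric information about $B$ at all. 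This is the missing idea: the whole point of the near-optimal choice of $h_B$ is that it lets you compare against $f|_B$ directly, sidestepping any need to control the geometry of the bad pieces.
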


In order to prove Theorem \ref{T:WtoBV} we need several lemmas.

\begin{lemma}\label{L:traceestimate} Let $A\subset \rr$ is $\eta$-elementary domain of diameter $1$.
Then, there exists $C=C(\eta)>0$ such that for every
$f\in BV(A)$,
$$
\|\tr_A f\|_{L_1(\partial A)}\leq C(\|\nabla
f\|_{M(A)}+\|f\|_{L^1(A)}).
$$
\end{lemma}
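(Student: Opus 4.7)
The plan is to reduce the inequality to a Gagliardo-type estimate on each arc making up $\partial A$, using $\eta$-elementarity to produce a uniformly ``fat'' collar of $A$ adjacent to its boundary.

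First I would record two consequences of $\eta$-elementarity for a domain of diameter~$1$. Applying the condition to a pair of antipodal points on a diameter immediately gives $\lambda_1(\partial A)\leq 2\eta$. Applying it to suitably chosen pairs of points lying on the two boundary arcs that join a pair of antipodal points shows that these two arcs remain at Euclidean distance bounded below by a constant depending only on $\eta$ inside $A$; in particular $\lambda_2(A)$ has a positive lower bound depending only on $\eta$, and $\partial A$ admits a bilipschitz parametrisation $\psi\colon\mathbb S^1\to\partial A$ with constants depending only on $\eta$.

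Next I would cover $\partial A$ by finitely many sub-arcs $I_j$ of length at most $\rho$, where $\rho=\rho(\eta)$ is a sufficiently small constant and each $I_j$ is contained in a single segment or circular arc of $\partial A$. For each $I_j$ I construct a ``fan'' $R_j\subset A$ of width comparable to $\rho$ lying just inside $A$ along $I_j$, with two properties: every line segment from a point of $I_j$ to a point of $R_j$ is contained in $A$; and the family $\{R_j\}$ has overlap bounded by a constant depending only on $\eta$. The existence of the $R_j$ is the geometric heart of the argument: if no such fan existed then some other portion of $\partial A$ would approach $I_j$ in Euclidean distance while remaining far in arc-length, contradicting $\eta$-elementarity.

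On each pair $(I_j,R_j)$ I apply the classical Gagliardo argument. For smooth $f$, $x\in I_j$, and $y\in R_j$, $|f(x)|\leq |f(y)|+\int_{[x,y]}|\nabla f|$; averaging in $y\in R_j$ and integrating in $x\in I_j$ (after a change of variables along the segments $[x,y]$) yields
$$
\int_{I_j}|\tr f|\,d\lambda_1\leq C(\eta)\Bigl(\rho^{-1}\int_{R_j}|f|\,d\lambda_2+\int_{R_j}|\nabla f|\,d\lambda_2\Bigr).
$$
A density argument via inward mollification extends this to $f\in BV(A)$. Summing over $j$ and using the bounded overlap of $\{R_j\}$ together with $\lambda_1(\partial A)\leq 2\eta$ gives the claimed inequality. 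The main obstacle I foresee is the construction of the fans $R_j$ with uniform size and controlled overlap from the chord-arc condition alone; this is where the explicit piecewise structure of $\partial A$ is essential, because $\eta$-elementarity forces the interior angles at the finitely many corners of $\partial A$ to stay bounded away from $0$, so that the fans can be built uniformly across flat, curved, and corner regions.
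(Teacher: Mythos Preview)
Your approach is genuinely different from the paper's, and the comparison is worth spelling out.

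The paper does not localise at all. It observes that the chord--arc condition on $\partial A$ is exactly the statement that $\partial A$ is the image of the circle under an $\eta$-bi-Lipschitz map, then invokes the planar Sch\"onflies-type extension theorem (Pommerenke, Theorems~7.9--7.10) to extend this to an $M(\eta)$-bi-Lipschitz homeomorphism $\psi\colon\rr\to\rr$ taking the unit disc onto $A$. The trace inequality for the disc is then transported back to $A$ by the change of variables $Tf=f\circ\psi$, with all constants controlled by the bi-Lipschitz constant $M$. The entire geometric content is thus outsourced to one citation.

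Your direct Gagliardo-on-fans argument is the natural ``hands-on'' route and would work, but the step you yourself flag as the obstacle is genuinely the whole difficulty. Two points deserve attention. First, the definition of $\eta$-elementary does not bound the \emph{number} of segments and arcs in $\partial A$ in terms of $\eta$; so you cannot take $\rho$ small enough that every $I_j$ lies in a single boundary piece while keeping $\rho=\rho(\eta)$. You must allow $I_j$ to straddle corners, and then the fan must be built from the chord--arc condition alone, not from the piecewise structure. Second, the existence of a fan $R_j\subset A$ of width $\sim\rho(\eta)$ with $[x,y]\subset A$ for all $x\in I_j$, $y\in R_j$ is essentially the interior corkscrew/John condition for chord--arc domains; this is true, but deducing it cleanly from the chord--arc inequality is comparable in work to what the paper cites from Pommerenke. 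Your remark that interior angles are bounded away from $0$ and $2\pi$ is correct and relevant, but it addresses only the corners, not the global placement of the fan inside $A$.

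In short: the paper's proof is a one-line reduction to the disc via a known extension theorem; yours would be a self-contained proof but needs the uniform fan/corkscrew construction actually carried out, and that construction cannot lean on finiteness of the boundary pieces.
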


\begin{proof} This would be a particular case of Theorem 5.10.7 in \cite{Z}, except we want the constant depend only from $\eta$, but not otherwise from the shape of $A$.

We show how to derive the above estimate from a similar bound for the
unit disk (for which the constant is fixed).

The definition of an $\eta$-elementary domain implies that there exists $\eta$-bi-Lipschitz function
$\psi:\mathbf T \to \partial A\subset \rr$. By theorems 7.9,7.10 in \cite{Pom},
it has $M$-bi-Lipschitz extension $\psi:\rr\mapsto\rr$, which in particular maps
the unit disc $D$ to $A$, with bi-Lipschitz constant $M$ depending only from $\eta$.
Let $T: BV(A)\to BV(D)$ be defined by $Tf(x)=f\circ\psi(x)$ for $x\in D$.
Then, denoting by  $J\psi$ a determinant of the
Jacobi matrix of $\psi$, we get

\begin{eqnarray*}
\|Tf\|_{L^1(D)} =&\int_D |f\circ\psi(x)|\,d\lambda_2(x)\\
= &\int_A |f(y)|\cdot |J\psi^{-1}(y)|\,d\lambda_2(y)
\leq & M^2\|f\|_{L^1(A)},
\end{eqnarray*}
and
$$\|\nabla(Tf)\|_{M(D)}= |\nabla (f\circ \psi)|(D)
\leq  M\cdot|(\nabla f)\circ\psi|(D)\\ =M\cdot |\nabla f|(A)= M\cdot\|\nabla f\|_{M(A)}
\eqno{(2)}
$$
Since the composition with a bi-Lipschitz map $\psi$ is an isomorphism from $L^1(\partial A)$ onto $L^1(\mathbf T)$ with norm depending on $\eta$ only for the traces, we see that
$$
\|\tr_A f\|_{L_1(\partial A)}\leq C(\eta)\|\tr_{\mathbf T} Tf\|_{L^1(\mathbf T)}\leq C C(\eta) (\|\nabla
Tf\|_{M(D)}+\|Tf\|_{L^1(D)})\leq$$ $$ CM^2 C(\eta)(\|\nabla
f\|_{M(A)}+\|f\|_{L^1(A)}),$$
thus proving the lemma.

\end{proof}

\begin{lemma}\label{L:oscilationestimate} Let $A\in \rr$ be an $\eta$-elementary domain with diameter $1$.
Then, there exists $C=C(\eta)>0$ such that for every $f\in BV(A)$,
$$
\|f-f_A\|_{L^1(A)}\leq C\|\nabla f\|_{M(A)}.
$$
\end{lemma}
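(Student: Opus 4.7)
The strategy I would adopt mirrors that of Lemma~\ref{L:traceestimate}: reduce the estimate to the classical Poincar\'e inequality on the unit disk $D$, where the constant is universal, by means of a bi-Lipschitz change of variables whose constant depends only on $\eta$. My first step would therefore be to invoke Pommerenke's theorem exactly as in the preceding lemma to obtain an $M$-bi-Lipschitz map $\psi:\rr\to\rr$ with $\psi(D)=A$ and $M=M(\eta)$, and to set $Tf=f\circ\psi\in BV(D)$.

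Next I would apply the standard Poincar\'e inequality on the disk, $\|Tf-(Tf)_D\|_{L^1(D)}\leq C_0\|\nabla Tf\|_{M(D)}$, with universal constant $C_0$. The bound $\|\nabla Tf\|_{M(D)}\leq M\|\nabla f\|_{M(A)}$ from estimate (2) in the proof of Lemma~\ref{L:traceestimate} transfers the right-hand side back to $A$. For the left-hand side, the Jacobian of $\psi^{-1}$ is bounded below by $M^{-2}$, so a change of variables gives $\|f-c\|_{L^1(A)}\leq M^2\|Tf-c\|_{L^1(D)}$ for every constant $c$.

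The one non-routine point, and what I would flag as the main obstacle, is that the constant $(Tf)_D$ that naturally appears on the disk side is not equal to $f_A$, since $\psi$ is only bi-Lipschitz and not measure-preserving. I would circumvent this by observing that $f_A$ is the best $L^1$ constant approximant of $f$ up to a factor of two: for every constant $c$,
\[
\|f-f_A\|_{L^1(A)}\leq \|f-c\|_{L^1(A)}+\lambda_2(A)|c-f_A|\leq 2\|f-c\|_{L^1(A)},
\]
because $|c-f_A|\leq \lambda_2(A)^{-1}\|f-c\|_{L^1(A)}$. Taking $c=(Tf)_D$ and chaining the three inequalities above yields the desired bound with $C=2M^3C_0=C(\eta)$.
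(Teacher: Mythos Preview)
Your argument is correct and in fact more direct than the paper's. Both proofs begin the same way, by pulling back to the unit disk via the $M$-bi-Lipschitz map $\psi$ furnished by Pommerenke's theorem; the divergence is in how the average is handled. You apply the classical Poincar\'e inequality on $D$ with the disk average $(Tf)_D$, then correct to $f_A$ by the elementary observation that $\|f-f_A\|_{L^1(A)}\leq 2\|f-c\|_{L^1(A)}$ for every constant $c$. The paper instead builds an extension operator $Q:\dot{BV}(A)\to\dot{BV}(2D)$ by the chain $R\circ\widetilde{T}^{-1}\circ S\circ T$ (pull back to $D$, subtract the disk mean and extend by zero, push forward by $\psi^{-1}$, restrict to $2D$), and then invokes Ziemer's weighted Poincar\'e inequality (Theorem~5.12.7 of \cite{Z}) on $2D$ with the measure $\mu=\lambda_2\trace A$, which produces the average $f_A$ directly without any post-hoc correction. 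Your route avoids both the extension machinery and the appeal to the weighted Poincar\'e theorem, at the cost only of the harmless factor~$2$; it is the shorter and more self-contained of the two.
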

\begin{proof}
This is essentially the Poincare lemma. The independence of the constant $C$
from anything but $\eta$ is a well known fact, which is proven for example in \cite{Mar}. We give a proof for the sake of completeness.

Consider the space $\dot{BV}$, which is a space of all measurable locally summable functions whose distributive first order derivative is a finite measure. The norm on $\dot{BV}$ is defined by $\|f\|_{\dot{BV}}=\|\nabla f\|_M$, with the corresponding factorization over the constants.

Let $\psi$ be the mapping from the previous lemma and $T$ the corresponding operator $Tf=f\circ\psi$, $T:\dot{BV}(A)\mapsto \dot{BV}(D)$. By (2) we see that the norm of the operator $T$ is bounded by a constant which depends only from $\eta$.

Denote by $\widetilde{T}^{-1}: \dot{BV}(\rr)\to \dot{BV}(\mathbf\rr)$ given by $\widetilde{T}^{-1}g=g\circ\psi^{-1}$. In the similar fashion we see that the norm of $\widetilde{T}^{-1}$ is bounded by a constant which depends only from $\eta$.

Let further $S: \dot{BV}(D)\to \dot{BV}(\rr)$ denote an extension operator defined as
$$S(f)=\left\{
    \begin{array}{ll}
      f-f_D, & \hbox{ on $D$;} \\
      0, & \hbox{on $\rr\setminus D$.}
    \end{array}
  \right.
$$
For this operator, $\|Sf\|_{\dot{BV}(\rr)}\leq \|Sf\|_{BV(\rr)}\leq C\|f-f_D\|_{BV(D)}\leq C_1\|f\|_{\dot{BV}(D)}$, the second inequality being particular case of Lemma 5.10.4 in \cite{Z}, and the last follows to the classical Poincare inequality for the unit disc.

Finally, let $R:  \dot{BV}(\rr)\to \dot{BV}(2D)$ the operator of restriction to doubled disc, which is obviously has the norm $1$.

We consider the extension operator
$$
Q:= R\circ\widetilde{T}^{-1}\circ S\circ T: \dot{BV}(A)\to \dot{BV}(2D)
$$
whose norm depends from $\eta$ only.

Now we use the Theorem 5.12.7 of \cite{Z}.

\begin{theorem*} Let $\Omega$ be a connected admissible domain in $\mathbb{R}^n$ and let $\mu$ be a non-trivial positive Radon measure such that $\rm{spt}\mu\subset\overline\Omega$ and for some constant $M>0$ that $\mu(B(x,r))\leq M r^{n-1}$ for all balls $B(x,r)$ in $\mathbb{R}^n$. Then there exists a constant $C=C(\Omega)$ such that for each $u\in BV(\Omega)$, $$\|u-\overline{u}\|_{L^2(\Omega)}\leq C\frac{M}{\mu(\overline{\Omega})}\|\nabla u\|_{M(\Omega)},$$
where $\overline{u}=\frac1{\mu(\overline{\Omega})}\int u(x)d\mu(x)$.
\end{theorem*}

We apply the Theorem with $\Omega=2D$ and $\mu=\lambda_2\trace A$ (wlog we may assume $A\subset 2D$).

It is not difficult to see that $\mu(B(x,r))\leq Cr$ for every $x\in \rr$, $r>0$ and some absolute constant $C>0$. Thus, as $c(\eta)\leq \lambda_2(A)\leq C(\eta)$, the theorem claims that for $g\in BV(2D)$ holds,
$$\|g-g_A\|_{L^1(2D)}\leq \sqrt{C_1}\|g-g_A\|_{L^2(2D)}\leq C_2\|\nabla g\|_{M(2D)},$$
where the constant $C_2$ depends only from $\lambda_2(A)$ and, thus, has an upper bound which depends only from $\eta$.

It remains to observe that $\dot{BV}(2D)=BV(2D)$ (though the identification is not bounded) and that for $f\in BV(A)$ holds $(Qf)\trace A=f-c_f$. Thus, $$\|f-f_A\|_{L^1(A)}=\|Qf-(Qf)_A\|_{L^1(A)}\leq  \|Qf-(Qf)_A\|_{L^1(2D)}$$
$$\leq C_2\|\nabla(Qf)\|_{M(2D)}=C_2\|Qf\|_{\dot{BV}(2D)}\leq C_2\|Q\|\cdot\|f\|_{\dot{BV}(A)}=C_3\|\nabla f\|_{M(A)}.$$

\end{proof}

Combining Lemma \ref{L:oscilationestimate} and Lemma \ref{L:traceestimate}
we get following.

\begin{lemma}\label{xyz} Let $A\subset \rr$ is $\eta$-lipschitz domain.
Then, there exists $C=C(\eta)>0$ such that for every $f\in BV(A)$,
$$
\|\tr_A f-f_A\|_{L^1(\partial A)}\leq C\|\nabla f\|_{M(A)}.
$$
\end{lemma}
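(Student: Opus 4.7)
The plan is to combine Lemma \ref{L:oscilationestimate} and Lemma \ref{L:traceestimate} by applying the trace estimate to the zero-mean function $f-f_A$. Since $f_A$ is a constant, $\tr_A(f-f_A)=\tr_A f - f_A$ and $\nabla(f-f_A)=\nabla f$, so Lemma \ref{L:traceestimate} gives
\[
\|\tr_A f - f_A\|_{L^1(\partial A)} \leq C(\eta)\bigl(\|\nabla f\|_{M(A)} + \|f-f_A\|_{L^1(A)}\bigr).
\]
Lemma \ref{L:oscilationestimate} then controls the second summand by $C(\eta)\|\nabla f\|_{M(A)}$, yielding the stated bound.

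A small wrinkle is that the two feeder lemmas are stated under the normalization $\mathrm{diam}(A)=1$, while the conclusion we want is for a general $\eta$-elementary domain (the statement as written also has a typo, ``$\eta$-lipschitz'', which one should read as ``$\eta$-elementary''). I would therefore begin with a scaling remark: if $A$ has diameter $r$ and $\tilde f(x)=f(rx)$ on $\tilde A = r^{-1}A$, then both sides of the desired inequality rescale by the same factor of $r$, so both inequalities above transfer to $A$ with the same constant $C(\eta)$. This reduces the general case to the unit-diameter one already covered.

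There is no real obstacle beyond this bookkeeping: the whole content of the lemma is the combination described, and scale invariance of the inequality (because $\|\nabla f\|_{M(A)}$, $\|f-f_A\|_{L^1(A)}$, and $\|\tr_A f-f_A\|_{L^1(\partial A)}$ all carry the same dimensionality under rescaling of the plane) makes the reduction to diameter $1$ automatic.
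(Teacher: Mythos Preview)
Your proof is correct and essentially identical to the paper's: the paper also first reduces to $\mathrm{diam}(A)=1$ by the scale-invariance of the target inequality, then applies Lemma~\ref{L:traceestimate} to $g=f-f_A$ and finishes with Lemma~\ref{L:oscilationestimate}. One small slip in your parenthetical remark: $\|f-f_A\|_{L^1(A)}$ does not carry the same homogeneity as the other two quantities (it scales like $r^2$, not $r$), but this is immaterial since only the scale-invariance of the final inequality is actually used.
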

\begin{proof} As both parts re-scale of order $1$ when one changes the size of $A$, we may assume
that diameter of $A$ is $1$.

Consider $g=f-f_A$. Using Lemma \ref{L:traceestimate}
we see that
\begin{eqnarray*}
\|\tr_A f-f_A\|_{L^1(\partial A)}&=\|\tr_A g\|_{L_1(\partial A)}\\ &\leq C(\|\nabla
g\|_{M(A)}+\|g\|_{L^1(A)})\\&=C(\|\nabla
f\|_{M(A)}+\|f-f_A\|_{L^1(A)}).
\end{eqnarray*}

Applying now Lemma \ref{L:oscilationestimate} to the second term
on the right side we derive the desired conclusion.
\end{proof}

Next three lemmas will be used to  prove of the
boundedness of the averaging operator.

\begin{lemma}\label{abc} There exists $C=C(\eta)>0$ with the following property.
 Let $A, A'\in \aaa_\ve$
and $\Lambda=\partial A\cap\partial A'  \cap \Omega$. Let
$D_{A,A'}=(\overline{A\cup A'}\cap \Omega)^\circ$.
Then for every $f\in W^1_1(D_{A',A'})$,
$$
\int_{\Lambda}|f_A-f_{A'}|\,d\lambda_1\leq C\|\nabla
f\|_{L^1(D_{A,A'})}.
$$
\end{lemma}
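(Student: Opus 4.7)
The plan is to decompose $|f_A-f_{A'}|$ via the traces of $f$ on the shared edge $\Lambda$ and then invoke Lemma \ref{xyz} on each of the two $\eta$-elementary pieces.

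First I would set up the geometry: by items (1) and (3) of the Partition Theorem, $A,A'\in\aaa_\ve$ are both $\eta$-elementary and disjoint, so $A\cup A'\subset D_{A,A'}$. Crucially, because $f\in W^1_1(\Omega)$ and $\Lambda\subset\Omega$ sits strictly inside the ambient domain, the one-sided traces of $f$ from $A$ and from $A'$ agree $\lambda_1$-almost everywhere on $\Lambda$; this is the same internal compatibility of traces already invoked in the paragraph preceding formula (1), and it is the substantive geometric input.

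Second, working pointwise $\lambda_1$-a.e.\ on $\Lambda$ I would apply the triangle inequality
\[
|f_A-f_{A'}|\leq |f_A-\tr_A f(x)|+|\tr_A f(x)-\tr_{A'}f(x)|+|\tr_{A'}f(x)-f_{A'}|,
\]
observe that the middle term vanishes a.e.\ on $\Lambda$ by the previous step, integrate over $\Lambda$, and enlarge the two resulting integrals to run over $\partial A$ and $\partial A'$ respectively, yielding
\[
\int_\Lambda |f_A-f_{A'}|\,d\lambda_1\leq \int_{\partial A}|\tr_A f-f_A|\,d\lambda_1+\int_{\partial A'}|\tr_{A'}f-f_{A'}|\,d\lambda_1.
\]

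Third, I would apply Lemma \ref{xyz} separately to $f|_A$ and to $f|_{A'}$. Since the constant in that lemma depends only on $\eta$ and the bound is scale-invariant (the proof reduces to diameter $1$ by rescaling), I get a bound by $C(\eta)\bigl(\|\nabla f\|_{M(A)}+\|\nabla f\|_{M(A')}\bigr)$. As $f\in W^1_1(\Omega)$, the total variation of $\nabla f$ coincides with its $L^1$ norm, and the disjointness of $A$ and $A'$ inside $D_{A,A'}$ lets me combine the two summands into the single quantity $\|\nabla f\|_{L^1(D_{A,A'})}$, finishing the proof.

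The argument is essentially bookkeeping; the only non-trivial input is Lemma \ref{xyz} itself, while the only subtle point is the a.e.\ equality $\tr_A f=\tr_{A'} f$ on $\Lambda$, which already appears as a standard fact in the paper. I do not anticipate any serious obstacle beyond being careful that $\|\nabla f\|_{M(A)}+\|\nabla f\|_{M(A')}\leq \|\nabla f\|_{L^1(D_{A,A'})}$, which follows from the disjointness in item (3) of the Partition Theorem.
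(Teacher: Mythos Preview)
Your proposal is correct and follows essentially the same argument as the paper: insert the trace on $\Lambda$ via the triangle inequality, use the equality $\tr_A f=\tr_{A'}f$ on $\Lambda$, enlarge the integrals to $\partial A$ and $\partial A'$, apply Lemma~\ref{xyz} on each piece, and sum. The only minor slip is that the lemma assumes $f\in W^1_1(D_{A,A'})$ rather than $f\in W^1_1(\Omega)$, but the trace equality on $\Lambda$ still holds because $\Lambda$ lies in the interior of $D_{A,A'}$.
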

\begin{proof} For $f\in W^1_1(D_{A,A'})$ on $\Lambda$ holds $\rm{tr}_A f=\rm{tr}_{A'}f$. Using first the triangle inequality and then Lemma \ref{xyz} we get
\begin{eqnarray*}
\int_{\Lambda}|f_A-f_{A'}|\,d\lambda_1 &\leq
\int_{\Lambda}|\tr_A f-f_A|\,d\lambda_1  +\int_{\Lambda}|\tr_{A'}
f-f_{A'}|\,d\lambda_1 \\
&\leq  \int_{\partial A}|\tr_A f-f_A|\,d\lambda_1 +\int_{\partial
A'}|\tr_{A'} f-f_{A'}|\,d\lambda_1\\
&\leq C \|\nabla f\|_{L^1(A)} + C\|\nabla f\|_{L^1(A')}\\
&\leq  C\|\nabla f\|_{L^1(D_{A,A'})}
\end{eqnarray*}
\end{proof}

The following two lemmas will be used to estimate the behavior of the averaging operator $T$ near the common boundary of two pieces of which only one is $\eta$-Lipschitz.

\begin{lemma}\label{pqr} There exists $C=C(\eta)>0$ such that for every $B\in\bbb_\ve$  and
$f\in W^1_1(\Omega)$,
$$
\int_{\Lambda(B)}|(\tr_{B} f)_{\Lambda(B)}-f_{A_B}|\,d\lambda_1\leq C\|\nabla
f\|_{L^1(A_B)},
$$
where $\Lambda(B)=\partial B\cap\partial A_B\cap\Omega$.
\end{lemma}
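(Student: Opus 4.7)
The plan is to reduce the estimate to Lemma \ref{xyz} applied to the $\eta$-elementary domain $A_B$. The key ingredient, already noted in the paragraph introducing $\Lambda(B)$, is that for $f\in W^1_1(\Omega)$ the two one-sided Galiardo traces $\tr_B f$ and $\tr_{A_B} f$ agree on the interior interface $\Lambda(B)=\partial B\cap\partial A_B\cap\Omega$. Consequently $(\tr_B f)_{\Lambda(B)}=(\tr_{A_B} f)_{\Lambda(B)}$, and we may replace $\tr_B f$ by $\tr_{A_B} f$ throughout the proof; from this point on only $A_B$ matters.

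Since both $(\tr_{A_B} f)_{\Lambda(B)}$ and $f_{A_B}$ are constants on $\Lambda(B)$, the integral on the left equals $\lambda_1(\Lambda(B))\cdot |(\tr_{A_B} f)_{\Lambda(B)}-f_{A_B}|$. Writing the difference as the $\Lambda(B)$-average of $\tr_{A_B} f - f_{A_B}$ and pulling the absolute value inside, I get
$$\int_{\Lambda(B)}|(\tr_B f)_{\Lambda(B)}-f_{A_B}|\,d\lambda_1\leq \int_{\Lambda(B)}|\tr_{A_B} f - f_{A_B}|\,d\lambda_1\leq \int_{\partial A_B}|\tr_{A_B} f - f_{A_B}|\,d\lambda_1.$$
Since $A_B$ is $\eta$-elementary, Lemma \ref{xyz} bounds the rightmost quantity by $C(\eta)\|\nabla f\|_{L^1(A_B)}$, which completes the proof.

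There is no real obstacle here: the lemma is essentially a packaging of Lemma \ref{xyz} with the trivial fact that averaging on a subset $\Lambda(B)\subset\partial A_B$ against a constant does not increase the $L^1$-deviation, together with the compatibility of Sobolev traces across an interior arc. The latter is the only place where the hypothesis $f\in W^1_1(\Omega)$ (as opposed to $f\in BV(A_B)$) is genuinely used, since it is what guarantees that the traces from the two sides coincide on $\Lambda(B)$.
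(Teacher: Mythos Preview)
Your proof is correct and follows essentially the same approach as the paper: replace $\tr_B f$ by $\tr_{A_B} f$ using trace compatibility on the interior arc, note the integrand is a constant, bound the average by the integral of the absolute value over $\Lambda(B)\subset\partial A_B$, and apply Lemma~\ref{xyz}. The paper's proof is just a slightly more verbose version of the same chain of equalities and inequalities.
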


\begin{proof} Again, as $f\in W^1_1(\Omega)$, we know that $\rm{tr}_B f=\rm{tr}_{A_B} f$ on $\Lambda(B)$. So,
\begin{eqnarray*}
\int_{\Lambda(B)}|(\tra_B f)_{\Lambda(B)}-f_{A_B}|\,d\lambda_1 &=
 \int_{\Lambda(B)}|(\tra_{A_B} f)_{\Lambda(B)}-f_{A_B}|\,d\lambda_1\\
& \mbox{(notice that the integrand is a constant)}\\
&=\lambda_1(\Lambda(B))\cdot |(\tr_{A_B} f)_{\Lambda(B)}-f_{A_B}|\\
&=\lambda_1(\Lambda(B))\cdot |(\tra_{A_B} f-f_{A_B})_{\Lambda(B)}|\\
  &=\big|\int_{\Lambda(B)}(\tra_{A_B} f-f_{A_B})\,d\lambda_1\big|\\
  &\leq \int_{\Lambda(B)}|\tra_{A_B} f-f_{A_B}|\,d\lambda_1\\
  &\leq \int_{\partial A_B}|\tr_{A_B} f-f_{A_B}|\,d\lambda_1\\
 \mbox{(which by the Lemma \ref{xyz})}&\leq C\|\nabla f\|_{L^1(A_B)}.
\end{eqnarray*}

\end{proof}

\begin{lemma}\label{L27}
There exists $C=C(\eta)>0$ such that for every $B\in\bbb_\ve$ and $f\in W^1_1(\Omega)$,
\begin{eqnarray*}
|(\tra_B f)_{\Lambda(B)}|\cdot\big( \|h_B\|_{BV(B)}&+\int_{\Lambda(B)}|\tra_B h_B-1|\,d\lambda_1\big)\\
&\leq C\big(\|f\|_{BV(B)}+\|f\|_{BV(A_B)}\big).
\end{eqnarray*}
\end{lemma}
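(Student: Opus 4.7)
The plan is to test the defining infimum for $h_B$ with an explicit function built from $f$ itself. Set $\alpha := (\tra_B f)_{\Lambda(B)}$ and assume $\alpha \neq 0$ (otherwise the left-hand side vanishes). Then $g := f/\alpha$ belongs to $BV(B)$ and is an admissible competitor in the infimum that defines $h_B$, so by the choice of $h_B$,
$$
\|h_B\|_{BV(B)} + \int_{\Lambda(B)}|\tra_B h_B - 1|\,d\lambda_1 \leq 2\Big(\|g\|_{BV(B)} + \int_{\Lambda(B)}|\tra_B g - 1|\,d\lambda_1\Big).
$$
Using $\tra_B g = \tra_B f / \alpha$ and the absolute homogeneity of the $BV$ norm, multiplying through by $|\alpha|$ should give
$$
|\alpha|\Big(\|h_B\|_{BV(B)} + \int_{\Lambda(B)}|\tra_B h_B - 1|\,d\lambda_1\Big) \leq 2\|f\|_{BV(B)} + 2\int_{\Lambda(B)}|\tra_B f - \alpha|\,d\lambda_1.
$$

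The task then reduces to bounding the boundary oscillation $\int_{\Lambda(B)}|\tra_B f - \alpha|\,d\lambda_1$ by $C\|f\|_{BV(A_B)}$. The key input is that since $f\in W^1_1(\Omega)$, the two-sided traces agree on $\Lambda(B)$, i.e.\ $\tra_B f = \tra_{A_B} f$. I then insert the intermediate value $f_{A_B}$ and apply the triangle inequality:
$$
\int_{\Lambda(B)}|\tra_{A_B} f - \alpha|\,d\lambda_1 \leq \int_{\Lambda(B)}|\tra_{A_B} f - f_{A_B}|\,d\lambda_1 + \int_{\Lambda(B)}|f_{A_B} - \alpha|\,d\lambda_1.
$$
The first summand is controlled by $\int_{\partial A_B}|\tra_{A_B} f - f_{A_B}|\,d\lambda_1 \leq C\|\nabla f\|_{L^1(A_B)}$ via Lemma~\ref{xyz}, whose hypothesis is satisfied since $A_B \in \aaa_\ve$ is $\eta$-elementary. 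The second summand is exactly the integral bounded in Lemma~\ref{pqr}, giving $\leq C\|\nabla f\|_{L^1(A_B)}$.

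Collecting these estimates produces the claimed inequality, with constant on the order of the maximum of those furnished by Lemmas~\ref{xyz} and~\ref{pqr}. I do not expect a real obstacle here: the only idea required is the choice of test function $g = f/\alpha$, which is essentially forced by the homogeneity of the two sides. Everything else is a mechanical recombination of oscillation estimates already established.
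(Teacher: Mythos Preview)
Your argument is correct and is essentially identical to the paper's proof: the paper also uses the defining near-infimum for $h_B$, rescales via $\tilde g = (\tr_B f)_{\Lambda(B)}\,g$ (equivalently, takes $g=f/\alpha$ as you do), and then bounds $\int_{\Lambda(B)}|\tr_B f-(\tr_B f)_{\Lambda(B)}|\,d\lambda_1$ by inserting $f_{A_B}$ and invoking Lemmas~\ref{xyz} and~\ref{pqr}. The only difference is cosmetic phrasing of the homogeneity step.
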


\begin{proof}
 Using the definition of $h_B$, and the fact that, as $f\in W^1_1(\Omega)$, $\rm{tr}_B=\rm{tr}_{A_B}$ on $\Lambda(B)$, we see that
\begin{eqnarray*}
|(\tra_B f)_{\Lambda(B)}|\cdot\big( \|h_B\|_{BV(B)}+\int_{\Lambda(B)}|\tra_B h_B-1|\,d\lambda_1\big)\\
\leq 2|(\tr_Bf)_{\Lambda(B)}| \inf_{g\in BV(B)}\big(\|g\|_{BV(B)}
+\int_{\Lambda(B)}|\tr_Bg-1|\,d\lambda_1\big)\\
\mbox{(when $\tilde{g}=|(\tr_Bf)_{\Lambda(B)}|g$})\\
=2 \inf_{\tilde{g}\in BV(B)}\big(\|\tilde{g}\|_{BV(B)}
+\int_{\Lambda(B)}|\tr_B\tilde{g}-(\tr_Bf)_{\Lambda(B)}|\,d\lambda_1\big)\\
\leq 2\|f\|_{BV(B)}
+2\int_{\Lambda(B)}|\tr_Bf-(\tr_Bf)_{\Lambda(B)}|\,d\lambda_1\\
\mbox{(and by a triangle inequality)}\\
\leq
2\|f\|_{BV(B)}+2\int_{\Lambda(B)}|f_{A_B}-(\tr_{B}f)_{\Lambda(B)}|\,d\lambda_1
 +2\int_{\Lambda(B)}|\tr_{A_B}f-f_{A_B}|\,d\lambda_1.
\end{eqnarray*}
Estimating now the first integral by Lemma \ref{pqr} and the second by Lemma \ref{xyz}
we get the desired bound.
\end{proof}

\begin{proof}[of Theorem \ref{T:WtoBV}] Let $f\in W^1_1(\Omega)$. Then
\begin{eqnarray}\label{ast}
\nabla (Tf) &=  \sum_{A,A'\in\aaa_\ve} \nabla(Tf)\trace(\partial
A\cap\partial A')
\end{eqnarray}
$$+ \sum_{B\in\bbb_\ve}\nabla(Tf)\trace \Lambda(B) +\sum_{B\in\bbb_\ve}\nabla(Tf)\trace B.$$

If $A,A'\in\aaa_\ve$ and $\Lambda=\partial A\cap\partial A'\cap \Omega\neq \emptyset$ it is easy to see that $\nabla Tf$ is a one-dimensional vector valued measure for which
by Lemma \ref{abc} holds,

$$
 \|\nabla(Tf)\trace(\partial A\cap\partial
A')\|_{M(\Omega)}=\int_{\Lambda}|f_A-f_{A'}|\,d\lambda_1 \leq
C\|\nabla f\|_{L^1(A\cup A')}.
$$
If $\partial A\cap\partial A'\cap \Omega= \emptyset$ the corresponding term vanishes.

Summing over all neighboring pairs $A, A'\in \aaa_\ve$, and taking
into account that by (iii) every $A\in \aaa_\ve$ has at most $k$
different neighbors  $A'\in \aaa_\ve$ we get for the first sum in
(\ref{ast})
$$
\|\sum_{A,A'\in\aaa} \nabla(Tf)\trace(\partial A\cap\partial
A')\|_{M(\Omega)}\leq C\cdot k\cdot\|\nabla f\|_{L^1(\Omega)}\leq C'\|f\|_{W^1_1(\Omega)}.
$$
For $B\in \bbb_\ve$ we get by the triangle inequality
\begin{eqnarray*}
\|\nabla(Tf)\trace \Lambda(B)\|_{M(\Omega)}
&=\int_{\Lambda(B)}|(\tr_Bf)_{\Lambda(B)}\cdot\tr_B
h_B - f_{A_B}|\,d\lambda_1\\
&\leq \int_{\Lambda(B)}|(\tr_Bf)_{\Lambda(B)}\tr_B h_B-(\tr_Bf)_{\Lambda(B)}|\,d\lambda_1\\
&\qquad + \int_{\Lambda(B)}|(\tr_B f)_{\Lambda(B)}-f_{A_B}|\,d\lambda_1
\end{eqnarray*}
By Lemma \ref{pqr}, the second integral on the right hand side is bounded by
$C\|\nabla f\|_{L^1(A_B)}$. By Lemma \ref{L27} the first summand is bounded by
$C\big(\|f\|_{BV(B)}+\|f\|_{BV(A_B)}\big)$. Hence
$$
\|\nabla(Tf)\trace \Lambda(B)\|_{M(\Omega)}\leq
C\big(\|f\|_{BV(B)}+\|f\|_{BV(A_B)}\big).
$$
Summing over $B\in \bbb_\ve$ and taking into account that all $A_B$ are distinct we get for the second sum in \ref{ast}
$$
\|\sum_{B\in\bbb_\ve}\nabla(Tf)\trace \Lambda(B)\|_{M(\Omega)}
\leq C\cdot \| f\|_{W^1_1(\Omega)}.
$$
For the third sum in (\ref{ast}) we have
\begin{eqnarray*}
 \|\sum_{B\in\bbb_\ve}\nabla(Tf)\trace
B\|_{M(\Omega)}&=\sum_{B\in\bbb_\ve}\|\nabla(Tf)\trace
B\|_{M(\Omega)}\\
&=\sum_{B\in\bbb_\ve}\|\nabla h_B\|_{M(B)}\cdot|(\tr_Bf)_{\Lambda(B)}|
\end{eqnarray*}
By Lemma \ref{L27} the last sum is dominated by
$$
C\sum_{B\in\bbb_\ve}\big(\|f\|_{BV(B)}+\|f\|_{BV(A_B)}\big) \leq C \|f\|_{W^1_1(\Omega)}.
$$
Summing all the three estimates shows $\|\nabla(Tf)\|_{M(\Omega)}\leq C\|f\|_{W^1_1(\Omega)}$. The estimate $\|Tf\|_1\leq C \|f\|_{W^1_1(\Omega)}$ is trivial, which completes the proof of Theorem \ref{T:WtoBV}.
\end{proof}

\subsection{Proof of Theorem \ref{main}}

Let $E\subset W^1_1(\Omega)$ be a fixed finite dimensional subspace and $\delta>0$.
Let $\ve>0$ be such that for every $f\in E$
$$
\int_{\Omega\cap(\rr\setminus\Omega)_\ve}|f|+|\nabla f|\,d\lambda_2<\delta\cdot\|f\|_{W^1_1(\Omega)}
$$
Let $\aaa_\ve$, $\bbb_\ve$ be the families of sets provided by Partition Theorem. Let $T_\ve:W^1_1(\Omega)\to BV(\Omega)$ be the operator given by (1). Let further $\Xi_\ve=\big(\bigcup_{A\in\aaa_\ve}\overline{A}\cap\Omega\big)^\circ$ and $V_\ve:W^1_1(\Omega)\to BV(\Omega)$ be the composition
$$
V_\ve=R_\ve\circ (Id-T_\ve),
$$
where $R_\ve: BV(\Omega)\to BV(\Omega)$ is a restriction operator $f\mapsto f{\mathbf 1}_{\Xi_\ve}$.

Despite the fact that $R_\ve$ itself may not be bounded independently from $\ve$,
we can prove that $V_\ve$ does. Indeed, the image of $R_\ve\circ (Id-T_\ve)$ is contained in the space
$$
Y_\ve=\{ f\in BV(\Omega): f_A=0\quad\mbox{ for $A\in\aaa_\ve$, $f|_{A}\in W^1_1(A)$, and $f|_{B}\equiv 0$ for $B\in\bbb_\ve$} \}
$$
If $f\in Y_\ve$ then
\begin{eqnarray*}
\|f\|_{BV(\Omega)}&=\|f\|_{L^1(\Omega)}+\sum_{A\in\aaa_\ve} |\nabla f|(A)+
\sum_{A,A'\in \aaa_\ve} |\nabla f|(\partial A\cap\partial A'\cap\Omega)\\
&\qquad\qquad\qquad\qquad+\sum_{B\in\bbb_\ve}|\nabla f|(\Lambda(B))\\
&=\|f\|_{L^1(\Omega)}+\sum_{A\in\aaa_\ve} |\nabla f|(A)+
\sum_{A,A'\in \aaa_\ve} \int_{\partial A\cap\partial A'\cap\Omega}|\tra_A f-\tra_{A'}f|\,d\lambda_1\\
&\qquad\qquad\qquad\qquad+
\sum_{B\in\bbb_\ve} \int_{\Lambda(B)}|\tra_{A_B}f|\,d\lambda_1\\
&\leq \|f\|_{L^1(\Omega)}+\sum_{A\in\aaa_\ve} |\nabla f|(A)+
\sum_{A\in \aaa_\ve} \int_{\partial A}|\tra_A f|\,d\lambda_1.
\end{eqnarray*}
The last inequality follows from $\bigcup_{A,A'\in\aaa_\ve} (\partial A\cap\partial A'\cap\Omega)\cup \bigcup_{B\in\bbb_\ve} \Lambda(B)=\bigcup_{A\in\aaa_\ve}\partial A$, with everything but a set of $\lambda_1$-measure zero counted different amount of times on both sides.

Remembering that $f_A=0$ for $A\in \aaa_\ve$ and applying Lemma \ref{xyz} we get for $f\in Y_\ve$,
$$
\|f\|_{BV(\Omega)}\leq \|f\|_{L^1(\Omega)}+C\sum_{A\in\aaa_\ve} |\nabla f|(A),
\eqno{(3)}
$$
where the constant $C$ does not depend from $\ve$, but only from the universal constants $k$ and $\eta$.

Now the boundedness of $R_\ve\circ(Id-T_\ve)$ follows from the relations
$$
\|R_\ve\circ(Id -T_\ve)f\|_{L^1(\Omega)}\leq \|f\|_{L^1(\Omega)}+\|T_\ve f\|_{L^1(\Omega)}
$$
where $T_\ve$ is bounded and
$$
|\nabla(Id-T_\ve)f|(A)=|\nabla f|(A)\qquad\mbox{for $A\in \aaa_\ve$}.
$$

Another corollary of formula (3) is the following observation. Let
$X_\ve=\big(\bigoplus_{A\in \aaa_\ve}{W}^1_{1,0}(A)\big)_{\ell_1}$, where $W^1_{1,0}(A)=\{f\in W^1_1(A):f_A=0\}$. The norm of
$(g^A)_{A\in\aaa_\ve}\in X_\ve$ is given by
$$
\|(g^A)\|_{X_\ve}=\sum_{A\in\aaa_\ve}\|g^A\|_{L^1(A)}+\sum_{A\in\aaa_\ve} |\nabla g^A|(A).
$$
By (3) the space $Y_\ve$ is isomorphic to $X_\ve$ with the norm of the isomorphism dependent only $\eta$ and not $\ve$, when the isomorphism $J_\ve: Y_\ve\mapsto X_\ve$ is given by
$$
J_\ve: f\mapsto (f_{|A})_{A\in\aaa_\ve}\in X_\ve.
$$

Obviously $W^1_{1,0}(A)$, as a complemented subspace of $W^1_1(A)$ of co-dimension one, is isomorphic to the latter with norm non exceeding 2. By \cite{St} Thm 5 Chapt.  VI, there exists an extension operator $Ext: W^1_1(A)\to W^1_1(\mathbb{R}^2)$ with norm depending only on the bi-Lipschitz constant of $\partial A$. It follows from \cite{PW1} Thm 10 (see also \cite{PW2}) that $W^1_{1,0}(A)$ is isomorphic to $W^1_1(\mathbb{R}^2)$ with uniformly bounded Banach Mazur distance.  Therefore $X_\epsilon$ is isomorphic to the $l^1$ sum of
to $W^1_1(\mathbb{R}^2)$ spaces with Banach-Mazur distance independent of $\epsilon$. Hence, the space $X_\epsilon$ has BAP with constant independent of $\epsilon$.

By II.E.12b in \cite{Woj} there exists finite rank operator $H_\ve: X_\ve\to X_\ve$ such that $H_\ve h=h$ for $h\in J_\ve\circ R_\ve\circ(Id-T_\ve) E$ and $\|H_\ve\|\leq C(\eta)$, where $C(\eta)$ depend only from the BAP constant of $X_\ve$ and, thus, not from $\ve$.

We have
$$
H_\ve\circ J_\ve\circ R_\ve\circ(Id-T\ve)f=J_\ve\circ R_\ve\circ(Id-T_\ve)f\mbox{ for $f\in E$},
$$
It follows that the composition $J^{-1}_\ve\circ H_\ve\circ J_\ve\circ R_\ve\circ(Id-T_\ve): W^1_1(\Omega)\to Y_\ve$, which is a finite rank operator, satisfies for $f\in E$,
$$
J^{-1}_\ve\circ H_\ve\circ J_\ve\circ R\ve\circ(Id-T_\ve)f= R_\ve\circ(Id-T_\ve)f.
\eqno{(4)}
$$
Let us define now $S_\ve: W^1_1(\Omega)\to BV(\Omega)$ by the formula
$$
S_\ve= T_\ve+J_\ve^{-1}\circ H_\ve\circ J_\ve\circ R_\ve\circ(Id-T_\ve)
$$
Clearly it is a bounded finite rank operator with norm independent of $\ve$. Moreover,
if $f\in E$ then by (4),
$$
S_\ve f=T_\ve f+R_\ve\circ(Id-T_\ve)f
$$
and therefore
$$
\|f-S_\ve f\|_{BV(\Omega)}=\|(Id-R_\ve)\circ (Id-T_\ve)f\|_{BV(\Omega)}.
\eqno{(5)}
$$
Since
$$
(Id-R_\ve)\circ (Id-T_\ve)f=\sum_{B\in\bbb_\ve}f|_B - (\tra_B f)_{\Lambda(B)}\cdot h_B
$$
we get
\begin{eqnarray*}
\|f-S_\ve f\|_{BV(\Omega)}&=
\sum_{B\in\bbb_\ve}|(\tra_B f)_{\Lambda(B)}|\cdot\int_B|\nabla h_B|+|h_B|\,d\lambda_2\\&\qquad+
\sum_{B\in\bbb_\ve}\int_{\Lambda(B)}|\tra_B f-(\tra_B f)_{\Lambda(B)}\cdot\tra_B h_B|\,d\lambda_1\\
&\qquad\qquad+\sum_{B\in\bbb_\ve}\int_B|\nabla f|
+|f|\,d\lambda_2 \\
&\leq \sum_{B\in\bbb_\ve}|(\tra_B f)_{\Lambda(B)}|\cdot
\big(\|h_B\|_{BV(B)}+\int_{\Lambda(B)}|1-\tra_B h_B|\,d\lambda_1\big)\\
&\qquad+\sum_{B\in\bbb_\ve}\int_{\Lambda(B)}|\tra_B f-(\tra_B f)_{\Lambda(B)}|\,d\lambda_1\\
&\qquad\qquad+\sum_{B\in\bbb_\ve}\|f\|_{BV(B)}
\end{eqnarray*}

Applying  now Lemma \ref{L27} to the first sum we get bound
$C\cdot \sum_{B\in\bbb_\ve}\big(\|f\|_{BV(B)}+\|f\|_{BV(A_B)}\big)$. For the second sum
we get the same bound using the triangle inequality
\begin{eqnarray*}
\sum_{B\in\bbb_\ve}\int_{\Lambda(B)}|\tra_Bf-(\tra_B f)_{\Lambda(B)}|\,d\lambda_1&\leq
\sum_{B\in\bbb_\ve}\int_{\Lambda(B)}|\tra_Bf-f_{A_B}|\,d\lambda_1\\
&\qquad +\sum_{B\in\bbb_\ve}\int_{\Lambda(B)}|f_{A_B}-(\tra_B f)_{\Lambda(B)}|\,d\lambda_1
\end{eqnarray*}
and Lemma 2.4 and  2.6 respectively.

Therefore we get

\begin{eqnarray*}
\|f-S_\ve f\|_{BV(\Omega)}&\leq C\cdot \sum_{B\in\bbb_\ve}
\big(\|f\|_{BV(B)}+\|f\|_{BV(A_B)}\big)\\
&\leq\int_{\Omega\cap (\rr\setminus\Omega)_\ve}|f|+|\nabla f|\,d\lambda_2
\end{eqnarray*}
By our choice of $\ve$, for $f\in E$ we have
$$
\|f-S_\ve f\|_{BV(\Omega)}<\delta\cdot\|f\|_{W^1_1(\Omega)}.
\eqno{(6)}
$$

Operator $S_\ve$ is almost what we need to establish BAP with one exception. Instead of $W^1_1(\Omega)$
its rank is contained in $BV(\Omega)$. To remedy that we use the result of \cite{W} which says that
$BV(\Omega)$ is contained in the second dual of $W^1_1(\Omega)$ and the variant of the principle of local reflexivity. Namely we have

\begin{lemma}\label{lrl}(cf. \cite{W}, Thm 4)
There is embedding $\delta: BV(\Omega)\to W^1_1(\Omega)^{\ast\ast}$ such that $\delta\circ\iota=\chi$
where $\iota: W^1_1(\Omega)\to BV(\Omega)$ is a natural embedding and $\chi: W^1_1(\Omega)\to W^1_1(\Omega)^{\ast\ast}$ is a canonical embedding.
\end{lemma}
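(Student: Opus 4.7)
The plan is to realize each $f\in BV(\Omega)$ as a weak-$\ast$ ultrafilter limit in $W^1_1(\Omega)^{\ast\ast}$ of canonical images of smooth $W^1_1(\Omega)$-approximations of $f$. First, I would fix an exhaustion $\Omega_n\uparrow\Omega$ by compactly contained open subsets, a subordinate partition of unity, and a family of mollifying kernels $\rho_n$, and define a sequence of bounded linear operators $A_n:BV(\Omega)\to W^1_1(\Omega)$ whose action on $f\in BV(\Omega)$ is the classical Meyers--Serrin / Anzellotti--Giaquinta smooth approximation. By that theorem, $A_nf\to f$ in $L^1(\Omega)$ and $|\nabla A_nf|(\Omega)\to|\nabla f|(\Omega)$, so $(\|A_nf\|_{W^1_1})_n$ is bounded by a multiple of $\|f\|_{BV(\Omega)}$, uniformly in $f$.

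Fixing a non-principal ultrafilter $\omega$ on $\mathbb N$, define $\delta:BV(\Omega)\to W^1_1(\Omega)^{\ast\ast}$ by
\[
\delta(f)(\phi) \;:=\; \lim_{n\to\omega}\phi(A_nf), \qquad f\in BV(\Omega),\ \phi\in W^1_1(\Omega)^{\ast}.
\]
The limit exists (the net $(\phi(A_nf))_n$ is bounded by $\|\phi\|\cdot\|A_nf\|_{W^1_1}$) and is linear in $\phi$, so $\delta(f)\in W^1_1(\Omega)^{\ast\ast}$. Linearity of $\delta$ in $f$ is inherited from linearity of $A_n$ and of ultrafilter limits, while $\|\delta(f)\|\lesssim\|f\|_{BV(\Omega)}$ is immediate. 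The compatibility $\delta\circ\iota=\chi$ is obtained once $A_n$ is arranged so that $A_nf\to f$ weakly in $W^1_1(\Omega)$ for every $f\in W^1_1(\Omega)$, since then $\lim_{n\to\omega}\phi(A_nf)=\phi(f)=\chi(f)(\phi)$ for every $\phi$. Finally, injectivity follows by testing $\delta(f)$ against the functionals $\phi_h(g)=\int_\Omega gh\,d\lambda_2$ for $h\in L^\infty(\Omega)$ (these lie in $W^1_1(\Omega)^{\ast}$): if $\delta(f)=0$ then $\int fh\,d\lambda_2=\lim_{n\to\omega}\int A_nf\cdot h\,d\lambda_2=0$ for every such $h$, forcing $f=0$ in $L^1(\Omega)$, which in turn forces $\nabla f=0$ as a distribution.

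The main obstacle is producing a single universal scheme $A_n$ which delivers the compatibility $\delta\circ\iota=\chi$. Strict BV-convergence of $A_nf$ to $f$ does not by itself imply weak $W^1_1$-convergence of the gradients: mollification can produce $L^1$-oscillation in $\nabla A_nf$ that is invisible to $L^1$-norms but visible to $L^\infty$-pairings. Forcing $\nabla A_nf\to\nabla f$ weakly in $L^1(\Omega,\mathbf R^2)$ whenever $\nabla f\in L^1(\Omega,\mathbf R^2)$ requires a careful, $f$-independent, choice of mollifier and partition-of-unity data adapted to the exhaustion $\Omega_n\uparrow\Omega$; this is the technical content of Theorem 4 in \cite{W}, which the lemma cites directly, and which may be viewed as a variant of the principle of local reflexivity for the pair $(W^1_1(\Omega),BV(\Omega))$.
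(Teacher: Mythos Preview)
The paper does not give its own proof of this lemma: it is stated with the attribution ``(cf.\ \cite{W}, Thm 4)'' and then immediately used, so there is nothing in the paper to compare your argument against. Your outline --- mollify, pass to an ultrafilter limit in the bidual, test against $L^\infty$ functionals for injectivity --- is the natural strategy and is essentially how such embeddings are built.

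That said, your diagnosis of where the difficulty lies is slightly off. You assert without justification that the Anzellotti--Giaquinta scheme furnishes \emph{linear} operators $A_n:BV(\Omega)\to W^1_1(\Omega)$ with $\sup_n\|A_n\|<\infty$. In the standard construction the mollification radii $\epsilon_{j,n}$ on each piece of the partition of unity are chosen depending on $f$, precisely so that the commutator terms $\sum_j\rho_{\epsilon_{j,n}}*(f\nabla\varphi_j)$ (which do \emph{not} cancel, since the radii differ across $j$) become small; with $f$-independent radii these terms are bounded only by $\sum_j\|\nabla\varphi_j\|_\infty\|f\|_{L^1(\mathrm{supp}\,\varphi_j)}$, and $\|\nabla\varphi_j\|_\infty\to\infty$ near $\partial\Omega$. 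Securing a uniform-in-$n$ operator bound on all of $BV(\Omega)$ for a genuinely linear scheme is the nontrivial step. Conversely, the issue you flag as the main obstacle --- weak $W^1_1$-convergence of $A_nf$ to $f$ for $f\in W^1_1(\Omega)$ --- is not one: once the scheme is a bona fide mollification with $\epsilon_{j,n}\to 0$, Meyers--Serrin gives \emph{strong} $W^1_1$-convergence whenever $\nabla f\in L^1$, so $\delta\circ\iota=\chi$ follows immediately. The ``$L^1$-oscillation'' phenomenon you describe is real, but it obstructs showing that $\delta$ is \emph{independent of the approximating sequence} (hence linear) when the $A_n$ are not linear to begin with; it does not obstruct compatibility on $W^1_1$ for a fixed linear scheme. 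In short: you have correctly located the need to defer to \cite{W}, but the content being deferred is the uniform linear bound, not the weak convergence.
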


Put $S_\ve': W^1_1(\Omega)\to BV(\Omega)$ by $S_\ve'=S_\ve+\Pi_E\circ\iota-S_\ve\circ\Pi_E\circ\iota$ where $\Pi_E: BV(\Omega)\to E$ is any projection on $E$ of norm less then $\dim E$. Obviously $S_\ve'f=f$ for $f\in E$ and, by (6),
$$
\|S_\ve'f\|=\|S_\ve f+(Id-S_\ve)\circ\Pi_E\circ\iota f\|\leq \|S_\ve f\|+\delta\|\Pi_E\circ \iota f\|\leq (\| S_\ve\|+\delta{\rm dim}E)\|f\|,
$$
and for each $E$ we can start from choosing $\delta=1/\rm{dim} E$, so that $\|S_\ve'\|\leq C(\eta)+1$.

Applying now Lemma \ref{lrl}, we obtain a finite dimensional space $\delta \circ S_\ve'(W^1_1(\Omega))\subset W_1^1(\Omega)^{**}$ and the principle of local reflexivity yields the existence a norm $2$ operator
$U_\ve:\delta\circ S_\ve'(W^1_1(\Omega))\to W^1_1(\Omega)$ such that $U_\ve f=f$ for $f\in E$. Hence $U_\ve\circ S_\ve'$ is a bounded finite rank operator whose restriction to $E$ coincide with identity, and a bound on its norm depends only from $\eta$ and $k$. This completes the proof of Theorem \ref{main}.

\section{Decomposition of $\Omega$}\label{decomposition}
\subsection{More definitions and notation}
For two points $a,b\in{\mathbf R}^2$ we will denote the closed interval with ends in $a$ and $b$ by $[a,b]$ (we allow $[a,b]$ to be a point if $a=b$). The open interval with ends in $a$ and $b$ we denote by $(a,b)$, and the half-open intervals by $[a,b)$ and $(a,b]$. The line passing through the points $a,b\in {\mathbf R}^2$ (where $a\neq b$) will be denoted by $\overline{ab}$. In both cases whenever the direction matters we suppose it go from $a$ to $b$.

We denote an open ball with center in $x$ and radius $r$ by $D(x,r)$.
The left half-plane of the two parts into which the line $\overline{ab}$ cuts $\mathbf{R}^2$ will be denoted by $L(\overline{ab})$. We denote the closure of $E$ in the relative topology of $\Omega$ as $\overline{E}^\omega=\overline{E}\cap\Omega$.

We will often not distinguish between a parameterized curve and its image on the plane when it does not lead to a confusion. When we consider a union of two (or more) curves we assume it is parameterized in the natural way according to the parameterizations of the curves in the union (the second curve starts where the first ends).

Through this paper we use the notion of polygon slightly  more general than usual. Given a finite collection of points $V=(v_k)_1^n$ we say that $V$ are vertices of a {\it polygon} $P$, if $P$ is a connected simply connected bounded open planar domain and the closed curve $\gamma_V=\bigcup\limits_{j=1}^n[v_j,v_{j+1}]$ is the boundary of $P$ (here and further we assume $v_{n+1}:=v_1$). We would say that $\gamma_V$ is counterclockwise (or positive) oriented boundary of $P$ if for $j=1,\ldots,n$ for every point $x$ of the interval $(v_j,v_{j+1})$ there exists $r>0$ such that $D(x,r)\cup L(\overline{v_jv_{j+1}})\subset P$.
The intervals $[v_j,v_{j+1}]$ will be called {\it sides} of the polygon $P$. Notice that we do not exclude the possibility that a side has length $0$ or that two or more of consecutive sides belong to the same line. We also do not expect the curve $\gamma_V$ to be simple, see for example the Figure 1, yet we often mean in the notations a universal covering of $P\cup\partial P$ in which the boundary is simple.


In our construction we will use a special type of polygons (the example on the Figure 1 happens to be of this special type). To introduce the type of polygons we fix a coordinate system, so that we can talk about vertical/horizontal direction.

\begin{definition}We denote as $P=P(v_1,\ldots,v_{2n})$ a polygon whose vertices $(v_j)_{j=1}^{2n}$ are numbered in the positive direction. We say that $P(v_1,\ldots, v_{2n})\in VH(\alpha,\beta)$ if the following properties are satisfied:
\begin{enumerate}
\item The sides $[v_{2j+1}, v_{2j+2}]$ are vertical;
\item The sides $[v_{2j}, v_{2j+1}]$ are horizontal;
\item Some sides may have length zero, i.e. $v_k=v_{k+1}$, but always $v_{k}\neq v_{k+2}$, i.e. there are no two or more adjoint sides of zero length.
\item All sides of $P(v_1,\ldots,v_{2n})$ have length bounded by $\alpha$;
\item For any $k$ the rectangle $Q([v_k,v_{k+1}],P):=P(v_k,v_{k+1},c,d)$ such that $|[v_{k+1}, c]|=\beta$ is a subset of $P$ (see Figure 2a). Notice that the vertices are numbered in the positive direction, so the rectangle is uniquely defined even if the boundary curve is not simple.
\item If the angle $\angle (v_{k+1} v_{k} v_{k-1})=180^\circ$ then $[q,v_k)\subset P$ where $\angle (v_{k+1}v_k q)=90^\circ$ and $d(v_k,q)=\beta$.
\item If the angle $\angle (v_{k+1} v_{k} v_{k-1})$ (measured in the positive direction, i.e. interior to $P$) is $270^\circ$, then
there exists a closed square $Q(v_k,P):=P(v_{k}, c, d, e)$ with vertical or horizontal sides of length $\beta$ which consists
only  of interior points of $P$, except for $v_{k}$ (see Figure 2b). Notice that if the angle at $v_k$ is $270^\circ$ only one quadrant has sides which do not contain the points of $\partial P$ close to $v_k$, thus the square is uniquely defined.
\item If the angle $\angle(v_{k+1} v_{k} v_{k-1})$ (measured in the positive direction, i.e. interior to $P$) is $360^\circ$, then there exists two adjoint closed squares $P(v_{k}, c, d, e)$ with vertical or horizontal sides of length $\beta$ consisting
only  of interior points of $P$, except for $v_{k}$ (see Figure 2c). Those squares are unique and their union is denote as $Q(v_k,P)$.
\end{enumerate}
\end{definition}

\subsection{Overview of the construction} The decomposition will be done in the following way.

Our construction depends on a fixed parameter $d=d(\ve)>0$ which can be chosen to fit $\ve$. We begin with a construction of a polygon $P_d\in VH(3d,\frac12d)$, which is "inscribed" in the domain $\Omega$ in the following sense.

\begin{proposition}\label{polygon} Given a domain $\Omega$ and a number $d>0$ there is a polygon $P_d\subset \Omega$, $P_d\in VH(3d,\frac12 d)$ such that every side of $P_d$ (including zero-length ones) contains a point of $\partial \Omega$.
\end{proposition}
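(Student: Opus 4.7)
I would build $P_d$ in three stages: a grid-aligned core polygon $\tilde{P}\subset\Omega$, a subdivision of its boundary to meet the length bound $\alpha = 3d$, and a final outward adjustment of each side so that it touches $\partial\Omega$.

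\textbf{Stages 1--2 (core polygon and subdivision).} Fix a Cartesian grid of side $d$, let $\mathcal{S}$ be the collection of closed grid squares $S\subset\Omega$, and set $U=\bigcup\mathcal{S}$. Pick a connected component $V$ of $U$; by simple connectedness of $\Omega$, every bounded component of $\mathbb{R}^2\setminus V$ lies in $\Omega$, since its boundary is a Jordan curve in $V\subset\Omega$ and such a curve must bound a topological disk in $\Omega$. Filling in all such components gives a simply connected polygonal set $\tilde{P}\subset\Omega$ whose boundary consists of horizontal and vertical grid segments, with a full $d\times d$ interior grid square adjacent to each length-$d$ boundary segment. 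Now subdivide each maximal straight boundary segment of $\tilde{P}$ into pieces of length $d$, inserting zero-length perpendicular sides between consecutive pieces so as to maintain the horizontal--vertical alternation demanded by the VH definition. The thickness conditions (v)--(viii) with $\beta = d/2$ hold automatically: each boundary piece has a full $d\times d$ interior square against it, and $270^\circ$ / $360^\circ$ corners have the required adjacent (or diagonal) $d\times d$ squares sitting inside. Hence $\tilde{P}\in VH(d,d/2)$, with every side of length $d$ or $0$.

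\textbf{Stage 3 (sliding to $\partial\Omega$).} For each side $\sigma$ of $\tilde{P}$ with $\sigma\cap\partial\Omega=\emptyset$, slide $\sigma$ perpendicularly outward (away from the interior of $\tilde P$) until it meets $\partial\Omega$. The slide distance is bounded by $d$: the $d\times d$ grid square adjacent to $\sigma$ on the outside is not in $\mathcal{S}$, hence fails to be contained in $\Omega$, so contains a point of $\partial\Omega$ within perpendicular distance $d$ of $\sigma$. A slide of amount $\delta\in[0,d]$ changes the length of each of the two adjacent perpendicular sides by at most $\delta\le d$, so every side stays of length at most $d+d+d=3d$, matching $\alpha=3d$. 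The interior thickness shrinks by at most $d/2$ from its original value of $d$, so the VH thickness $\beta=d/2$ is preserved.

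\textbf{Main obstacle.} The delicate step is Stage~3: the slide amounts of neighbouring sides have to be coordinated so that (a) the resulting polygon is still simply connected up to the $360^\circ$ pinches permitted by the VH definition, (b) the ``connector'' perpendicular side between two neighbours that are slid by different amounts, which now has positive length, itself contains a point of $\partial\Omega$ (after further subdivision if necessary), and (c) the corner conditions at $270^\circ$ and $360^\circ$ vertices survive the deformation. This reduces to a finite local case analysis at each vertex type, which I expect to occupy the bulk of the section.
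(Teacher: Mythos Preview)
Your Stages 1--2 match the paper's starting polygon $P_0$. The gap is Stage~3: a single simultaneous slide followed by local patching does not work.

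The immediate obstruction is already at $90^\circ$ corners: sliding both incident sides leaves an uncovered corner rectangle, and nothing forces that rectangle to lie in $\Omega$. Take $\Omega = (-\frac32, \frac12)\times(-\frac32, \frac12) \setminus \big(\{\frac13\}\times[\frac13, \frac12]\big)$ with $d=1$; then $\tilde P = [-1,0]\times[-1,0]$, the two sides meeting at $(0,0)$ each slide outward by $\frac12$ (the slit lies in neither swept strip), and the resulting corner rectangle $(0,\frac12)\times(0,\frac12)$ contains the slit, so the new polygon is not contained in $\Omega$. This forces you to slide only one axis-direction per round; but then the stretched perpendicular sides do not meet $\partial\Omega$ and you must iterate. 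In later rounds your bound ``slide distance $\le d$ because the adjacent exterior grid square is not in $\mathcal S$'' no longer applies---the sides are no longer grid edges---and the attached rectangles can be long. That they avoid the current polygon and one another is a \emph{global} fact, which the paper proves by a Jordan-curve argument (any such collision would enclose a region of $\Omega$ containing a side from the previous round, hence a point of $\partial\Omega$: contradiction). Termination is also non-trivial; the paper stops once every attached rectangle has area below $\frac14 d^2$, which must eventually happen since the $P_k$ increase inside the bounded $\Omega$, and only then does a final side-aggregation produce $P_d\in VH(3d,\frac12 d)$ with every side meeting $\partial\Omega$. So the ``main obstacle'' you flag is the whole proof, not a postscript.
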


Next, we cut the polygon $P_d$ into $\eta$-elementary pieces with disjoint interiors of diameter comparable to $d$ to get a family $\mathcal{A}_\ve$, i.e. $\overline{P_d}^\omega=\bigcup\limits_{A\in \mathcal{A}_\ve}\overline{A}^\omega$ (this part of the construction is descried in details in section \ref{sec:cutpolygon}).

Finally, we divide the reminder $\Omega\setminus \overline{P_d}^\omega$ to get the family $\bbb_\ve$ so that $\aaa_\ve$ and $\bbb_\ve$ fulfil the property (\ref{dec.dec}) of the Partition Theorem. In order to attain the later the collection $\bbb_\ve$ is obtained in the following way: $\Omega$-closure of every component of $\Omega\setminus \overline{P_d}^\omega$ has a non-empty intersection (an arc of $\partial P_d$) with $\Omega$-closure of only one set from $\aaa_\ve$ (as one will see from the construction of $\aaa_\ve$). An elements of $\bbb_\ve$ are then the union of the components of $\Omega\setminus \overline{P_d}^\omega$ which has $\Omega$-closure with non-empty intersection with the $\Omega$-closure of the same element of $\aaa_\ve$ - see the Figure 3. The properties  (\ref{dec.oneB}), (\ref{dec.oneA}), and the corresponding part of (\ref{dec.disjoint}) follow immediately from the way of constructing $\bbb_\ve$.


Observe that $\bigcup\limits_{B\in \bbb_\ve}B=\Omega\setminus \overline{P_d}^\omega$. We will show that the construction implies that $\bigcup\limits_{B\in\bbb_\ve}B\subset (\rr\setminus\Omega)_\ve$. This gives us the first part of the property (\ref{dec.approx}) for the partition.

The partition $\aaa_\ve$ of the polygon $P_d$ will be based on the following proposition and scaling.

\begin{proposition}\label{cut.polygon} There exist universal constants $\Delta,\delta,\eta >0 $, such that, given a polygon $P\in VH(3,\frac12)$, and a collection\footnote{Those points in our construction are $\partial P\cap \partial \Omega$} of points $\mathcal{P}\subset \partial P$, such that for every side of $P$ the set $[v_{k},v_{k+1}]\cap \mathcal{P}$ is non-empty, we can decompose the polygon as $\overline{P}^\omega=\cup \overline{A_m}^\omega$. Here  $\aaa=\{A_m\}$ is a collection of disjoint $\eta$-elementary sets of diameter less than $\Delta$ and greater than $\delta$. For every $A_m$ the set $\partial A_m\cap\partial P$ is either an arc of $\partial P$ connecting two points of $\mathcal{P}$ or empty.
\end{proposition}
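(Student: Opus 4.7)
The plan is to build the partition in three layers that reflect the $VH(3,1/2)$ structure of $P$: a rectangular ``collar'' of boundary strips along each side of $P$, corner pieces at each vertex, and a coarse-grid partition of the remaining deep interior. The universal constants $\delta,\Delta,\eta$ will emerge as absolute numbers determined by a small scale parameter $\sigma\in(0,1/4)$ together with the inner-width $1/2$ supplied by the $VH$ hypothesis.

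First, for each side $[v_k,v_{k+1}]$ I would select a maximal $\sigma$-separated subfamily $\mathcal{C}_k\subset\mathcal{P}\cap[v_k,v_{k+1}]$, ordered along the side. By maximality any two consecutive points of $\mathcal{C}_k$ are at distance at most $3\sigma$ (apart from end-effects near the vertices, which are absorbed by the corner construction). Inside the inner rectangle $R_k=Q([v_k,v_{k+1}],P)$ supplied by the $VH$-structure I would drop a perpendicular segment of length $1/2$ from each point of $\mathcal{C}_k$. This slices $R_k$ into axis-parallel sub-rectangles of dimensions roughly $w\times 1/2$ with $w\in[\sigma,3\sigma]$; each such piece is $\eta$-elementary with a universal constant, has diameter in a universal range, and has its $\partial P$-side equal to an arc of $\partial P$ connecting two points of $\mathcal{C}_k\subset\mathcal{P}$.

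At each vertex I would use the exact geometry guaranteed by the $VH$-definition to supply a corner piece. At a $90^\circ$ convex vertex the $1/2\times 1/2$ overlap square of $R_{k-1}$ and $R_k$ is reserved as a single corner piece whose $\partial P$-arc is L-shaped and joins two points of $\mathcal{P}$, one on each adjacent side. At a $270^\circ$ reflex vertex I would take $Q(v_k,P)$ itself as the corner piece, and at a $360^\circ$ slit vertex the two squares of $Q(v_k,P)$ become two separate corner pieces, one on each side of the slit. At a degenerate $180^\circ$ vertex the perpendicular segment of length $1/2$ supplied by item~(6) of the $VH$-definition serves as one of the slicing segments and no special corner piece is required. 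In each case the resulting corner piece is a square, rectangle, or L-shape of universally bounded aspect ratio, hence $\eta$-elementary with a universal constant, and its $\partial P$-boundary joins two points of $\mathcal{P}$. Once all collar and corner pieces are removed, what remains is an open set whose boundary is a union of axis-parallel segments lying at distance exactly $1/2$ from $\partial P$; I would cover this deep interior by a square grid of mesh $\sim 1/2$, keeping the grid cells contained in the remainder and absorbing any fractional cells into the adjacent collar or corner piece they border.

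The main obstacle will be reconciling two competing constraints: the pieces must be $\eta$-elementary, so their aspect ratio must be controlled, yet every cut appearing on $\partial P$ must occur at a point of $\mathcal{P}$, and $\mathcal{P}$ may cluster arbitrarily finely on any side. The resolution is that the statement only requires the \emph{endpoints} of each boundary arc to lie in $\mathcal{P}$, so intermediate marked points may be ignored and the $\sigma$-separated subfamily $\mathcal{C}_k$ suffices. A secondary subtlety is ensuring that $\mathcal{C}_k$ reaches close enough to each vertex that the corner pieces have their $\partial P$-arc endpoints in $\mathcal{P}$; when the nearest marked point to a vertex is not close, the corresponding corner piece's $\partial P$-arc simply extends further along the side, which is geometrically feasible because the $1/2$-margin of the $VH$-definition guarantees enough room inside $P$ to attach the corresponding inner side of the piece without leaving $P$.
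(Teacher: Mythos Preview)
Your approach has a real gap: the collar rectangles $R_k=Q([v_k,v_{k+1}],P)$ attached to non-adjacent sides can overlap. The $VH(3,\frac12)$ hypothesis only says that each $R_k$ lies in $P$; it says nothing about their mutual disjointness. Concretely, a rectangle $[0,L]\times[0,0.6]$ (with the long sides subdivided so that no side exceeds length $3$) is in $VH(3,\frac12)$, and the depth-$\frac12$ rectangles attached to a bottom segment and to an opposite top segment overlap in a horizontal strip of height $0.4$. Your sliced sub-rectangles coming from the two sides then occupy the same region, so the pieces are not disjoint; equally, your claim that the leftover ``deep interior'' has boundary at distance exactly $\frac12$ from $\partial P$ fails here, since nothing is left over yet the overlap is double-counted. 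Shrinking the collar depth to $\frac14$ removes the overlap (opposite sides in a $VH(\cdot,\frac12)$ polygon are at distance at least $\frac12$), but then the deep interior can be an arbitrarily thin horizontal sliver, which no grid of fixed mesh can cover while respecting a uniform lower bound $\delta$ on diameters, and which cannot be absorbed into a neighbouring collar piece without destroying the uniform $\eta$-elementary bound.

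The paper avoids this by replacing the union of rectangle tops with the level curve $S_\epsilon=\{x\in P:d(x,\partial P)=\epsilon\}$ for a small $\epsilon$ (in fact $\epsilon=\frac{1}{32}$). Most of the proof consists of a sequence of lemmas showing that $S_\epsilon$ is a simple, rectifiable, locally Lipschitz closed curve --- this is not automatic, since level sets of a distance function can in principle have branch points and isolated components, and the argument uses the full $VH(\cdot,\frac12)$ structure. Once this is known, the thin annulus between $\partial P$ and $S_\epsilon$ really is a topological annulus, and the paper cuts it by short ``regular'' segments running from points of $\mathcal P$ to $S_\epsilon$; the region inside $S_{2\epsilon}$ is then tiled by a \emph{fine} dyadic grid of mesh $\epsilon/16$, and the remaining intermediate annulus is cut similarly. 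Your vertex bookkeeping for the $90^\circ/270^\circ/360^\circ$ corners is in the right spirit, but the object you are missing is this inner curve $S_\epsilon$ and its regularity; without it the collar is not well-defined and the interior grid cannot have a fixed scale.
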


Now, the properties (\ref{dec.Aclosed}), (\ref{dec.diam}) and the remaining part of the property (\ref{dec.disjoint}) of the Partition Theorem follow immediately. The property (\ref{dec.fewA}) follows to the fact that all the sets in $\aaa$ are $\eta$-elementary and have comparable sizes. We explain this implication in the following lemma.

\begin{lemma} For any $\Delta>\delta>0$ and $\eta>0$ there exists $k=k(\eta,\delta,\Delta)$, such that if $\{A_n\}$ is an arbitrary collection of disjoint open $\eta$-elementary sets for which $\delta<diam(A_n)<\Delta$, then the closure of any set in $\{A_n\}$ intersects the closures of no more than $k$ sets from the collection.
\end{lemma}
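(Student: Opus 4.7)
The plan is a simple volume-packing argument. Fix $A\in\{A_n\}$ and let $A_1,\ldots,A_N$ enumerate those other members of $\{A_n\}$ with $\overline{A_i}\cap\overline{A}\neq\emptyset$. For any fixed $x_0\in\overline{A}$, each $\overline{A_i}$ meets $\overline{A}$ and has diameter at most $\Delta$, so $A_i\subset D(x_0,2\Delta)$; hence the pairwise disjoint family $\{A,A_1,\ldots,A_N\}$ sits inside a disk of area $4\pi\Delta^2$. The bound on $N$ therefore reduces to a uniform lower bound on the area of any $\eta$-elementary set of diameter at least $\delta$.

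The required estimate is that every $\eta$-elementary set $A'$ satisfies $\lambda_2(A')\geq c(\eta)\,\mathrm{diam}(A')^2$ for some $c(\eta)>0$. This is already implicit in the tools used in Lemma \ref{L:traceestimate}: by the Pommerenke extension (theorems 7.9, 7.10 of \cite{Pom}), after rescaling $A'$ to unit diameter it is the image of the unit disk $D$ under an $M(\eta)$-bi-Lipschitz self-map of the plane, whence $\lambda_2(A')\geq \lambda_2(D)/M(\eta)^2 = \pi/M(\eta)^2$ in the normalised case and $\lambda_2(A')\geq (\pi/M(\eta)^2)\,\mathrm{diam}(A')^2$ in general. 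Combined with the packing inclusion above,
$$
(N+1)\,\frac{\pi}{M(\eta)^2}\,\delta^2 \leq 4\pi\Delta^2,
$$
so we may take $k(\eta,\delta,\Delta):=\lfloor 4\,M(\eta)^2(\Delta/\delta)^2\rfloor$.

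The only non-trivial point is the area lower bound. With the bi-Lipschitz parameterisation at hand it is an immediate Jacobian computation. A more self-contained alternative would slice $A'$ by lines perpendicular to its diameter and use the $\eta$-elementary condition to show that each slice has length at least $c(\eta)\,\mathrm{diam}(A')$, then integrate; this requires a short topological case analysis on which of the two arcs of $\partial A'$ joining the endpoints of a maximal slice-segment contains the extremal boundary points, and is somewhat longer. In either case, the remainder of the proof reduces to the short packing count above.
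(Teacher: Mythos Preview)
Your argument is correct and follows essentially the same packing approach as the paper: contain all neighbouring sets in a disk of radius $2\Delta$ and divide its area by a uniform lower bound $c(\eta)\delta^2$ on the area of each $\eta$-elementary set. The paper simply asserts the area bound $\lambda_2(A')\geq c(\eta)\,\mathrm{diam}(A')^2$ without proof, whereas you justify it via the bi-Lipschitz parametrisation from \cite{Pom} already invoked in Lemma~\ref{L:traceestimate}; this is a reasonable addition.
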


\begin{proof} There exists a constant $c=c(\eta)$ such that area of an $\eta$-elementary set $A$ is at least $c \cdot (diam(A))^2$. Consider a point $p\in A_n$. Then every set $A_m$, such that $\overline{A_n}\cap\overline{A_m}\neq \emptyset$, is contained in a disc with centra in $p$ and radius $2\Delta$. At the same time the sets $\{A_n\}$ are disjoint and each has the area at least $c\delta^2$. This means that the number of sets neighboring $A_n$ does not exceed $k=\frac{4\pi\Delta^2}{c\delta^2}$.
\end{proof}

It remains to observe that, as diameters of the sets of $\aaa_\ve$ are at most $\Delta d$ and $\partial A_B\cap \partial P_d\neq \emptyset$ for $B\in\bbb_\ve$, we have $\bigcup\limits_{B\in \bbb_\ve}A_B\subset (\partial P_d+D(0,\Delta d))\subset (\mathcal{P}+D(0,(\Delta+3)d)\subset (\partial\Omega+D(0,(\Delta+3)d)\subset (\rr\setminus \Omega)_\ve$ when $d<\ve/(\Delta+3)$, which gives the remaining part of the property (viii).

Thus, Theorem \ref{partition} will be established as long as we prove
Propositions \ref{polygon} and \ref{cut.polygon}.
Their proofs constitute the rest of the paper.

\subsection{Proof of the proposition \ref{polygon}}

We begin by constructing a (finite) sequence of increasing by inclusion $VH(d,\frac12 d)$-polygons $(P_k^d)_{k=1}^N$, $P_k^d\subset\Omega$. Then we will build a $VH(3d,\frac12d)$-polygon $P_d$ with the required properties such that $P_N^d\subset P_d\subset \Omega$.

Fix $x\in \Omega$. Consider the lattice of  squares
(in a fixed coordinate system) with the side-length $d$. We denote by $\Omega_d$ the union of all closed lattice squares contained in $\Omega$. Assuming that the given $d$ is small enough (less than $\frac1{\sqrt{2}}dist(x,\partial\Omega)$),
we have $x\in (\Omega_d)^\circ$.  The polygon
$P_0^d=\Omega_d(x)$ is the connected component of
$(\Omega_d)^\circ$ which contains the point $x$. It is easy to see that
$P_d\supset\Omega_d(x)$ and $\cup \Omega_{2^{-k}}(x)=\Omega$. Thus, $\cup P_{2^{-k}}$ is an open covering of $\Omega$ with $P_{2^{-k}}\subset P_{2^{-(k+1)}}$, so that for $k$ large enough $\Omega\setminus P_{2^{-k}}\subset (\rr\setminus\Omega)_\ve$, which gives us the first part of the property (viii).



The rest of the construction can be re-scaled, so we may assume from now on that $d=1$ and omit this index in our notations.

Directly from the definition it follows that $P_0$ is a
connected polygon with the sides parallel to the axis, whose vertices have integer coordinates.
It is a straightforward observation that $P_0$ has to be simply connected and its boundary is a simple curve (see Figure 4).

Adding if necessary some zero length sides at the
integer-coordinates points of $\partial P_0$, we can assume that $P_0\in VH(1,1)$ and the length of each side is $1$ or zero.


The idea of the construction of the sequence $(P_n)$ is that we increase the polygon by "moving its sides out" as long as possible (see Figure 4). This will be done in a sequence of 3 step-procedures, which slightly differs when we build $P_1$ from $P_0$, and when we build further $P_{k+1}$ from $P_k$, $k\geq 1$. To make the first step we need the following.


\begin{lemma}\label{rectangleonaside} If $P=P(v_1,\ldots, v_{2n})\in VH(\alpha,\beta)$ and $P\subset\Omega$ then for
any $j$ for which $v_j\neq v_{j+1}$, there
exists unique pair of points $p,q\in\rr$ such that $R=P(v_{j+1},v_j,p,q)$ is an open rectangle, $[p,q]$ contains a point of $\partial\Omega$, and $\overline{R}\setminus [p,q]\subset\Omega$.
\end{lemma}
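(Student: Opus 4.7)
My approach is to extend a rectangle perpendicularly outward from the edge $[v_j,v_{j+1}]$ until it first touches $\partial\Omega$, taking the supremum depth at which the closed rectangle still lies entirely in $\Omega$.

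Let $e$ be the unit vector perpendicular to $[v_j,v_{j+1}]$ pointing into the half-plane on the opposite side of $[v_j,v_{j+1}]$ from the interior of $P$; the reversed ordering $v_{j+1},v_j,p,q$ of the vertices of $R$ forces $R$ to lie in this outward half-plane. For $t\geq 0$, let $R_t$ denote the open rectangle with vertices $v_{j+1},v_j,v_j+te,v_{j+1}+te$, and set
$$
T=\sup\{t\geq 0:\overline{R_t}\subset\Omega\}.
$$
In the context where the lemma is applied, the side $[v_j,v_{j+1}]$ lies in $\Omega$, so by compactness of the segment and openness of $\Omega$ a small tubular neighborhood of $[v_j,v_{j+1}]$ is contained in $\Omega$, giving $T>0$; boundedness of $\Omega$ gives $T<\infty$.

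Next I verify that $\overline{R_T}\not\subset\Omega$: otherwise $\overline{R_T}$ would be a compact subset of the open set $\Omega$, admitting a $\delta$-neighborhood still contained in $\Omega$, contradicting the maximality of $T$. Hence there exists $x\in\overline{R_T}\setminus\Omega$. Put $p=v_j+Te$ and $q=v_{j+1}+Te$. I claim $x\in[p,q]$: any $y\in\overline{R_T}\setminus[p,q]$ has "depth" $s<T$ along the $e$-direction, so $y\in\overline{R_{s'}}\subset\Omega$ for any $s'\in(s,T)$ by monotonicity in $t$ and the definition of $T$, forcing $y\in\Omega$. Conversely, each point $y\in[p,q]$ is the limit of points $y-(1/n)e\in\overline{R_{T-1/n}}\subset\Omega$, so $[p,q]\subset\overline{\Omega}$. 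Combining, $x\in[p,q]\cap(\overline{\Omega}\setminus\Omega)\subset[p,q]\cap\partial\Omega$, which yields the required boundary contact. Uniqueness of the pair $(p,q)$ is immediate from the uniqueness of the supremum $T$.

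The bookkeeping is routine; the only real subtlety is ensuring that the "first failure" of $\overline{R_t}\subset\Omega$ occurs on the far edge $[p,q]$ rather than on the two lateral edges $[v_j,p]$ and $[v_{j+1},q]$ or in the interior. This is precisely what the monotonicity step above delivers: any non-far point of $\overline{R_T}$ has strictly smaller depth than $T$ and hence lies inside some $\overline{R_{s'}}$ already contained in $\Omega$.
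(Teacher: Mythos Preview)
Your proof is correct and follows essentially the same approach as the paper: both construct the maximal outward rectangle whose closure lies in $\Omega$ and then argue that the far side must meet $\partial\Omega$. The paper phrases this as the union of all such rectangles, you phrase it as the supremum of depths $T$; these are equivalent. One small difference: the paper explicitly disposes of the degenerate case where $[v_j,v_{j+1}]$ already meets $\partial\Omega$ (taking $p=v_j$, $q=v_{j+1}$, i.e.\ $R=\emptyset$), whereas you sidestep it by appealing to the context of application to get $T>0$; since $P\subset\Omega$ only gives $[v_j,v_{j+1}]\subset\overline\Omega$, the degenerate case can genuinely occur in later iterations of the construction, so it is cleaner to treat it as the paper does.
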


\begin{proof}As $P\subset\Omega$, $[v_j,v_j+1]\subset\overline{P}\subset\Omega\cup\partial\Omega$. If $[v_j,v_{j+1}]$ contains a point of $\partial \Omega$, we can take $p=v_j, q=v_{j+1}$. Therefore  we may assume that $[v_j,v_{j+1}]\subset \Omega$.

Consider the union $R=\cup P(v_{j+1},v_j,\xi,\zeta)$ of all open
rectangles with a side $[v_{j+1}, v_j]$ (in that direction), such that $ \overline{P(v_{j+1},v_j,\xi,\zeta)}\subset \Omega$. Since we keep  fixed the orientation all the rectangles are on the
"outer" side of the polygon $P$ (but they may intersect $P$ if the length of the side
$[v_j, \xi]$ is big enough).

As $\Omega$ is bounded, so is $R$, as $R\subset \Omega$. As an increasing union of open rectangles with a common side, the union $R$ is an open rectangle, and we may say $R=P(v_{j+1},v_j,p,q)$ (if $R=\emptyset$ then $p=v_j, q=v_{j+1}$).

On one hand, $\overline{R}\setminus[p,q]\subset \cup\overline{P(v_{j+1},v_j,\xi,\zeta)}\subset \Omega$ where the union is taken over the same collection of rectangles which defines $R$.

On the other hand, if $[p,q]\cap\partial\Omega=\emptyset$, then the distance from $[p,q]$ to $\partial \Omega$ is strictly positive, and thus there exists $\xi',\zeta'$, such that $P(v_{j+1},v_j,\xi',\zeta')\supsetneq R$ and $\overline{P(v_{j+1},v_j,\xi',\zeta')}\subset \Omega$, which contradicts the definition of $R$.
\end{proof}

Notice that the rectangle given by the Lemma \ref{rectangleonaside} is unique.

{\bf Notation.} The side $[v_{j},v_{j+1}]$ of the rectangle $R$ provided by the Lemma \ref{rectangleonaside} will be called its {\it base} and the opposite side $[p,q]$ - its {\it top}.

\subsubsection{Construction of $P_1$ starting from $P_0$.}

Clearly all sides of $P_0$ are contained in $\Omega$.

We first construct $\tilde{P_0}$ by adding to $P_0$ all rectangles provided by Lemma \ref{rectangleonaside} which are build on its non-zero horizontal sides together with the corresponding  bases without(!) the bases' endpoints. This means that we increase amount of the vertices of $P_0$ (see Figure 5).

We claim that $\tilde{P}_0$ is a polygon, i.e. connected and simply connected, and that $\tilde{P_0}\in VH(1,1)$.

To show this we observe that none of the added rectangles intersect neither $P_0$ nor any other
added rectangle. Indeed, if the height of any added rectangle $R$ would exceed $1$, then $R$ contains a lattice square $Q$ with the same base and such that $\overline{Q}\subset\Omega$. But then $Q\subset \Omega_d(x)$, and its base can not be a side of $P_0$. But $R$ can not intersect $P_0$ (which consists of the lattice squares) unless its height is greater than $1$.

On the other hand, a pair of added rectangles $R_1$ and $R_2$ can intersect each other only if their bases are the opposite (horizontal) sides of a lattice square, and each of them contains the top of the other (see Figure 6). But the top of the rectangle $R_1$ contains points of $\partial \Omega$, and thus can not be part of $\overline{R_2}$, unless it coincides with its top. But, if the closures of the rectangles intersect only at the tops, the rectangles themselves, being open, do not intersect each other.


When we add a rectangle and the side to which it is attached to the polygon, the union is obviously connected. As the union is homotopic to the initial polygon, it remains simply connected. As the sides of $\tilde{P}_0$ are either the sides of $P_0$ or the sides of the added rectangles, non of the sides of $\tilde{P}_0$ is longer than $1$. It is also plain to see that adding a rectangle of the width $1$ to a horizontal side of $VH(.,1)$-polygon gives us again a $VH(.,1)$-polygon. Thus, the resulting polygon $\tilde{P}_0$ is a $VH(1,1)$-polygon with additional property that each of its non-zero horizontal side contains a point from $\partial\Omega$.


In the next step we consider the zero-length horizontal sides of $\tilde{P}_0$. They can be of two types: those with corresponding interior angle of the polygon $\tilde{P}_0$ equal $180^\circ$ (we let them be), and those with corresponding interior angle of $\tilde{P}_0$ equal $360^\circ$. We deal with the later case at this (second) step of our construction.

If the angle at a zero length horizontal side $[v_j,v_{j+1}]$ is $360^\circ$, this means that one of the adjoint sides $[v_{j-1},v_j]$ and $[v_{j+1},v_{j+2}]$ is contained in the other one. W.l.o.g. we assume $[v_{j-1},v_j]\subset [v_{j+1},v_{j+2}]$.

There are two possibilities:
\begin{enumerate}
  \item $\partial\Omega\cap (v_{j-1},v_j]=\emptyset$; in this case we remove $v_j=v_{j+1}$ from the sequence of the vertices of the polygon, thus adding to the polygon the half-open interval $(v_{j-1},v_j]$;
  \item $\partial\Omega\cap (v_{j-1},v_j]\neq\emptyset$; in this case we choose $p\in \partial\Omega\cap (v_{j-1},v_j]$ which is the most close to $v_j$ and replace both vertices $v_j=v_{j+1}$ by $p$, thus adding to the polygon the half-open interval $(p,v_j]$.
\end{enumerate}

It is easy to see that non of the two operations changes the connectedness or homotopy class of the polygon. Furthermore, if a polygon is a $VH(1,1)$-polygon it clearly remains a $VH(1,1)$ polygon after any of this two operation.

We perform the operation described above on every zero-length horizontal side of $\tilde{P}_0$ with the interior angle $360^\circ$ which doesn't contain a point of $\partial\Omega$. In the end of this step we obtain a polygon which we call $P^*_1$.

Our aim is to construct a sequence of polygons $(P_k)_{k=1}^N$, such that $P_k\in VH(1,\frac12)$, and for every odd $k$ each horizontal side should contain a point of $\partial \Omega$, except for those of zero-length at which the interior angle of $P_k$ is $180^\circ$ (the later may or may not contain points of $\partial\Omega$). Also every vertical side of length less than $\frac12$ should be adjoint to an angle of at least $270^\circ$ (we call this additional conditions $*$-condition). For an even $k$ the same additional conditions should be satisfied with horizontal replaces by vertical and vice versa.It is the second part of the $*$-condition that $P^*_1$ may not satisfy.

To obtain the polygon $P_1$ it remains to take care of the short vertical sides. As $P^*_1\in VH(1,1)\subset VH(1,\frac12)$, if a side $[v_j, v_{j+1}]$ is shorter than $\frac12$ the angles at $v_j$ and $v_{j+1}$ can not both be $90^\circ$. If the one of them is $270^\circ$ or greater, then the side poses no problem. Consider the last remaining case when one of the angles is $180^\circ$. W.l.o.g. we may assume that it is the angle at $v_j$ which is $180^\circ$. This means that the side is adjoint to a horizontal side of zero-length, i.e. $v_j=v_{j-1}$. In this case we remove both vertices $v_j$ and $v_{j-1}$ from the sequence of the vertices so that the side $[v_j,v_{j+1}]$ merges with the previous vertical side. Repeating this operation as many times as needed, we get a $VH(.,\frac12)$ - polygon which satisfies the rest of the conditions, but may have some vertical sides of length greater than $1$. We remedy this last problem by adding some zero-length horizontal sides to subdivide the longer sides in pieces of length greater than $\frac12$ but not greater than $1$, and thus obtain the polygon $P_1$.

\subsubsection{Construction of $P_{k+1}$}

Assume now that we have constructed $P_k\in VH(1,\frac12)$, which  satisfies $*$-condition. W.l.o.g. we may assume that $k$ is odd, so that every horizontal side of $P_k$ (except for some zero-length with the adjoint angle of $180^\circ$) contains a point of $\partial\Omega$, and every vertical side of length less than $\frac12$ is adjoint to at least one angle of $270^\circ$ or greater.

Similarly to the construction of $P_0$ we first build $\tilde{P}_k$ by attaching to every non-zero vertical side the rectangle provided by the Lemma \ref{rectangleonaside}. Notice, that one of the sides of the added rectangle (its base) serves to merge the rectangle to $P_k$, while the remaining three become sides of $\tilde{P}_k$.

As before, we claim that none of the added (open) rectangles intersects $P_k$ nor any other added rectangle

However, the argument used to prove the disjointness in the case of $P_0$ does not work now.
Instead we argue as follows.


Suppose that a non-empty open rectangle $R_1$ attached to the vertical side $(v_j,v_{j+1})$ intersects $P_k$ (see Figure 7).

Consider first the case when $dist((v_j,v_{j+1}),R_1\cap P_k)>0$. $R_1\cap \partial P_k$ is nonempty as $R_1$ contains both points of $\Omega\setminus P_k $ and $P_k$). Let $q$ is a point of $R_1\cap \partial P_k$ which is one of the most close to $(v_j,v_{j+1})$. Such a point exists as $\partial P_k$ consists of finitely many horizontal and vertical line segments. The point $q$ belongs to a vertical side of $P_k$ (a non-zero one, as otherwise the angle at $q$ would be $0^\circ$). As $R_1$ is open we may choose $q$ to be not a vertex, and thus being an internal point of some vertical side. Let $p$ is an arbitrary point of $(v_j,v_{j+1})$.

As there are no points in $R_1\cap \overline{P_k}$ which are more close to $(v_j,v_{j+1})$ than $q$, the segment $(p,q)$ is entirely contained in $\Omega\setminus \overline{P_k}$. We may consider the curve $(p,q)=\gamma(p,q)=\gamma$. On the other hand, as the open polygon $P_k$ is connected there exists a simple curve $\gamma^*=\gamma^*(p,q)$ which connects $p$ and $q$ inside $P_k$. As $\gamma\cap\gamma^*=\{p,q\}$ the curve $\gamma\cup\gamma^*$ is a closed simple curve.

By Jordan's theorem the closed simple curve $\gamma\cup\gamma^*$ divides $\rr$ in two open domains. One of them is bounded and we denote it $I(\gamma,\gamma^*)$. As $\gamma\cup\gamma^*=\partial I(\gamma, \gamma^*)\subset \Omega$ and $\Omega$ is a simply connected domain, we know $I(\gamma,\gamma^*)\subset \Omega$.

As $\gamma\setminus\{p,q\}\subset \Omega\setminus \overline{P_k}$ and $\gamma^*\setminus\{p,q\}\subset P_k$ we have $\partial P_k\cap I(\gamma,\gamma^*)\neq \emptyset$. But $\partial P_k$ is a curve consisting of vertical and horizontal sides of $P_k$. As $\partial P_k\cap \partial I(\gamma,\gamma^*)=\{p,q\}$ where $p$ and $q$ are internal points of vertical sides, every horizontal side of $P_k$ which intersects $I(\gamma,\gamma^*)$ is contained in it. This means that $\partial P_k\cap I(\gamma,\gamma^*)$ is a curve which contains a non-zero horizontal side of $P_k$. But then by the $*$-condition the horizontal side contains a point of $\partial \Omega$, i.e. $\partial \Omega\cup I(\gamma,\gamma^*)\neq \emptyset$. This is a contradiction to $I(\gamma,\gamma^*)\subset\Omega$.

In the case when $d((v_j,v_{j+1}),R_1\cap P_k)=0$, the rectangle $P_k$ having only finitely many sides, this means that there exists another side of $P_k$, say $(v_m,v_{m+1})$, such that $(v_j,v_{j+1})\cap (v_m,v_{m+1})$ is non-empty. If we can take $p\in (v_j,v_{j+1})\cap (v_m,v_{m+1})\cap \Omega$, then we may choose a simple closed curve $\gamma\subset P_k\cup \{p\}$, which contains a small horizontal segment with centrum in $p$. Again, by Jordan's theorem we can consider a bounded open domain $I(\gamma)$, $\partial I(\gamma)=\gamma$, $I(\gamma)\subset \Omega$. As the domain contains part of $(v_j,v_{j+1})$ it contains part of the (not necessary simple) closed curve $\partial P_k$. Assuming the natural parametrization $\phi$ of $\partial P_k$ by $\mathbb{T}$ we see that $\phi^{-1}(p)$ consists of two points which divide $\mathbb{T}$ in two arcs of which one (say $\tau$) parameterizes $I(\gamma)\cap \partial P_k$. It is easy to see that $\phi(\tau)$ contains at least one horizontal side which is either non-zero, or is adjoint to the interior angle of $360^\circ$. Again, by the $*$-condition, this implies $I(\gamma)\cap \partial \Omega\neq \emptyset$ which contradiction proves that $(v_j,v_{j+1})\cap (v_m,v_{m+1})\subset \partial \Omega$. But then the rectangle added to the side $(v_j,v_{j+1})$ should be empty (and so does not have non empty intersection with $P_k$).

Thus, we have shown that non of the added at this step rectangles can intersect $P_k$.

Let, now a rectangle $R_1$ added to a side $(v_j,v_{j+1})$ intersects a rectangle $R_2$ added to a side $(v_m,v_{m+1})$. Choose $p\in (v_j,v_{j+1}), q\in (v_m,v_{m+1})$ and $r\in R_1\cap R_2$. As we have shown above $R_1,R_2\in \Omega\setminus P_k$, so the curve $\gamma=\gamma(p,q)=[p,r]\cup [r,q]$ satisfies $\gamma\cap (\Omega\setminus \overline{P_k})=\{p,q\}$. We can choose $\gamma^*=\gamma^*(p,q)$ to be a simple curve connecting $p$ and $q$ inside $P_k$ and the further arguments works exactly as the one in the first case above.

We have, thus, proven that non of the added to $P_k$ rectangles intersects $P_k$ or each other.

Connectedness and simply connectedness of the resulting polygon $\tilde{P}_k$ follows in the same way as in the case of $P_0$.

Now we verify that $\tilde{P}_k\in VH(.,\frac12)$. Indeed, if we have added a rectangle to a side of length at least $\frac12$, the property (v) of being $VH(.,\frac12)$ is preserved automatically. When we add a rectangle to a side of length less than $\frac12$ one of the adjoint angles has to be at least $270^\circ$ (and as the side is non-zero it has to be exactly $270^\circ$). In this case one of the horizontal sides of the added rectangle is a subset of the adjoint horizontal side of the polygon $P_k$ (the side of the rectangle can not be longer than the adjoint side of $P_k$, as the later contain a point of $\partial \Omega$ while the former does not). So, modifying our previous strategy, we add the half-open horizontal side of the rectangle which is a part of the horizontal side of $P_k$ along with the rectangle (see Figure 8). This modification of the construction does not change the connectedness or simply connectedness of $\tilde{P}_k$ but provides $\tilde{P}_k$ with $VH(.,\frac12)$ property.

When $\tilde{P}_k$ is constructed we proceed in exactly the same manner as in the construction of $P_1$ to obtain $P^*_{k+1}$ and finally $P_{k+1}$.

We proceed to construct the sequence of the polygons $P_k$, till the first
moment at which the area of every
added rectangle is less than $\frac14$. Such step will eventually
come, since otherwise the polygons $P_k$ have areas
increasing at each step by at least $\frac14$ still being subsets of $\Omega$, which would contradict the boundedness of $\Omega$.

\subsubsection{Construction of the polygon $P$}

Let us consider $P_n$, the polygon we obtained in the step on which the iterative process
described above stopped. We already know that $P_n\in VH(1,\frac12)$ is connected and simply connected polygon.
W.l.o.g. we may assume that the construction stopped at an odd step, so we know that each horizontal side (apart from some of zero length which are adjoined to an angle of $180^\circ$) contains a point of $\partial\Omega$. This means that we luck points of $\partial \Omega$ only on vertical sides, or zero-horizontal sides adjoined to an angle of $180^\circ$.

Consider the rectangles added in the last step of our construction. Non of them has area greater than $\frac14$, which means that all the rectangles added to the horizontal side of length at least $\frac12$ has the vertical sides no longer than $\frac12$. At the same time the rectangles added to the sides of length less than $\frac12$ are no longer than the adjoint by an angle of $270^\circ$ vertical side of $P_{n-1}$ due to the $*$-condition, i.e. are not longer than $1$.

Let $[v_k,v_l]=\bigcup_{j=k}^{l-1}[v_j,v_{j+1}]$ be the union of any maximal chain of consecutive sides of $P_n$ contained in a single vertical segment. This means that
$[v_{k-1},v_k]$ and $[v_l,v_{l+1}]$ are horizontal sides of $P_n$ of non-zero length and all horizontal sides in the union are of zero length. All vertical side in the union,  except possibly $[v_{k},v_{k+1}]$ and $[v_{l-1},v_l]$ which could be vertical sides of the added on the last step rectangles, contain points of $\partial\Omega$, say $q_j\in[v_j,v_{j+1}]\cap \partial\Omega$ for $j\equiv k \mbox{ mod } 2$, $k<j<l-1$. Then we transform the collection of sides of the polygon $P_n$ replacing its sides $[v_k,v_{k+1}],\dots,[v_{l-1},v_l]$
by the new sides
$$
[v_k,q_{k+2}],\quad[q_{k+2},q_{k+2}],\quad [q_{k+2},q_{k+4}], \quad\dots,
\quad[q_{l-5},q_{l-3}],\quad [q_{l-3},q_{l-3}],\quad [q_{l-3},v_l]
$$
(if $l=k+3$ then we make just one new side $[v_k,v_{l}]$).

Note that the union of
the sides remains unchanged, but every new side contains some point of $\partial\Omega$. Moreover,
the new sides has length not exceeding 2. It may happen however that some of the new vertical
sides may be shorter then $\frac12$. Then we make yet another transformation of the polygon
aggregating consecutive short sides together or to the new longer ones. The resulting combination of sides consists of the sides not longer then 3.


Proceeding in that way with all vertical sides we finally get a polygon $P_n'\in VH(3,\frac12)$ whose every horizontal and vertical side contain a point of $\partial\Omega$, except for some of those which are the vertical sides of the added on the last step rectangles (not connected to a non-zero vertical side of $P_{k-1}$).

Those later could arise only as a difference between two sides of the added on the last step rectangles, which means they are necessarily adjoined to an angle of $270^*$ (or the vertical side is a zero-length side adjoined to an angle of $180^\circ$). For those non-zero sides only we make one last addition of the rectangles provided by the Lemma \ref{rectangleonaside} along with the corresponding base and one of the horizontal sides. If it is a zero-length side we shift it till is meets a point of $\partial\Omega$. As every horizontal side contains a point of $\partial \Omega$ the vertical sides won't shift at a distance greater than $1$. Thus the length of the modified horizontal sides is not greater than $3$.

We see that the polygon obtained as the result of this last adjustment is the desired in the Proposition \ref{polygon} polynomial $P$: Every side now contains a point of $\partial\Omega$, and the lengthes of the sides are no more than $3$, thus $P\in VH(3,\frac12)$.


\subsection{Proof of the proposition \ref{cut.polygon}}\label{sec:cutpolygon}

Let  $P\in HV(3,\frac12)$ and $\mathcal{P}$ be a closed subset of $\partial P$ with nonempty
intersection with each side of $P$.

We will show that, there exist some universal constants $\delta,\Delta, \eta$, such that the
polygon $P$ can be cut into $\eta$-elementary
pieces $(A_j)$ with disjoint interiors, such that $\delta<diam(A_j)< \Delta$ for every $j$ and
such that every point of $\partial P\setminus \mathcal{P}$ belongs to the closure of exactly one of $A_j$'s.
In particular, every open arc of $\partial P$ disjoint with $\mathcal{P}$ intersects the closure of only one $A_j$.

As a first step, let us consider $P_\epsilon$, the intersection of $P$ and
the $\epsilon$-neighborhood of $\partial P$ where $\epsilon<\frac1{10}$.
Introduce $S_\epsilon=\partial P_\epsilon \cap P$. First we will show
that for all $\epsilon\in(0,\frac1{10})$ the set $S_\epsilon$ is a
simple rectifiable curve. Then we will show that $S_\epsilon$ is
$(10,\frac{\epsilon}2 )$-Lipschitz, i.e. that shortest sub-curve of
$S_\epsilon$ connecting two points at  Euclidian distance smaller then
$\epsilon/2$ is $10$-Lipschitz.


Then for $\epsilon=\frac1{16}$ and $\eta$ sufficiently large we will cut $P_\epsilon$ in
$\eta$-elementary pieces of diameter at least $\frac1{16}$ and at most $6$ in such a way that each point of $\partial P\setminus \mathcal{P}$ belongs to exactly one piece. Finally we will cut
$P\setminus P_\epsilon$ in $\eta$-elementary pieces of diameter at least $\frac1{64}$ and at most $\frac14$. Then our task
will be accomplished.

\subsubsection{Curve $S_\epsilon$}

We begin from the observation that with natural parametrization the boundary $\partial P$ is a
closed piecewise linear curve (not necessarily simple).
In the study of $S_\epsilon$ it will be convenient for us to remove
all zero-length sides $\tau$ adjoined to an angle $180^\circ$, so we assume that any angle of $P$ is
$90^\circ, 270^\circ$ or $360^\circ$. Still we may say that
$P\in VH(.,\frac12)$, even though the estimate on the lengthes of the sides is destroyed by the above removal.

\begin{lemma}\label{Senoisolated} $S_\epsilon$ contains no isolated points.
\end{lemma}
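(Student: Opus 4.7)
The plan is to argue by contradiction. Suppose $x\in S_\epsilon$ is isolated; since $d(x,\partial P)=\epsilon>0$ the point $x$ lies in the interior of $P$, so some open disk $B(x,r)$ is contained in $P$. Isolation gives $B(x,r)\cap S_\epsilon=\{x\}$, and by continuity the function $d(\cdot,\partial P)-\epsilon$ has constant non-zero sign on the connected punctured disk, so $x$ must be a strict local extremum of $d(\cdot,\partial P)$. Strict local minima are ruled out at once: picking any $y\in\partial P$ with $|y-x|=\epsilon$, every interior point $z$ of the segment $[x,y]$ satisfies $d(z,\partial P)\leq|z-y|<\epsilon$.

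The main task is therefore to exclude strict local maxima. Let $C(x)=\{y\in\partial P:|y-x|=\epsilon\}$ and $U(x)=\{(y-x)/\epsilon:y\in C(x)\}\subset S^1$. A first-order directional analysis of the min-of-distances representation of $d(\cdot,\partial P)$ near $x$ shows that $x$ is a strict local maximum only if $U(x)$ positively spans $\mathbb{R}^2$, i.e.\ the origin lies in the interior of $\mathrm{conv}\,U(x)$. Since every side of $P$ is horizontal or vertical, a closest point lying in the relative interior of a side contributes a direction in $\{\pm e_1,\pm e_2\}$, whereas a closest point at a reflex vertex contributes an arbitrary unit vector (a convex vertex cannot be the closest boundary point for an interior point of $P$). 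Positive spanning by such a set of directions then forces either (a) $U(x)$ to contain an antipodal pair coming from two closest side-points, meaning $P$ has two horizontal or two vertical sides facing each other across $x$ at distance $2\epsilon<\frac{1}{5}$, or (b) at least one closest point of $C(x)$ lies at a reflex vertex of $P$.

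In case (a) the rectangle of height $\frac{1}{2}$ attached to one of the facing sides, guaranteed by condition~(v) of the $VH$ definition to lie in $P$, must contain a point of the opposite side (since $\frac{1}{2}>2\epsilon$ and the two sides share an overlapping horizontal or vertical projection at the level of $x$); this contradicts the rectangle lying in $P$ while the opposite side lies in $\partial P$. In case (b) one invokes conditions (vii)-(viii), which at any reflex vertex $v$ of $P$ produce a closed $\frac{1}{2}$-sized axis-aligned square $Q(v,P)$ with $Q(v,P)\setminus\{v\}\subset P^\circ$; the positive-spanning configuration forces a perpendicular foot of $x$ on some competing closest side $\sigma$ to land inside $Q(v,P)\setminus\{v\}$, contradicting $\sigma\subset\partial P$. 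The threshold $\epsilon<\frac{1}{10}$ enters only via $\epsilon(1+\sqrt{2})<\frac{1}{2}$, which keeps the competing closest boundary points within the $\frac{1}{2}$-sized free region at $v$. The main obstacle is the sub-case analysis inside case (b): one must enumerate the admissible orientations of the free square at $v$ relative to the possible axis-aligned directions of the remaining closest sides and verify the forced intersection in each configuration.
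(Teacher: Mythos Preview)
Your opening coincides with the paper's: both reduce to showing that the punctured disk cannot lie entirely in $\{d(\cdot,\partial P)<\epsilon\}$, i.e.\ that $x$ cannot be a strict local maximum of $d(\cdot,\partial P)$. From that point on the strategies differ. The paper does \emph{not} try to classify the whole set $C(x)$ of closest boundary points. It fixes a single $p\in\partial P$ with $|p-x|=\epsilon$ and, when $p$ is a reflex vertex, uses the free square $Q(p,P)$ to construct explicitly a nearby $z$ with $d(z,\partial P)\geq\epsilon$: the circle of radius $\epsilon$ about $x$ meets $\partial Q(p,P)$ in $p$ and two further points $a,b$; since $\angle apb=90^\circ$ the chord $[a,b]$ is a diameter with midpoint $x$, and the perpendicular bisector of $[x,z]$ (with $z$ chosen on the perpendicular to $[a,b]$ through $x$, away from $p$) separates $\partial P$ from $z$ at distance $\geq\epsilon$. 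When $p$ is instead an interior foot on a side, the paper runs a limit argument along the normal to produce another closest point, iterating at most twice until a reflex vertex appears.

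Your positive-spanning criterion for a strict local maximum is valid (with the small caveat that the boundary case $0\in\partial\,\mathrm{conv}\,U(x)$ needs the second-order convexity of $|\cdot-p|$ for vertex contributions), and case~(a) is correctly handled. But case~(b) is a genuine gap, and you say so yourself: the required enumeration is announced, not carried out. Two concrete problems remain. First, your sketch presumes a competing closest point which is the \emph{interior} foot on some side~$\sigma$; however $C(x)$ may consist solely of reflex vertices, and then there is no $\sigma$ at all---one must instead force some other reflex vertex $v'\in C(x)$ into $Q(v,P)\setminus\{v\}$, which is a different geometric claim. Second, even when a side foot $p'\in\sigma$ exists, the only immediate information is $p'\in B(v,2\epsilon)$; to land $p'$ inside $Q(v,P)$ you must exclude the adjacent quadrants at $v$, and while the quadrant containing the exterior of $P$ is easy, the two quadrants carrying the rectangles $Q(\tau,P)$ of the sides incident to $v$ require knowing those sides are not too short---exactly the kind of configuration check you deferred. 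The paper's one-closest-point construction sidesteps all of this: it never compares two elements of $C(x)$ and never needs more than the single free square at the chosen $p$.
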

\begin{proof}
Observe first that $S_\epsilon$ consists of the
interior points $z\in P$ with $d(z,\partial P)=\epsilon$.

Let $y$ be an isolated point of $S_\epsilon$. Consider a circle $\mathcal{C}$ with
center  $y$ and radius $r$ such that neither the circle nor its
interior $\mathcal D$ contain other points of $S_\epsilon$ ($r$ is much less than $\epsilon$).
If there are $a,b\in\mathcal{C}\cup\mathcal{D}$ such that $d(a,\partial P)>\epsilon$ and $d(b,\partial P)<\epsilon$ then by the continuity of the distance there is $c\in\mathcal{C}\cup\mathcal{D}\setminus \{y\}$ such that $d(c,\partial P)=\epsilon$. Then $c\in S_\epsilon$ which contradicts the choice of $\mathcal{C}$.

Therefore, the points of the closed disc should have the distance to $\partial P$ either all
greater or all less than $\epsilon$. The former is impossible because $d(y,\partial P)=\epsilon$.


Therefore $d(c,\partial P)< \epsilon$ for all $c\in\mathcal{C}$

Since $d(y,\partial P)=\epsilon$ and $\partial
P$ is compact, there exists $p\in \partial P$ such that
$d(y,p)=\epsilon$. Let $p$ belongs to a side $\tau$. The distance from a point $y$ to a
line segment $\tau$ is either the
distance to the orthogonal projection of $y$ on $\tau$ (with $p$
being the image of $y$), or the distance to one of the the ends of the segment $\tau$
(in this case $\tau$ and $[y,p]$ form an angle
greater than $90^\circ$). Let us first consider the later case.

Since $[y,p]$ is a most short way between $y$ and $\partial
P$, we see that $(yp)\subset P$ and the internal angel at $p$ which contains $y$ is at least $270^\circ$ (see Figure 9-ab).
Moreover, $y\in Q(p,P)$.

Let $Q=Q(p,P)$ if the angle at $p$ is $270^\circ$, or the half of $Q(p,P)$ containing $y$ if the angle is $360^\circ$ (see Figure 9b). The side-length of $Q$ is $\frac12$.
Since $\epsilon<\frac14$,  circle $\mathcal{C}'$ with center in $y$ and radius
$\epsilon$ intersects $\partial Q$ in three points - $p$ and, say $a$ and $b$. As the angle $\angle apb$ inscribed in $\mathcal{C}'$ equals $90^\circ$, the
segment $[a,b]$ is a diameter of the circle, and $y$ is
its middle point. Let $z$ be the point of intersection of $\mathcal{C}$ and the
perpendicular bisector of $[a,b]$ outside $\triangle apb$ (remember that
$d(y,z)$ is small).
Let $\ell$ be the perpendicular bisector of $[y,z]$ (see Figure 10).

On one hand, if $q\in \partial P$ belongs to the component of $\Bbb R^2\setminus \ell$ which contains $y$, or to $\ell$ then $d(q,z)\geq d(q,y)\geq \epsilon$, on the other hand if $q\notin Q$ and $q$ belongs to the component of $\Bbb R^2\setminus \ell$ containing $z$ then
$d(q,z)>\epsilon=d(a,y)=d(b,y)$. Since $\partial P\cap Q=\emptyset$ it follows then $d(z,\partial P)\geq\epsilon$. This contradicts to the assumption that $d(z,\partial P)<\epsilon$.

It remains to consider the case when $p\in\partial P$
with $d(p,y)=\epsilon$ is the image of the orthogonal
projections of $y$ onto the side $\tau$ of $P$. If $p$ is one of the ends of the
side, then the angle at $p$ is at least $180^\circ$. Otherwise $p$ is
an interior point of some side $\tau$. In both cases $P\in VH(.,\frac12)$
implies that there exists a rectangle $R\subset P$, such that $p$ is an interior point to the shorter side of $R$, and the longer side of $R$ is parallel to $(y,p)$ and has length $\frac12$ (see Figure 11-ab).

Consider a sequence $\{y_j\}\subset \mathcal{D}\cap (\overline{yp}\setminus [y,p])$ which converges to $y$. By the choice of $\mathcal{D}$ it should be $d(y_j,\partial P)<\epsilon$, thus there exists a sequence $\{p_j\}\subset \partial P$, such that $d(y_j,p_j)<\epsilon$. Notice, that if $p_j\not\in (\overline{yp})_\sigma$, where $\sigma=d(y,\partial R)$. Indeed, $p_j\not\in R\subset P$, and if $x\in (\overline{yp})_\sigma\setminus R$, then $d(y_j,x)>\epsilon$.

Let $p'$ is a limit point of $\{p_j\}$. $p'\in \partial P$ and $d(y,p)=\epsilon$. If $p'$ is not the image of an orthogonal projection of $y$ to a side of $P$ we can repeat the argument of the first case. If $p'$ is the image of an orthogonal projection of $y$ to a side of $P$, then $\overline{yp'}$ is either vertical or horizontal. As $p'\not \in (\overline{yp})_\sigma$, it must be $\overline{yp}\bot \overline{yp'}$. As before, there exists a rectangle $R'\subset P$ in which the longer side is parallel to $\overline{yp'}$ and has length $\frac12$ (see Figure 12).

Consider now a sequence $\{y_j'\}\subset \mathcal{D}\cap R\cap (\overline{yp'}\setminus [y,p'])$ which converges to $y$. By the choice of $\mathcal{D}$ there exists $\{p_j'\}\subset \partial P$, such that $d(y_j',p_j')<\epsilon$. But, with the same argument as above $p_j'\not \in (\overline{yp})_\sigma\cup(\overline{yp'})_{\sigma'}$, where $\sigma'=d(y,\partial R')$. Thus, a limit point of $\{p_j'\}$ (which exists due to compactness of $\partial P$) can not be the image of orthogonal projection of $y$ on a side of $P$ and we can use the argument of the very first case of the proof.
\end{proof}

\begin{lemma}\label{structure:arcline} $S_\epsilon$ consists of finitely many arcs of
circles of radius $\epsilon$ and segments of vertical or horizontal
lines.
\end{lemma}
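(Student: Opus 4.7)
\smallskip

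\noindent\textbf{Proof plan.} The strategy is to classify each point $y\in S_\epsilon$ by the location of its nearest point on $\partial P$. Since $\partial P$ is a finite union of horizontal and vertical segments, the function $z\mapsto d(z,\partial P)$ is piecewise smooth, and for each $y\in S_\epsilon$ a closest point $p(y)\in \partial P$ with $d(y,p(y))=\epsilon$ exists by compactness. There are only two possibilities: either $p(y)$ is an interior point of some side $\tau$, in which case $[y,p(y)]$ is perpendicular to $\tau$ (by the orthogonality criterion for nearest points on a line segment), or $p(y)$ is a vertex of $P$. I will show each class is contained in a straight line parallel to a side, or on a circle of radius $\epsilon$ centered at a vertex, respectively.

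\smallskip

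For the first class, fix a side $\tau$ and let $S_\epsilon^\tau$ denote the set of $y\in S_\epsilon$ admitting some nearest point in the relative interior of $\tau$. Then $y$ lies on the line parallel to $\tau$ at distance $\epsilon$, on the interior side of $\tau$ with respect to $P$, and its orthogonal projection onto that parallel line falls within the strip perpendicular to $\tau$ over its interior. Hence $S_\epsilon^\tau$ is contained in a line segment parallel to $\tau$, of length at most the length of $\tau$. For the second class, fix a vertex $v$ and let $S_\epsilon^v$ be the set of $y\in S_\epsilon$ for which $v$ is a nearest point on $\partial P$. Then $S_\epsilon^v$ lies on the circle of radius $\epsilon$ centered at $v$. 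Depending on whether the interior angle at $v$ is $90^\circ$, $270^\circ$ or $360^\circ$, the local geometry forces $S_\epsilon^v$ to be (possibly empty), one arc, or two arcs of this circle; moreover, the competing constraint that $v$ be the closest boundary point (and not some interior point of a side) restricts $y$ to an arc of angular measure at most the exterior sector at $v$.

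\smallskip

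Since there are only finitely many sides and vertices, $S_\epsilon$ is a finite union of subsets of lines and circles as described. I will then show that these pieces fit together continuously: a straight piece $S_\epsilon^\tau$ meets the circular piece $S_\epsilon^v$ for an endpoint $v$ of $\tau$ at the point where the perpendicular to $\tau$ at $v$ meets the circle of radius $\epsilon$ centered at $v$, and at such a transition point the nearest point on $\partial P$ passes continuously from the interior of $\tau$ to the vertex $v$. Lemma \ref{Senoisolated} ensures no spurious isolated points appear; together with the bound $\epsilon<1/10$ and the constraint $P\in VH(\cdot,\tfrac12)$, this prevents pieces from degenerating or self-intersecting in pathological ways.

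\smallskip

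\noindent\textbf{Main obstacle.} The only delicate part is showing that the classification into the two types is exhaustive in a usable way, and that the finite set of straight and circular pieces really assembles into $S_\epsilon$ with the claimed finite structure. The subtlety is that a single point $y\in S_\epsilon$ may have more than one nearest point on $\partial P$ (for instance at transition points between straight and circular pieces, or when $y$ is equidistant from two sides); one must verify that at such ambiguity points the local branches of $S_\epsilon$ glue together consistently. This is handled by the same kind of local geometric analysis as in Lemma \ref{Senoisolated}, relying on the fact that all sides of $P$ are axis-aligned and that the rectangles $Q([v_k,v_{k+1}],P)$ and $Q(v_k,P)$ from the $VH$-structure exist, forcing the competing nearest points into a well-controlled finite list near each $y$.
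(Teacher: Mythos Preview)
Your approach is morally the same as the paper's --- classify points of $S_\epsilon$ by their nearest boundary feature --- but your execution leaves a real gap. You establish only that $S_\epsilon$ is contained in a finite union of lines and circles; you never argue that the subset of each line or circle that actually belongs to $S_\epsilon$ is a \emph{finite union of intervals or arcs} rather than, say, a complicated closed set. Invoking Lemma~\ref{Senoisolated} rules out isolated points, but a closed subset of a segment with no isolated points need not be a finite union of intervals. The paper closes this gap in one line by writing
\[
S_\epsilon = P \cap \bigcup_{\tau}\Big( S_\epsilon(\tau)\setminus \bigcup_{\tau'\neq\tau} N_\epsilon(\tau')\Big),
\]
where $S_\epsilon(\tau)=\partial N_\epsilon(\tau)$ is the stadium boundary (two segments plus two semicircles), and then observing that each open set $N_\epsilon(\tau')$ meets any segment or circle in an open interval or arc. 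Removing finitely many open arcs from a finite union of closed arcs leaves finitely many closed arcs and isolated points; Lemma~\ref{Senoisolated} then disposes of the points. This is the missing step in your argument, and it is short --- you should state it explicitly.

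Separately, your discussion of how straight and circular pieces ``glue'' at transition points, and of ambiguity when $y$ has several nearest boundary points, is not needed here: the lemma asserts only a finite decomposition, not that $S_\epsilon$ is a connected curve. That analysis belongs to the later lemmas (\ref{structur.closesttomany} onward), and including it here obscures what little is actually required.
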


\begin{proof} Denote the open $\epsilon$-neighborhood of the side
$\tau_k$ by $N_\epsilon(\tau_k)$ and its boundary by
$S_\epsilon(\tau_k)$.
We have  $S_\epsilon=P\cap (\cup
(S_\epsilon(\tau)\setminus(\bigcup\limits_{\tau'\neq \tau}
N_\epsilon(\tau'))))$, where the unions run over all sides of $P$.

The set $S_\epsilon(\tau)$ consists of two
intervals parallel to $\tau_k$ and two semicircles of radius $\epsilon$.
The intersection of
$N_\epsilon(\tau)$ with any circle of radius $\epsilon$ (not centered at an interior point of $\tau$) or an interval is either an (open) arc
of the circle or an open segment of the interval.
Thus $ S_\epsilon(\tau)\setminus(\bigcup\limits_{\tau'\neq \tau}
N_\epsilon(\tau'))$ is a finite union of closed arcs, intervals or points.

By Lemma \ref{Senoisolated} $S_\epsilon$ contains no isolated points, so it
is a union of finitely many closed arcs or segments.
\end{proof}

It is convenient further to think not of the polygon $P$ in the plane, but of a covering of $\overline{P}$ by a manifold, which is isometric inside $P$ but do not glue together the points of the curve which bounds $P$, so that $\partial P$ becomes a simple curve. As $S_\epsilon$ is inside $P$ this identification does not affect its topological properties.

Let $\phi(t):[0,2n]\rightarrow \partial P$ be the natural parametrization of $\partial P$, such that each interval
$[k,k+1]$ parameterize one side of $P$. We will consider another parametrization of the curve
$$\widetilde{\phi}(t)=
\left\{
\begin{array}{ll}
    \phi(t-k), & \hbox{for $t\in[2k,2k+1]$;} \\
    \phi(k+1), & \hbox{for $t\in[2k+1,2k+2]$.}
\end{array}
    \right. $$
defined on the interval $[0,4n]$. (Observe that this is not a
simple parametrization even for a simple curve.)

We would like  $\widetilde{\psi}(t)$ to be a closest to
$\widetilde{\phi}(t)$ point of $S_\epsilon$ with respect to the
interior metrics $d_P$ of $P$, (i.e. $d_P(x,y)$ is the minimal length of a
curve connecting $x$ and $y$ inside the polygon $P$). Notice that in
this way the curve $\widetilde{\psi}$ is not yet well-defined, because several
points of $S_\epsilon$ may have the same distance to a point of
$\partial P$. To give a proper definition we have to learn more
about the set $S_\epsilon$ and the distance of its points to $\partial P$.

\begin{lemma}\label{lem.4eps} For any $p\in\partial P$ there exists $z\in S_\epsilon$
with $d_P(p,z)<4\epsilon$.
\end{lemma}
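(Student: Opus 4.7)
My plan is to apply the intermediate value theorem to the continuous function $x\mapsto d(x,\partial P)$ along a short path inside $\overline{P}$ starting at $p$. Since this distance vanishes at $p$, it will suffice to exhibit a path of length strictly less than $4\epsilon$ ending at a point $q$ with $d(q,\partial P)\ge\epsilon$; IVT will then produce a point $z\in S_\epsilon$ with $d_P(p,z)<4\epsilon$. I will construct this path from the $VH(\cdot,\frac12)$ structure; since $\epsilon<\frac1{10}$ the scale $\frac12$ is comfortably larger than $\epsilon$, giving ample room.

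Case 1: $p$ lies in the interior of a side $\tau$ at Euclidean distance at least $\epsilon$ from both endpoints of $\tau$. Condition (v) provides the rectangle $Q=Q([v_k,v_{k+1}],P)$ with base $\tau$ and depth $\frac12$, whose interior lies in $P$. I will move from $p$ perpendicular to $\tau$ by $\epsilon$ to a point $q$ inside $Q$. Since $p$ is at distance $\ge\epsilon$ from both endpoints of $\tau$ and $2\epsilon<\frac12$, the disc $D(q,\epsilon)$ will lie in $\overline{Q}$, and inside $\overline{Q}$ the only portion of $\partial P$ reachable by $D(q,\epsilon)$ is $\tau$ itself. Hence $d(q,\partial P)=\epsilon$, attained at $p$, so $z=q\in S_\epsilon$ and $d_P(p,z)=\epsilon$.

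Case 2: $p$ is within distance $\epsilon$ of a vertex $v$ (possibly $p=v$). I first walk along $\tau$ to $v$ (length $\le\epsilon$), then into $\overline{P}$ using the $VH$ data at $v$. According as the interior angle at $v$ is $90^\circ$, $270^\circ$, or $360^\circ$, conditions (v), (vii), or (viii) furnish a closed square (or appropriate analogue) $Q\subset\overline{P}$ of side $\frac12$ with corner $v$ and $\overline{Q}\cap\partial P$ essentially equal to $\{v\}$. Walking along the diagonal of $Q$ from $v$ a further distance $\epsilon\sqrt{2}$ reaches a point $q$ at Euclidean distance $\epsilon\sqrt{2}$ from $v$ and at distance $\ge\epsilon$ from every other part of $\partial P$, since such points lie outside $\overline{Q}$ and therefore beyond the side of $Q$ through $v$, which is at perpendicular distance $\epsilon$ from $q$. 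Thus $d(q,\partial P)\ge\epsilon$, and IVT along the full path produces $z$ with $d_P(p,z)\le(1+\sqrt{2})\epsilon<4\epsilon$.

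The hard part will be justifying the bound $d(q,\partial P)\ge\epsilon$ in Case 2, which rests on the geometric fact $\overline{Q}\cap\partial P=\{v\}$ from the $VH$ conditions: this excludes any hidden piece of $\partial P$ lying within $\epsilon$ of $q$. A further subtlety is the $360^\circ$ case, where (viii) provides two adjacent squares and one must pick the one on the same side of the slit as $p$; this is unambiguous in the universal cover of $\overline{P}$ introduced just before the lemma.
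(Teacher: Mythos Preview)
Your overall strategy---use the $VH(\cdot,\tfrac12)$ data to reach a nearby point $q$ with $d(q,\partial P)\ge\epsilon$ and then apply the intermediate value theorem along the path---is precisely the paper's. In particular the diagonal walk of length $\sqrt2\,\epsilon$ inside $Q(b,P)$ at a $270^\circ$ or $360^\circ$ vertex is exactly what the paper does.

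There is, however, a real gap in your Case~2 when the interior angle at $v$ is $90^\circ$. Condition~(v) attaches a rectangle $Q(\tau,P)$ of depth $\tfrac12$ to each \emph{side} $\tau$; it does not furnish a square at the vertex with $\overline Q\cap\partial P=\{v\}$. Conditions~(vii) and~(viii) apply only to $270^\circ$ and $360^\circ$ vertices. If both sides $\tau_1,\tau_2$ meeting at a $90^\circ$ vertex $v$ have length $<2\epsilon$, the union $Q(\tau_1,P)\cup Q(\tau_2,P)$ need not contain the disc $D(q,\epsilon)$ about your diagonal point $q=(\epsilon,\epsilon)$, and nothing you have invoked prevents a further piece of $\partial P$ (beyond $v_{k\pm1}$) from lying within $\epsilon$ of $q$. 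So the inequality $d(q,\partial P)\ge\epsilon$ is unjustified in this sub-case.

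The paper avoids the problem by splitting on side length rather than on distance to a vertex. If $|\tau|>2\epsilon$, the parallel interval $\tau_\epsilon$ at depth $\epsilon$ inside $Q(\tau,P)$ has a central sub-interval $\tau'_\epsilon\subset S_\epsilon$ of length $|\tau|-2\epsilon>0$, and every point of $\tau$ lies within $\sqrt2\,\epsilon$ of $\tau'_\epsilon$. If $|\tau|\le2\epsilon<\tfrac12$, then condition~(v) forces one of the two adjacent angles to be at least $270^\circ$ (both being $90^\circ$ would make the $\tfrac12$-deep rectangle on a neighbouring side contain points of $\partial P$); one walks to that vertex $b$ (distance $\le2\epsilon$) and then along the diagonal of $Q(b,P)$, yielding $d_P(p,z)\le(2+\sqrt2)\epsilon<4\epsilon$. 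The $90^\circ$-vertex case simply never arises.
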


\begin{proof} Let $\tau$ be a side of $P$ and $p\in\tau$.

Consider first the case $|\tau|>2\epsilon$.
Let $\tau_\epsilon=Q(\tau,P)\cap S_\epsilon(\tau)$. It is clear that $\tau_\epsilon$ is an interval parallel to $\tau$ and of the same length contained in $Q(\tau,P)\subset P$. We have $S_\epsilon\supset\tau_\epsilon\setminus\{z\in\tau_\epsilon\,:\,d(z,\partial P)<\epsilon\}$.
Since $\partial P\cap Q(\tau,P)=\emptyset$ it is clear that
$\tau_\epsilon'=\{z\in\tau_\epsilon\,:\, d(z,\partial Q(\tau,P))>\epsilon\}\subset S_\epsilon$. It is easy to see that
$\tau_\epsilon'$ is a subinterval of $\tau_\epsilon$ of length $|\tau|-2\epsilon$ and, in particular, is not empty. The distance from any point of $\tau$ to the subinterval $\tau_\epsilon'$ is at most $\sqrt{2}\epsilon$.

Consider now side $\tau=[a,b]$ of length not exceeding $2\epsilon<\frac12$. Since $P\in VH(.,\frac12)$ one of the adjoint angles has to be greater than $90^\circ$ and, as the angles of the polygon $P$ are $90^\circ$, $270^\circ$ or $360^\circ$, is at least $270^\circ$. W.l.o.g. we can assume that it is the angle at the vertex $b$ which is $270^\circ$ or greater.

Consider a point
$x\in Q(b,P)$, such that $\overline{xb}$ makes an angle $45^\circ$ to the vertical and the horizontal direction and
$d(x,b)=\sqrt{2}\epsilon$. Since $d(x,\partial P)\geq d(x,\partial Q(b,P))=\epsilon$ and the distance to
$\partial P$ is a continuous function, there exists  $z\in[b,x]$  such that $d(z,\partial P)=\epsilon$ and $d(y,\partial P)<\epsilon$ for $y\in(b,z)$. Thus, $z\in S_\epsilon$ and for any $p\in [a,b]$ holds
$d(z,p)\leq d(z,b)+d(b,p)\leq \sqrt{2}\epsilon+2\epsilon<4\epsilon$.
\end{proof}

The Lemma \ref{lem.4eps} shows that  any point of $\partial P$ is close to $S_\epsilon$. Now we look at the most close points. For $p\in\partial P$ we denote by $m_\epsilon(p)$ the set of all points
of $S_\epsilon$ which are the closest to $p$ among the points of $S_\epsilon$.

\begin{lemma}
If $y\in m_\epsilon(q)\cap m_\epsilon(q^*)$, $y\in S_\epsilon$ for some $q\neq q^*$, then there are two sides $\tau\neq\tau'$ of $P$, such that $y\in S_\epsilon(\tau)\cap S_\epsilon(\tau')$, and the points $p\in\tau$, $p'\in \tau'$, such that $d(y,p)=d(y,p')=\epsilon$, and $p\neq p'$.
\end{lemma}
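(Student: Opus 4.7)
My plan is to reduce the statement to the single claim that $F(y) := \{p \in \partial P : d(y,p) = \epsilon\}$ contains at least two points. Assuming this, the conclusion follows easily: for each side $\tau$, one has $|F(y) \cap \tau| \leq 1$ because the perpendicular from $y$ onto the supporting line of $\tau$ meets $\tau$ either at exactly one point (the foot of the perpendicular, if it lies in $\tau$) or not at all (in which case distances from $y$ are monotone along $\tau$ and at most one endpoint of $\tau$ can be at distance $\epsilon$, using $d(y,\tau) \geq d(y,\partial P) = \epsilon$). Since two distinct vertices of a polygon share at most one side, whenever $F(y)$ contains two distinct points one can select adjacent sides so that $\tau \neq \tau'$ and $p \neq p'$; the inclusion $y \in S_\epsilon(\tau) \cap S_\epsilon(\tau')$ is then automatic from $d(y,\tau) = d(y,p) = \epsilon$ and the analogous equality for $\tau'$.

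To establish $|F(y)| \geq 2$, I will prove the following dichotomy for every $q_1 \in \partial P$ with $y \in m_\epsilon(q_1)$: either \textbf{(a)} $d(y,q_1) = \epsilon$ (so $q_1 \in F(y)$), or \textbf{(b)} $q_1$ is a $90^\circ$ vertex of $P$ and $y$ is the ``L-corner'' of $S_\epsilon$ near $q_1$, sitting at distance $\epsilon\sqrt{2}$ from $q_1$ on the interior diagonal. Since, after the simplification made in the excerpt (removal of zero-length $180^\circ$ sides), the angles of $P$ lie in $\{90^\circ,270^\circ,360^\circ\}$, I verify this case by case using the $VH(\cdot,\frac12)$-geometry together with $\epsilon < \frac1{10}$. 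For $q_1$ interior to a side $\tau_1$, the inward perpendicular segment of length $\epsilon$ from $q_1$ lies inside the inscribed rectangle $Q([v_k,v_{k+1}],P) \subset P$, hence is in $S_\epsilon$, giving $d(q_1,S_\epsilon) = \epsilon$. For a $270^\circ$ or $360^\circ$ vertex, the inscribed square $Q(q_1,P)$ is free of boundary other than $q_1$, so $S_\epsilon$ inside it is a circular arc of radius $\epsilon$ centred at $q_1$, and again $d(q_1,S_\epsilon) = \epsilon$. For a $90^\circ$ vertex, the quarter-disc of radius $\beta = \frac12$ centred at $q_1$ is contained in the union of the two adjacent inscribed rectangles, within which $S_\epsilon$ is precisely an L whose vertex lies at distance $\epsilon\sqrt{2}$ from $q_1$, while any $S_\epsilon$-point outside this quarter-disc is at distance at least $\beta > \epsilon\sqrt{2}$ from $q_1$; hence $m_\epsilon(q_1)$ consists of this single L-corner, and case (b) delivers two distinct feet of $y$ on the two sides meeting at $q_1$, namely the orthogonal projections of the L-corner onto those sides.

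Applying the dichotomy to both $q$ and $q^*$: if either falls in case (b), the pair of feet supplied by that case concludes the proof. Otherwise both $q, q^* \in F(y)$ and, since $q \neq q^*$ by hypothesis, we have $|F(y)| \geq 2$ and the first paragraph finishes the argument. The main obstacle is the verification of the dichotomy, and specifically of the claim in case (b) that no stray piece of $S_\epsilon$ can enter the ball of radius $\epsilon\sqrt{2}$ around a $90^\circ$ vertex $q_1$; this is exactly the quantitative role played by the inscribed rectangles from the $VH(\cdot,\frac12)$-property.
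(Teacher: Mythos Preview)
Your reduction to the claim $|F(y)|\ge 2$ is correct and clean, and the first paragraph is fine. The gap is in your dichotomy. Take a $90^\circ$ vertex at the origin with sides along the positive axes and $P$ locally the first quadrant, and let $q_1=(a,0)$ with $0<a<\epsilon$ be an \emph{interior} point of the horizontal side. The inward perpendicular foot $w=(a,\epsilon)$ lies in the inscribed rectangle, hence in $P$, but it is \emph{not} in $S_\epsilon$: indeed $d(w,\tau_0)=a<\epsilon$, so $d(w,\partial P)<\epsilon$. In fact along the whole inward perpendicular ray one has $d((a,t),\tau_0)\le a<\epsilon$, so it never meets $S_\epsilon$. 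The genuine nearest $S_\epsilon$-point to $q_1$ is the L-corner $(\epsilon,\epsilon)$, at distance $\sqrt{(\epsilon-a)^2+\epsilon^2}>\epsilon$. Thus for such $q_1$ neither (a) nor (b) holds: $d(y,q_1)>\epsilon$ yet $q_1$ is not a vertex. Your case-by-case ``for $q_1$ interior to a side \dots\ hence is in $S_\epsilon$'' is exactly where the argument breaks.

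The conclusion of the lemma does survive in this scenario (the L-corner has two feet $(\epsilon,0)$ and $(0,\epsilon)$), so the approach is salvageable, but you would need to prove the correct statement: whenever $d_P(q_1,y)>\epsilon$ for $y\in m_\epsilon(q_1)$, the point $y$ already has two distinct feet. This is essentially what the paper does, but by a different route: it never attempts to compute $d_P(q_1,S_\epsilon)$. Instead it argues by contradiction that if $y$ lies on only one $S_\epsilon(\tau)$ then $S_\epsilon$ is a smooth arc near $y$; the shortest $\overline P$-paths from $q$ and from $q^*$ to $y$ must then both be perpendicular to this arc at $y$, forcing them to be collinear with the segment $[y,p]$ to the unique foot $p$; a length estimate using the $\tfrac12$-depth rectangles and the $4\epsilon$ bound of the previous lemma then yields the contradiction. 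That perpendicularity-and-collinearity idea is what replaces your missing case.
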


\begin{proof} Denote by $\tau$ the side of $P$ such that $y\in S_\epsilon(\tau)$ (as $S_\epsilon$ is a union of subsets of $S_\epsilon(\tau)$ for the sides of $P$, such a side exists).

Suppose $y\not \in S_\epsilon(\tau')$, for any side $\tau'\neq\tau$. Since $y\in S_\epsilon$, we know that $y$ does not belong to $\epsilon$-neighborhood of any side of $P$. And, as $y\not\in S_\epsilon(\tau')$, we know that $d(y,\tau')>\epsilon$ for every side $\tau'\neq \tau$. Thus, there exists small disc $D(y,r)$ such that $S_\epsilon\cap D(y,r)=S_\epsilon(\tau)\cap D(y,r)$, is a 1-smooth curve.

Let $p$ be the closest to $y$ point of $\tau$. Then  $d(y,p)=\epsilon$, and $(p,y)\subset P$. By the assumption, there exists another point $q^*\in\partial P$ for which $y\in m_\epsilon(q^*)$.
Let $\gamma$ be the shortest path joining $q^*$ and $y$ inside $\overline P$ (here we mean $\overline{P}$ on the universal covering). Consider the closest to $y$ point $q'\in\gamma\cap \partial P$. Then $y\in m_\epsilon(q')$ and $(q',y)\subset P$.

Suppose $p\neq q'$. Since $y\in m_\epsilon(p)\cap m_\epsilon(q')$ and $S_\epsilon\cap D(y,r)$ is a smooth curve passing through $y$, both $(y,p)$ and $(y,q')$ are orthogonal to the tangent line to $S_\epsilon\cap D(y,r)$ at $y$. Thus $p,q',y$ are collinear and,
since $(p,y)\subset P$ and  $(q',y)\subset P$, none of them can contain neither $p$ nor $q'$. This means that $y\in [p,q']$.

There are two possibilities for placement of $\overline{py}$ with respect to $\tau$:
\begin{enumerate}
\item If $p$ is an interior or endpoint of $\tau$ and $\overline{yp}\perp \tau$ so that
$(y,p)\subset Q(\tau,P)$.
\item If $p$ is an endpoint of $\tau$, then the angle at $p$ is greater than $90^\circ$ and
$(p,y)\subset Q(p,P)$
\end{enumerate}

As $q'\in\partial P$, so in the first case, $q'\not\in Q(\tau,P)$ and in the second case $q'\not \in Q(p,P)$. Then $d(p,q')\geq \frac12$ and $d(q',y)\geq \frac12 -d(y,p)=\frac12-\epsilon>4\epsilon$, which contradicts the Lemma \ref{lem.4eps}.

Thus, $p=q'\neq q^*$ and the shortest path from $q^*$ to $y$ passes through $p$. This can not happen if $p$ is an interior point of the side $\tau$.

If $p$ is an endpoint of $\tau$, then $y$ belongs to one of the semi-circles which constitute $S_\epsilon(\tau)$.
Denote the side adjoint to $\tau$ at $p$ as $\tau'$.

If the angle at $p$ is $270^\circ$, than $S_\epsilon(\tau)\setminus N_\epsilon(\tau')$ contains only half of the semicircle with center at $p$, which is also contained in $S_\epsilon(\tau')$, thus $y\in S_\epsilon(\tau')$.

If the angle at $p$ is $360^\circ$, than the whole semicircle of $S_\epsilon(\tau)$ with center at $p$ is contained in $S_\epsilon(\tau')$, and thus again $y\in S_\epsilon(\tau')$.

Assume now that $y\in S_\epsilon(\tau)\cap S_\epsilon(\tau')$ and $p\in \partial P$ is the only point for which $d(y,p)=\epsilon$. Then $p\in \tau\cap\tau'$ is the common endpoint of $\tau$ and $\tau'$ and the angle at $p$ is $270^\circ$ or $360^\circ$.

Now either there exists another side $\tau''$ such that $y\in S_\epsilon(\tau'')$ and the lemma follows or, otherwise, there exists some small disc $D(y,r)$ such that $D(y,r)\cap S_\epsilon=D(y,r)\cap S_\epsilon(\tau)$ or $D(y,r)\cap S_\epsilon=D(y,r)\cap S_\epsilon(\tau')$. When the disc exists in the later case we repeat partially the reasoning from above to get a contradiction. Indeed, suppose that there is a point $q^*\neq p$ from $\partial P$ for which
$y\in m_\epsilon(q^*)$.
Then $d_P(q^*,y)$ is realized by some curve $\gamma\in \overline{P}$, which passes through $p$.

If the angle at $p$ is $270^\circ$ then there is $q'\in \gamma\setminus [y,p]$ such that $(y,q')\subset P$ and $[q',y]$ is shorter than the part of curve $\gamma$ connecting $q'$ to $y$. This contradicts the choice of $\gamma$ as a most short path from $q^*$ to $y$.

If the angle at $p$ is $360^\circ$, consider the point $y'$ which is intersection of the line containing $\tau$ and the semicircle of $S_\epsilon(\tau)$ centered at $p$ (see Figure 13). By the properties (8) and (5) of $P\in VH(.,\frac12)$ there is an arc $D(y',r)\cap S_\epsilon(\tau)\subset S_\epsilon$. Then for a suitable chosen $y^*\in D(y',r)\cap S_\epsilon$ and $q'\in \gamma\setminus [y,p]$ holds $(y^*,q')\in P$, and the curve consisting of the part of $\gamma$ joining $q^*$ with $q'$ and $[y,q']$ is shorter than $\gamma$. But then, $d_P(y,q^*)<d_P(y,q^*)$ which contradicts to the assumption $y\in m_\epsilon(p^*)$.

\end{proof}

Now, we are ready to prove the following important fact.

\begin{lemma}\label{structur.closesttomany}If $z\in m_\epsilon(p)\cap m_\epsilon(q)$ for $p\neq q$ then there is an interval
$$[t_0,t_1]\subset [0,4n]/\{0\sim 4n\}$$ (when we talk about
intervals, we assume $0$ to be identified with $4n$), such that $m_\epsilon(\widetilde{\phi}(t))=\{z\}$
for all $t\in(t_0,t_1)$. Moreover $d(z,\widetilde{\phi}(t_0))=d(z,\widetilde{\phi}(t_1))=\epsilon$,
and $z\not\in  m_\epsilon(\widetilde{\phi}(t))$ for
$t\not\in [t_0,t_1]$.
\end{lemma}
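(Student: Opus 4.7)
The plan is to apply the preceding lemma to pin down the local structure of $\partial P$ and $S_\epsilon$ near $z$, identify the candidates for $t_0, t_1$ as the parameter values corresponding to the two witness points at distance $\epsilon$ from $z$, and verify the three required assertions by a local geometric analysis, ruling out interference from distant sides via the $VH(\cdot,\frac12)$ property.

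First, by the preceding lemma there are distinct sides $\tau, \tau'$ with $z \in S_\epsilon(\tau) \cap S_\epsilon(\tau')$ and distinct points $p^* \in \tau$, $q^* \in \tau'$ such that $d(z, p^*) = d(z, q^*) = \epsilon$. Since $\epsilon<\frac{1}{10}$ and $P\in VH(\cdot,\frac12)$, two different sides can only interact in an $\epsilon$-neighborhood if they share a vertex, so $\tau$ and $\tau'$ meet at a common vertex $v$. Examining the three possible angles at $v$: at a concave angle ($270^\circ$ or $360^\circ$) the whole set $S_\epsilon(\tau)\cap S_\epsilon(\tau')$ lies on the circle of radius $\epsilon$ about $v$, and both orthogonal projections of any of its points onto $\tau$ and $\tau'$ collapse to $v$, contradicting $p^*\neq q^*$; hence the angle at $v$ is $90^\circ$. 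Placing $v$ at the origin with $P$ locally occupying the first quadrant and $\tau, \tau'$ lying along the positive axes, one finds $z = (\epsilon, \epsilon)$, $p^* = (\epsilon, 0)$, $q^* = (0, \epsilon)$, and the local portion of $S_\epsilon$ inside $P$ is the L-shape $\{x=\epsilon,\,y\geq\epsilon\}\cup\{y=\epsilon,\,x\geq\epsilon\}$ with corner $z$.

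I then choose $t_0, t_1$ with $\widetilde\phi(t_0)=p^*$ and $\widetilde\phi(t_1)=q^*$, so that the open arc $\widetilde\phi((t_0,t_1))$ traverses $\partial P$ from $p^*$ through the pause of $\widetilde\phi$ at $v$ to $q^*$. The verifications are as follows: for $t\in(t_0,t_1)$ the point $w=\widetilde\phi(t)$ lies on $[p^*,v]\cup\{v\}\cup[v,q^*]$, and a direct planar computation shows that $z$ is the unique minimizer of $d(\cdot,w)$ on the local L-shape; the endpoint condition $d(z,\widetilde\phi(t_i))=\epsilon$ is automatic from the choice of $t_i$, while $z\in m_\epsilon(\widetilde\phi(t_i))$ follows from the uniform lower bound $d(y,\widetilde\phi(t_i))\geq\epsilon$ for every $y\in S_\epsilon$; and for $t$ just outside $[t_0,t_1]$, $w$ moves onto $\tau$ past $p^*$ (or symmetrically onto $\tau'$ past $q^*$), where the inward perpendicular from $w$ meets $S_\epsilon$ at a point at distance exactly $\epsilon<d(z,w)$, ruling out $z\in m_\epsilon(w)$. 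The main obstacle is to ensure that no other side $\tau''\neq\tau,\tau'$ contributes a closer point of $S_\epsilon$ near the corner; this is handled by the $VH(\cdot,\frac12)$ property, since $Q(\tau,P)\cup Q(\tau',P)\subset P$ together cover a $\frac12\times\frac12$ square around $v$, forcing any such $\tau''$ to be at Euclidean distance $\geq\frac12$ from this square and hence $S_\epsilon(\tau'')$ to remain well away from both $z$ and $w$.
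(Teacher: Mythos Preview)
Your argument rests on the claim that the two sides $\tau,\tau'$ with $z\in S_\epsilon(\tau)\cap S_\epsilon(\tau')$ must share a common vertex, and this claim is false. The $VH(\cdot,\tfrac12)$ hypothesis does \emph{not} force non-adjacent sides to be far apart: condition~(v) only provides a depth-$\tfrac12$ rectangle on the \emph{interior} side of each edge, while sides themselves may be arbitrarily short (shorter than $\epsilon$) provided one adjacent angle is $270^\circ$. Consequently $\partial P$ can contain a whole ``staircase'' of many short horizontal and vertical sides, all packed inside an $\epsilon\times\epsilon$ square; the two witnesses $p^*\in\tau$ and $q^*\in\tau'$ then sit on the first and last of these sides, which are not adjacent. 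This is precisely the generic configuration the paper analyses (see the square $R$ with vertices $z,p,p^*,p'$ and the staircase curve $\gamma_R\subset\overline R$), and the interval $[t_0,t_1]$ in the statement must parametrise that entire staircase, not merely a pause of $\widetilde\phi$ at a single $90^\circ$ vertex.

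Your reduction to a single $90^\circ$ corner therefore misses the substance of the lemma. What is actually needed is: (i) an argument that one of the two boundary arcs joining $p^*$ and $q^*$ is trapped in the small square $R$ (this uses repeatedly that short sides force a $270^\circ$ neighbour and that the $Q$-rectangles/squares cannot contain $p^*$ or $q^*$); (ii) a proof that for every $r$ on this trapped arc the unique nearest point of $S_\epsilon$ is $z$ (any competing path to $S_\epsilon$ must exit through $[z,p^*]\cup[z,q^*]$ and is then strictly longer); and (iii) a local perturbation argument showing that for $r$ on the complementary arc one can always shorten a path ending at $z$ by sliding its endpoint along $S_\epsilon(\tau)$ or $S_\epsilon(\tau')$ just outside $R$. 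The paper carries out exactly this programme, with a further case split according to whether $[z,p^*]\perp\tau$ and $[z,q^*]\perp\tau'$ or one of the witnesses is an endpoint of its side.
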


\begin{proof} As it was shown in the previous Lemma,
there exist $p\in \tau\subset \partial P$ and $p'\in \tau'\subset
\partial P$ such that $d(z,p)=d(z,p')=\epsilon$ and $p\neq p'$, $\tau\neq \tau'$.

There are two possibilities for the position of $[z,p]$ with respect to
$\tau$ (and two for the position of $[z,p']$ with respect to $\tau'$): either $[z,p]\perp\tau$
or $p$ is an end of $\tau$ and $[z,p]$ form with $\tau$ an angle greater than $90^\circ$
(in the later case the angle at $p$ is at least $270^\circ$, see Figure 14).

Let us first consider the case when both $[z,p]\bot\tau$ and
$[z,p']\bot\tau'$ (and thus $[z,p]$ and $[z,p']$ are either horizontal or vertical).
As $P\in VH(.,\frac12)$, it is clear that
$[z,p]\bot [z,p']$.
We introduce the coordinate system with $z$ in the origin such that $p=(\epsilon,0)$ and $p'=(0,\epsilon)$ (this may change the orientation of the plane, in which case we temporary change the numbering of the vertices of the polygon $P$). We introduce $p^*=(\epsilon,\epsilon)$ so that $z,p,p^*,p'$ are the vertices of a square of the sidelength $\epsilon$ which we will call $R$ (see Figure 15). We will show next that one of the two curves into which points $p$ and $p'$ divide $\partial P$
is contained in $\overline{R}$. We denote this curve $\gamma_R$ and we will show that $z\in m_\epsilon(p)$ for $p\in\gamma_R$ and $z\notin m_\epsilon(p)$ for $p\in\partial P\setminus\gamma_R$.

Notice that $\tau \bot [z,p]$ and thus is vertical. The point $p$ may be an interior point of $\tau$ or its end. In the later case if $\tau$ is in the lower half-plain or has zero-length, the angle at $p$ is at least $270^\circ$ and $p'\in Q(p,P)\subset P$ (see Figure 16-ab) which would be impossible. So in both cases some part of $\tau$ is located in the upper half-plane, i.e. $\tau\cap \overline{\mathbb{R}\times \mathbb{R}_+}\supsetneq \{p\}$.

The point $p^*$ could not be an interior point of $\tau$
because otherwise $[z,p']\subset Q(\tau,P)$. So the end of $\tau$, which we denote by $p_1$, is either inside $(p,p^*)$ or $p_1=p^*$.

If the angle at $p_1$ is greater than $90^\circ$ then $p'\in Q(p_1,P)$. Therefore the angle between $\tau$ and the next side of $P$ (which we denote $\tau_1$) is $90^\circ$.

If $p_1\neq p^*$, denote by $p_2$ the second end of
the horizontal side $\tau_1$ (see Figure 17). The side $\tau_1=[p_1,p_2]\subset \overline R$ because all points of $(z,p')$ are
the interior points of $P$ and so $\tau_1\cap (z,p')=\emptyset$.

Hence $|\tau_1|<\epsilon<\frac12$ and as $P\in VH(.,\frac12)$ the angle at $p_2$ is not $90^\circ$.
Also the angle at $p_2$ is not $360^\circ$, as otherwise $p'\in Q(p_2,P)$.
Thus the angle at $p_2$ is $270^\circ$. Denote the
second end of the vertical side $\tau_2$ starting at $p_2$ by $p_3$ (see Figure 17).

If $p_3\not\in [p^*,p']$ we apply the previous arguments
once again. The only possibility is that the angle at $p_3$ is $90^\circ$, and at the next corner, denoted $p_4$ the angle is $270^\circ$, etc. Ultimately, after  finitely many (say $k$) such steps and not later than we reach $\tau'$, we have reached $[p^*,p']$.

This means that $p_k\in [p^*,p']$ (which includes the
case $p_1=p^*$). When it happens the side $[p_{k-1},p_k]$ is vertical, so the angle at $p_k$ is $90^\circ$. By $p_{k+1}$ we denote the next vertex of $P$. If $p_{k+1}\in (p',p*)$ then the angle at $p_{k+1}$ is not $90^\circ$ (because every side shorter than $\frac12$ is adjoint to at least one angle greater than $180^\circ$ due to $P\in VH(.\frac12)$),
and the angle at $p_{k+1}$ can not be $270^\circ$ or $360^\circ$ (because otherwise $p'\in Q(p_{k+1},P)$). Thus, $p'\in [p_k,p_{k+1}]$ and the curve $\gamma_R\subset\partial P$ connecting $p$ and $p'$ is contained in $\overline{R}$. Moreover, we have established that the angles between the sides composing the curve $\gamma_R$ alternate between $90^\circ$ and $270^\circ$.

As $P\in VH(.\frac12)$, the domain bounded by $\gamma_R\cup [z,p]\cup [z,p']$ is interior to $P$ and, thus, no segment connecting $z$ and a point of $\gamma_R$ intersects $\partial P$. This means that the  interior distance from $z$ to points of $\gamma_R$ coincide with the Euclidean one.

Now, to finish the consideration of the case when $[z,p]\bot\tau$ and
$[z,p']\bot\tau'$, it remains to show that
$m_\epsilon(r)=\{z\}$ for $r\in\gamma_R$ and
$z\notin m_\epsilon(r)$ for $r\notin\gamma_R$.

Let $r\in \gamma_R\subset \overline{R}$, $r\neq p,p'$ and
$y\in S_\epsilon$, $y\neq z$. As every point of $\overline{R}\setminus\{z,p^*\}$ is at the distance less than $\epsilon$ either from $p$ or $p'$, $y\not\in \overline{R}$.
This means that there is a curve $\gamma$ of length no more than $d(z,r)$, which connects $r$ and $y$ inside $\overline{P}$ (we consider $\overline{P}$ in the sense of the internal metric on the covering manifold of $P$).

Let $q\in\gamma$ be the last point of $\gamma$ which belongs to the domain bonded by $[z,p]\cup [z,p']\cup \gamma_R$. Clearly such a point exists and belongs to $[z,p]\cup [z,p]$. W.l.o.g. we may assume that $q\in [z,p]$. Point $y$ has to be outside of the disc $D(p,\epsilon)$. But, if $q\neq p$, since $q$ is on the radius $[p,z]$ of $D(p,\epsilon)$, the most close to $q$ point outside of the disc $D(p,\epsilon)$ is $z$. Thus $\gamma$ is strictly longer than $\gamma(r,q)\cup [q,z]$, where $\gamma(r,q)$ is the part of $\gamma$ connecting $r$ and $q$. On the other hand, if $q=p$, then
$d_P(r,y)= d_P(r,p)+d_P(p,y)\geq d_P(r,p)+d_P(p,z)>d_P(r,z)=d(r,z)$. Thus, in both cases, $d_P(r,y)>d_P(r,z)$, so $y\notin m_\epsilon(r)$, which implies $m_\epsilon(r)=\{z\}$.

We are going to show now that $z\notin m_\epsilon(q)$  for
point $q\in \partial P\setminus\gamma_R$. To this end, notice that there exist
some small disc $D(z,r)$ such that:
$$
D(z,r)\cap S_\epsilon(\tau)\setminus \overline{R}\quad\mbox{and}\quad
D(z,r)\cap S_\epsilon(\tau')\setminus \overline{R}
$$
are contained in $S_\epsilon$ (depending on
whether $p$ and $p'$ are vertices of $P$ or not those sets may be arcs and/or segments, see Figure 18).

We use the following
geometrical observation. Suppose that $\gamma$ is a curve joining $z$ with some point
$q\not\in \gamma_R$. We will show that there exists another point $z'\in D(z,r)\cap S_\epsilon$ and
a curve $\gamma'$ joining $z'$ with $q$ which is shorter then $\gamma$. Observe first, that, as $D(z,\epsilon)\subset P$, the curve $\gamma\cap D(z,\epsilon)$ is a straight segment which is not contained in the I'st or III'd quarters (the first because $q\not\in \gamma_R$, the second because $P\in VH(.,\frac12)$).

Now, consider the case when
$\gamma\cap D(z,r/2)$ contains a point $q'=(x,y)$ with one of the coordinates, say $x$, negative.
Choose then a point $z'\in (D(z,r)\cap S_\epsilon(\tau'))\setminus \overline{R}$, such that its first coordinate is $x$. Replace then the part of
$\gamma$ which joints $q'$ with $z$ by the segment $[q',z']$ (call the obtained curve $\gamma'$). $[q',z']$ is either a cathetus or a part of a cathetus in a triangle where $[q',z]$ is the hypothenuse (when the part of $S_\epsilon$ is a segment, see Figure 19-ab), or $d(z', q')<|y|<|x|<d(q',z)$ (when the part of $S_\epsilon$ is an arc, see Figure 20). In both cases the curve $\gamma'$ is shorter than the curve $\gamma$, which implies that $z$ is not a most close to $q$ point of $S_\epsilon$.

It remains to consider a case when (the one of $\gamma\supset [p',z]$ identical). The angle at $p$ can not be $90^\circ$ as $P\in VH(.,\frac12)$.

If the angle at $p$ is $270^\circ$, then $\gamma\supset [p,z]$ is possible only when $p$ is a vertex of $P$, and $\gamma$ contains also a part of the non-zero side $\tau^*$ adjoined to the side $\tau$. Consider a point $q'$, $0<d(q',p)<\epsilon$ such that $q'\in \gamma\cap \tau^*$.
Take a point $z'\in S_\epsilon(\tau)\cap D(z^*,r)\setminus \overline{R}\subset S_\epsilon$. The interval $(q',z')\subset P$ is sorter than $d(q',z)$ which shows that $\gamma$ is not a most short way from $q$ to $S_\epsilon$.

If the angle at $p$ is $360^\circ$, denote the point of intersection of the continuation of $\tau$ through $p$ with $S_\epsilon(\tau)$ by $z^*$. As in the proof of Lemma \ref{Senoisolated}, we may choose such a small radius $r$ that $D(z^*,r)\cap S_\epsilon(\tau)\subset S_\epsilon$. But then there exists a point $z'\in D(z^*,r)\cap S_\epsilon(\tau)$ with $x$-coordinate greater than $\epsilon$ (see Figure 21). Then $[q',z']\subset P$ and $d(q',p)+d(p,z)>d(q',z')$, i.e. $d_P(q,z)>d_P(q',z')$ which shows that $\gamma$ is not a most short way from $q$ to $S_\epsilon$.

Thus, we have shown that $z\not\in m_\epsilon(q)$ if $q\not\in \gamma_R$.

Summarizing the proof - so far we have shown that if
$[z,p]\bot\tau$ and $[z,p]\bot\tau'$, then $\partial P$ can be presented as a union of two curve connecting $p$ and $p'$. One of them $\gamma_R$ has the property that $m_\epsilon(r)=\{z\}$ for its interior points $r$  and $z\notin m_\epsilon(q)$ for all $q\in\partial P\setminus \gamma_R$.

Rest of the cases are rather similar to the one described, so we will omit some details. To show the change in the argument we will consider a case when $[z,p]\not\bot \tau$ and $[z,p']\not\bot \tau'$ (and leave the rest of the cases to the reader).

Recall that if $d(z,p)=\epsilon$ and $[z,p]\not\bot \tau$ then $p$ is an endpoint of $\tau$. As well, $p'$ is an endpoint of $\tau'$ and the segments
$[p,z]$, and $[p',z]$ make obtuse angles to the corresponding sides $\tau$ and $\tau'$ (and the angles at $p$ and $p'$ are at least $270^\circ$).

As $z\in Q(p,P)$ and $z\in Q(p',P)$, $Q(p,P)\cap Q(p',P)\neq \emptyset$. But then, as $d(p,p')\leq 2\epsilon$, unless $Q(p,P)$ and $Q(p',P)$ is oriented in the same way, either $p\in Q(p',P)$ or $p'\in Q(p,P)$. So, the mutual position of $z, p, p', \tau$, and $\tau'$ is the one of indicated in Figure 22 up to a symmetry or rotation. Notice that in all cases the vectors $\overrightarrow{zp}$ and $\overrightarrow{zp'}$ are in the same quadrant, which we by changing the coordinate system can assume to be the I'st quadrant.

Shift, now the origin of the coordinate system in such a way that $p$ and $p'$ are on the positive axes. Wlog we may assume $p=(x,0), p'=(0,y)$. With this placement of the coordinates $z$ is in the III quadrant. As before, we consider a rectangle $R=P((0,0),(x,0),(x,y),(0,y))$ and show that there is a sub-curve of the boundary $\partial P$, the curve $\gamma_R$ connecting $p$ and $p'$, such that $\gamma_R\subset \overline{R}$. Similarly, as above, we observe that $d_P(z,r)=d(z,r)$ for $r\in\gamma_R$.

If $d_P(r,y)\leq d_P(r,z)$ for some $r\in\gamma_R$,
we again consider a curve $\gamma$ which realizes the shortest way between $r$ and $y$ and consider $q\in\gamma\cap([z,p]\cup [z,p'])$. This leads to a contradiction in exactly the same way as before.

Some difference arises when we look at $\partial P\setminus \gamma_R$. Indeed there may even exist other point $p''$ for which $d(z,p'')=\epsilon$.

Let us shift the origin to $z$ (preserving the direction of the axes). Both $p$ and $p'$ are in the I quadrant. Consider any other point $p''\in \partial P$ at the distance $\epsilon$ from $z$. If we repeat the argument for the pair $(p,p'')$ it will lead to the same conclusion, i.e. $p''$ also is in the I quadrant (if $[z,p'']\bot \tau''$, then $p''$ has to be on the positive half of one of the axis). Thus, we know that all the points of $q\in\partial P$ such that $d_P(z,q)=\epsilon$  are in the I quadrant. We take as $p$ and $p'$ the pair of such a points for which the angle $\angle( pzp')$ is maximal, and the curve $\gamma_R$ is the part of $\partial P$ connecting $p$ and $p'$, as before.

We have already shown that for every interior point of $\gamma_R$ the point $z$ is the only most close point of $S_\epsilon$.

It remains to show that $z\notin m_\epsilon(q)$ for $q\in\partial P\setminus \gamma_R$. Indeed, if $z\in S_\epsilon(\tau'')$ for some side $\tau''$, then there exists $p''\in \tau''$ such that $d(z,p'')=\epsilon$, so $p''\in \gamma_R$, i.e. $\tau''\cap \gamma_R\neq\emptyset$. This means that $d(z,N_\epsilon(\tau^*))>0$  for any side $\tau^*$  disjoint with $\gamma_R$, and we again can find so small disc $D(z,r)$ such that $S_\epsilon\supset D(z,r)\cap ((S_\epsilon(\tau)\setminus D(p',\epsilon))\cup (S_\epsilon(\tau')\cap D(p,\epsilon)))$. Therefore, in the case when neither $[z,p]\bot\tau$, nor $[z,p']\bot\tau'$ we see that $S_\epsilon\cap D(z,r)$ is a union of two arcs ending in $z$, which are perpendicular (as smooth curves) to $[z,p]$ and $[z,p']$ respectively (see Figure 23).

Hence, as the last part of the shortest curve connecting $q$ to $z$ inside $\overline{P}$ is a segment, if it has an angle to one of the arcs less than $90^\circ$ there is a point $z'$ on the arc, and a curve connecting it to $q$ which is shorter. The only way for a most short curve joining $q$ with $S_\epsilon$ to end in $z$ is for its last segment to be contained in the angle $\angle pzp'$. But, as $q$ does not belong to the region bounded by $\gamma_R\cup [z,p]\cup [z,p']$, the shortest way from $q$ to $z$ can not pass inside the region.

This leaves us to consider the cases when $[z,p]\subset \gamma$ or $[z,p']\subset \gamma$, which can be treated in exactly the same manner as before (both in the cases of the angle $270^\circ$ and $360^\circ$).
\end{proof}

\begin{corollary}\label{structure.distsegment} The interior distances between points of $\partial P$ and the points of $m_\epsilon(p)$ coincide with the
Euclidean distances and are realized by a line segment contained in the interior of $P$, except from one of its ends.
\end{corollary}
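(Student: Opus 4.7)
The plan is to deduce the corollary directly from the geometric analysis carried out inside the proof of Lemma \ref{structur.closesttomany}. Fix $p\in\partial P$ and $z\in m_\epsilon(p)$. First I would identify $p$ as lying on one of the maximal arcs produced by that lemma: the set of parameters $t$ with $z\in m_\epsilon(\widetilde\phi(t))$ is a closed arc $[t_0,t_1]\subset[0,4n]/\{0\sim 4n\}$, whose endpoints $p_0=\widetilde\phi(t_0)$ and $p_1=\widetilde\phi(t_1)$ satisfy $d(z,p_0)=d(z,p_1)=\epsilon$. Write $\gamma_R=\widetilde\phi([t_0,t_1])$; then $p\in\gamma_R$.

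Next I would record the key geometric fact extracted from the proof of Lemma \ref{structur.closesttomany}: namely, $\gamma_R$ is a staircase with alternating $90^\circ$ and $270^\circ$ turns inscribed in the rectangular pocket with vertices built from $z,p_0,p_1$, so that the closed region $D$ bounded by $\gamma_R\cup[z,p_0]\cup[z,p_1]$ lies in $\overline P$, its interior lies in $P^\circ$, and $D$ is star-shaped with respect to $z$. The relevant sentence inside the proof of Lemma \ref{structur.closesttomany} is ``no segment connecting $z$ and a point of $\gamma_R$ intersects $\partial P$'', together with the description of $\gamma_R$ as a union of sides whose interior angles alternate between $90^\circ$ and $270^\circ$.

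From the star-shapedness of $D$, for any $p\in\gamma_R$ the segment $[z,p]$ is contained in $\overline{D}\subset\overline P$, with $[z,p)\subset D^\circ\subset P^\circ$; hence $d_P(z,p)\le|[z,p]|=d(z,p)$, and since $d_P\ge d$ trivially, equality follows and is realized by the straight segment $[z,p]\subset P^\circ\cup\{p\}$. I expect the only step requiring care is to assemble the star-shapedness of $D$ as a single unified statement from the case analysis in the preceding lemma: in each of the mutual positions of $[z,p_0]$ and $[z,p_1]$ relative to their adjoint sides (either perpendicular to a side, or emanating from a vertex of interior angle at least $270^\circ$), one must check that the staircase $\gamma_R$ bends only ``inward'' toward $z$, so that $D$ remains a pocket inside $P$ and the radii $[z,p']$, $p'\in\gamma_R$, never cross $\partial P$. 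This is the main technical point, but it is already built into the proof of Lemma \ref{structur.closesttomany}, so the corollary essentially amounts to repackaging what has just been established.
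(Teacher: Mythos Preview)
Your argument covers only the case handled by Lemma~\ref{structur.closesttomany}, namely when $z\in m_\epsilon(p)\cap m_\epsilon(q)$ for some $q\neq p$. You invoke that lemma to produce the arc $[t_0,t_1]$ and the pocket $D$, and then your star-shapedness argument is fine and essentially matches how the paper treats that case. But Lemma~\ref{structur.closesttomany} has $p\neq q$ as a hypothesis; it says nothing when $z$ lies in $m_\epsilon(p)$ for a \emph{single} $p$. In that situation the set of parameters $t$ with $z\in m_\epsilon(\widetilde\phi(t))$ collapses to $\widetilde\phi^{-1}(p)$, and you have no justification for the key assertion $d(z,p_0)=d(z,p_1)=\epsilon$ (here $p_0=p_1=p$), nor for the existence of a pocket $D$ containing the segment $[z,p]$.

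The paper fills this gap with a short direct argument: since $z\in S_\epsilon$ there is some $p^*\in\partial P$ with $d(z,p^*)=\epsilon$, and since every point of $S_\epsilon$ is at distance $\epsilon$ from $\partial P$ one has $d(p^*,S_\epsilon)=\epsilon$, so $z\in m_\epsilon(p^*)$; the uniqueness assumption forces $p^*=p$, hence $d(p,z)=\epsilon=d(z,\partial P)$ and the open segment $(p,z]$ lies in $P$. You should add this case before invoking Lemma~\ref{structur.closesttomany} for the remaining one.
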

\begin{proof} Let $p\in \partial P$ and $z\in m_\epsilon(p)$. Suppose that $z\notin m_\epsilon(p')$ for any $p\neq p'\in\partial P$. Since $z\in S_\epsilon$, there exists $p^*(z)=p^*\in \partial P$ for which $d(z,p^*)=\epsilon$, while by the definition of $S_\epsilon$ we know $d(p^*,S_\epsilon)=\epsilon$. So $z\in m_\epsilon(p^*)$. Hence $p^*=p$ and $d(p,z)=\epsilon=d(z,\partial P)$, thus $(p,z]\subset P$.

If $z\in m_\epsilon(p)\cap m_\epsilon(p'')$ for two different points of $\partial P$ we use the information from the previous lemma.
\end{proof}

\begin{corollary}\label{structure.nonintersectingconnections}The segments realizing the shortest distances from the
points of $\partial P$ to $S_\epsilon$ do not intersect except
their ends.
\end{corollary}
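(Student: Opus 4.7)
The plan is to suppose that two such segments $[p_1,z_1]$ and $[p_2,z_2]$, each realizing $d(p_i,S_\epsilon)=\epsilon$ via a Euclidean segment by Corollary \ref{structure.distsegment}, meet at a point $q$ that is not a common end, and derive a contradiction. The argument splits naturally into three cases, and uses nothing more than the defining property $d(p,S_\epsilon)=\epsilon$ for $p\in\partial P$ together with the fact (from Corollary \ref{structure.distsegment}) that the open segment $(p,z)$ lies entirely in $P^\circ$.

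First I would dispose of the hybrid case in which $q$ is interior to one segment, say $q\in(p_1,z_1)$, but is an endpoint of the other. If $q=p_2$, then $p_2\in\partial P$ would lie in $(p_1,z_1)\subset P^\circ$, which is impossible. If $q=z_2$, then $d(p_1,z_2)<d(p_1,z_1)=\epsilon$ contradicts $z_2\in S_\epsilon$ and $d(p_1,S_\epsilon)=\epsilon$. So either $q$ is interior to both segments, or $q$ is a common endpoint of some kind (and we must argue the latter forces $q$ to be a shared end).

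Next I would handle the main case where $q$ is interior to both segments. The key device is a symmetric pair of triangle inequalities
$$
d(p_1,z_2)\le d(p_1,q)+d(q,z_2),\qquad d(p_2,z_1)\le d(p_2,q)+d(q,z_1),
$$
together with $d(p_1,q)+d(q,z_1)=\epsilon=d(p_2,q)+d(q,z_2)$ and the lower bound $d(p_i,z_j)\ge\epsilon$. Summing forces equality throughout, so $q$ is collinear with $p_1$ and $z_2$, and collinear with $p_2$ and $z_1$. Hence $z_1$ and $z_2$ both lie on the ray from $p_1$ through $q$ at Euclidean distance $\epsilon$, giving $z_1=z_2=:z$. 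But then $[p_1,z]$ and $[p_2,z]$ would be two distinct segments through the two distinct points $z$ and $q$, which is impossible unless they coincide; as both have length $\epsilon$ and end at $z$, this forces $p_1=p_2$, contradicting the assumption that we had two distinct segments.

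Finally, the shared-endpoint configurations collapse to the same elementary circle observation: two segments issuing from a common point $p$ and ending on the circle $\{z:d(p,z)=\epsilon\}$ can meet only at $p$ unless they coincide, and symmetrically for segments sharing an $S_\epsilon$-endpoint $z$ with the other endpoint on $\{p:d(z,p)=\epsilon\}$. The only real obstacle I foresee is keeping the case split tidy; no fresh geometric input beyond Corollary \ref{structure.distsegment} and the definition of $S_\epsilon$ is required.
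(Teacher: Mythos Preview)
Your argument has a genuine gap: you assume throughout that every segment realizing the shortest distance from a point $p\in\partial P$ to $S_\epsilon$ has length exactly $\epsilon$. This is false. Corollary~\ref{structure.distsegment} only says that $d_P(p,S_\epsilon)$ equals the Euclidean distance and is realized by a straight segment contained (except for one end) in $P$; it does not say that this distance is $\epsilon$. In fact Lemma~\ref{lem.4eps} gives only the bound $d_P(p,S_\epsilon)<4\epsilon$, and Lemma~\ref{structur.closesttomany} exhibits points (the interior corners of the staircase curve $\gamma_R$) for which $m_\epsilon(p)=\{z\}$ with $d(p,z)$ strictly between $\epsilon$ and $\sqrt 2\,\epsilon$. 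For such a $p$ your identity $d(p_1,q)+d(q,z_1)=\epsilon$ fails, and the symmetric summing argument no longer forces equality in the triangle inequalities.

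The paper's proof sidesteps this entirely. Rather than balancing both triangle inequalities against the common value $\epsilon$, it simply assumes w.l.o.g.\ $d(q,z)\ge d(q,z')$, observes that the broken path $[p,q]\cup[q,z']$ lies in $\overline P$ (except for $p$) and has length at most $d(p,z)$, and then shortcuts the corner at $q$ inside $P$ to obtain $d_P(p,z')<d_P(p,z)$, contradicting $z\in m_\epsilon(p)$. No information about the actual length of the segments is needed. Your approach can be repaired along similar lines---replace $\epsilon$ by $\ell_i=d(p_i,z_i)$ and use $d_P(p_i,z_j)\ge\ell_i$ together with the fact that the concatenated path lies in $P$---but as written the proof is incomplete.
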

\begin{proof} Let $[z,p]$, respectively $[z',p']$, realize the shortest distance from $p$, respectively $p'$, to $S_\epsilon$.

If $[z,p]\cap [z',p']=z\neq z'$, then $d(z,p')<d(z',p')$ and $[z',p']$ is not a shortest path from $p'$ to $S_\epsilon$. Similarly it can not be that $[z,p]\cap [z',p']=z'\neq z$, $[z,p]\cap [z',p']=p\neq p'$, or $[z,p]\cap [z',p']=p'\neq p$.

Suppose $[z,p]\cap[z',p']=q$ for some $q\neq z,p,z',p'$ where $z\in m_\epsilon(p)$ and $z'\in m_\epsilon(p')$. Wlog we may assume $d(q,z)\geq d(q,z')$. Then the length of the curve $\gamma=[p,q]\cup [q,z']$ does not exceed $d(z,p)$ and $\gamma$ joins $p$ with $S_\epsilon$. As $\gamma$ is not a straight line and $\gamma\setminus \{p\}\subset P$, we can find a shorter curve connecting $p$ and $z'\in S_\epsilon$ inside $P$, i.e. $d_P(p,S_\epsilon)<d(p,z)$ and $z\not\in m_\epsilon(p)$. This contradiction proves the corollary.
\end{proof}

\begin{lemma}\label{structure.cornercase} Let $p\in \partial P$ and
$z,z'\in m_\epsilon(p)$ for some $z\neq z'$. Then $p$ is a vertex of $P$ and the angle at $p$ is at least $270^\circ$. Moreover, $m_\ve(p)$ is a connected arc of a circle of radius $\epsilon$.
\end{lemma}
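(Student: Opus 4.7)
The plan is to dispatch each of the three claims by a case analysis that uses, at every stage, the rectangle or square provided by the $VH(3,\frac12)$-structure of $P$ to force geometric uniqueness of the closest $S_\epsilon$-point to $p$. By Corollary \ref{structure.distsegment} I set $\rho:=d_P(p,z)=d(p,z)=d_P(p,z')=d(p,z')=d_P(p,S_\epsilon)$, with both segments $[p,z]$ and $[p,z']$ lying in $\overline{P}$ apart from the endpoint $p$; since $d(w,\partial P)=\epsilon$ for every $w\in S_\epsilon$ and $p\in\partial P$, we always have $\rho\ge\epsilon$.

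Suppose first that $p$ is an interior point of a side $\tau$. Property (5) supplies a perpendicular rectangle $Q(\tau,P)\subset P$ of depth $\frac12>\epsilon$, so the inward-normal point $p+\epsilon n$ lies in $S_\epsilon$ and forces $\rho=\epsilon$. But then $d(z,\tau)\le d(z,p)=\epsilon$ while $d(z,\tau)\ge d(z,\partial P)=\epsilon$, so $p$ is the closest point of $\tau$ to $z$; since $p$ is interior to $\tau$ this forces $[p,z]\perp\tau$, pinning $z$ down uniquely and contradicting $z\ne z'$. Assume next that $p$ is a vertex with angle $90^\circ$, and set coordinates so the two adjacent sides $\tau_1,\tau_2$ lie along the positive $x$- and negative $y$-axes with $P$ occupying, locally, the fourth quadrant. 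Writing $z=(\rho\cos\alpha,-\rho\sin\alpha)$, the constraints $d(z,\tau_i)\ge\epsilon$ (valid perpendicular distances thanks to the rectangles $Q(\tau_i,P)$ of width $\frac12$) give $\rho\sin\alpha\ge\epsilon$ and $\rho\cos\alpha\ge\epsilon$, hence $\rho\ge\epsilon\sqrt{2}$ with equality only at $\alpha=\pi/4$; so $m_\epsilon(p)$ is again a singleton, contradicting $z\ne z'$. Since the zero-length $180^\circ$ sides were already removed in the preamble to Lemma \ref{Senoisolated}, the only remaining angles at $p$ are $270^\circ$ and $360^\circ$.

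For the arc claim, I would exploit property (7) (or (8) in the $360^\circ$ case): the closed square, respectively the union of two adjacent squares, $Q(p,P)$ has side $\frac12$, lies in $\overline{P}$, and meets $\partial P$ only at $p$. For every $z\in C(p,\epsilon)\cap Q(p,P)^\circ$ the segment $[p,z]$ is contained in $Q(p,P)$, and since $\partial P\setminus\{p\}$ lies strictly outside $Q(p,P)$ we have $d(z,\partial P\setminus\{p\})\ge\frac12-\epsilon>\epsilon$; thus $d(z,\partial P)=\epsilon$ and $z\in S_\epsilon\cap m_\epsilon(p)$ (this also shows $\rho=\epsilon$ in the present case). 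The intersection $C(p,\epsilon)\cap Q(p,P)^\circ$ is an open arc of angular extent $90^\circ$ or $180^\circ$, and since $m_\epsilon(p)$ is closed in $\rr$ (as $S_\epsilon$ is closed and $d(p,\cdot)$ is continuous) it contains the corresponding closed arc. For the reverse inclusion, any $z\in C(p,\epsilon)$ lying in the $270^\circ$ or $360^\circ$ interior cone of $p$ but strictly outside this opposite-quadrant arc satisfies $d(z,\tau_i)<\epsilon$ for one of the adjoining sides, so $z\notin S_\epsilon$. The main obstacle is verifying this last inclusion when the sides $\tau_1,\tau_2$ adjoining $p$ are very short; the perpendicular rectangles $Q(\tau_i,P)$ of width $\frac12$ attached to every side control the relevant parallel-offset segments of $S_\epsilon$ and keep them close enough to the corresponding axes to force $d(z,\partial P)<\epsilon$ for any such $z$. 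This identifies $m_\epsilon(p)$ with the closed $90^\circ$ or $180^\circ$ arc of $C(p,\epsilon)$, a connected arc of radius $\epsilon$ as asserted.
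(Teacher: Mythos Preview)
There are two genuine gaps. First, your reduction to $\rho=\epsilon$ in the interior (and in the $90^\circ$) case is not justified: the claim that $p+\epsilon n\in S_\epsilon$ requires $d(p+\epsilon n,\partial P)\ge\epsilon$, and this fails whenever another side of $\partial P$ comes within $\epsilon$ of that point. For instance, if $p$ lies on a side $\tau$ at distance $<\epsilon$ from a $90^\circ$ corner, the adjacent side sits at distance $<\epsilon$ from $p+\epsilon n$; in that situation one actually has $\rho>\epsilon$. The paper handles this differently: it first uses Lemma~\ref{structur.closesttomany} to show that if $d(p,z)>\epsilon$ then $m_\epsilon(p)=\{z\}$, hence $d(p,z)=\epsilon$ whenever $m_\epsilon(p)$ contains two points, and only afterwards runs the vertex/interior case analysis.

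Second, your identification of $m_\epsilon(p)$ with the \emph{full} closed $90^\circ$ (or $180^\circ$) arc is incorrect, and the bound $d(z,\partial P\setminus\{p\})\ge \tfrac12-\epsilon$ is the culprit. A point $z$ at distance $\epsilon$ from the corner $p$ of the square $Q(p,P)$ is at distance at most $\epsilon/\sqrt2$ from the two sides of $Q(p,P)$ through $p$; another portion of $\partial P$ may sit just outside those sides and force $d(z,\partial P)<\epsilon$. Concretely, if the side $\tau_2$ adjoining $p$ has length $<\epsilon$ and the next side turns (at a $90^\circ$ angle) back over the opposite quadrant, points of the arc near the corresponding endpoint are knocked out of $S_\epsilon$. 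So $m_\epsilon(p)$ can be a \emph{proper} sub-arc of the quarter-circle. The paper does not attempt the inclusion you try; instead it writes $m_\epsilon(p)=\gamma_p\setminus\bigcup_{q\in\partial P} D(q,\epsilon)$ and observes that for $q$ in the opposite quadrant $D(q,\epsilon)$ misses $\gamma_p$ (since $q\notin Q(p,P)$), while for $q$ outside that quadrant $D(q,\epsilon)\cap\gamma_p$ is an arc containing one endpoint of $\gamma_p$. Hence what remains is connected, which is all the lemma asserts.
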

\begin{proof} Assume that $d(p,z)>\epsilon$. Then
there exists $p'\in \partial P$, such that $d(p',z)=\epsilon$. This means that there are two different points $p,p'\in\partial P$ for which $z\in m_\epsilon(p)\cap m_\epsilon(p')$.
Applying Lemma \ref{structur.closesttomany} we get $p\in
\tilde{\phi}[t_0,t_1]$ and $p\neq \tilde{\phi}(t_0),\tilde{\phi}(t_1)$ because $d(p,z)>\epsilon$. But then  $S_\epsilon(p)=\{z\}$. This
contradiction shows that $d(p,z)=d(p,z')=\epsilon$.

If $p$ were an interior point of a side $\tau\subset\partial P$, then there exist exactly
two points outside $N_\epsilon(\tau)$ at the distance $\epsilon$ from $p$. Only one of them can be connected to $p$ by a segment described in the Corollary \ref{structure.distsegment}. Any other
point of $S_\epsilon$ will have the distance to $p$ greater than
$\epsilon$. This means that there is only one closest to $p$ point
of $S_\epsilon$ and the contradiction shows that $p$ has to be an endpoint of a side, i.e. a vertex.

Let $p$ is an endpoint point of a side $\tau$. We already  know that $d(p, S_\epsilon)=\epsilon$. This means that $m_\epsilon(p)$ is contained in the semicircle with the
centra in $p$ which is a part of $S_\epsilon(\tau)$. Let us denote this semicircle by $\gamma_p$.

If the angle at $p$ is $90^\circ$ the line segment which connects $p$ to $z\in S_\epsilon(p)$  by Corollary \ref{structure.distsegment} forms an acute angle with both sides of $P$ ending in $p$. Thus there are points of $\partial P$ which are closer to $z$ than $p$. This contradicts $d(z,p)=\epsilon$. Thus the angle at $p$ is at least $270^\circ$.

Since $S_\epsilon=\big(\cup(S_\epsilon(\tau')\setminus(\bigcup\limits_{\tau''\neq \tau'}N_\epsilon(\tau'')))\big)$, where the first union is taken over all  sides of $P$, we have $m_\epsilon(p)=\gamma_p\setminus (\bigcup\limits_{\tau''\neq \tau'}N_\epsilon(\tau''))=\gamma_p\setminus (\partial P + D(0,\epsilon))$.
Let $\mathcal{Q}\subset \partial P$ be a countable dense subset. Then $\partial P + D(0,\epsilon)=\bigcup\limits_{q\in \mathcal{Q}}D(q,\epsilon)$. We will show that $\gamma_p\setminus \bigcup\limits_{q\in \mathcal{Q}}D(q,\epsilon)$ is a connected arc.

Let us first consider the case when the angle at $p$ is $270^\circ$. Denote the
other side of $P$ ending in $p$ by $\nu$ (as we consider the covering of $P$ this side is uniquely defined).
W.l.o g. we may assume that
$p$ is in the origin of cartesian coordinates, $\tau$ goes along the positive $x$-axis, and
$\nu$ goes along the positive $y$-axis. Then $\gamma_p\setminus
N_\epsilon(\nu)$ is contained in a quart-circle $C$ in the third
quadrant. What part of this quart-circle belongs to $S_\epsilon$
depends from what is left after we take away all the discs $D(q,\epsilon),q\in\mathcal{Q}$.
Notice that if $q\in \partial P$ is a point of the third quadrant, then $q\notin Q(p,P)$ and $D(q,\epsilon)\cap \gamma_p=\emptyset$
(by property (vii) of $P\in VH(.,\frac12)$)). On the other hand, if the point $q$ is outside of the third quadrant then $D(q,\epsilon)\cap \gamma_p$ is a connected arc containing one of ends of $C$.
This means that the union of all intersections $D(q,\epsilon)\cap \gamma_p$ forms
two arcs containing the corresponding ends of $\gamma_p$ and what left after removing
of these arcs is a connected arc.

If the angle at $p$ is $360^\circ$ we may w.l.o.g. assume that the sides
go along the positive part of the $x$-axis. Then similar
reasoning can be applied with replacing the quart-circle by the
semicircle for $\gamma_p$ and the third quadrant by the left half-plane.
\end{proof}

We we are going to define now the continuous function $\widetilde{\psi}:[0,4n]\rightarrow S_\epsilon$, such that $\widetilde{\psi}(t)\in m_\epsilon(\widetilde{\phi}(t))$.
This condition defines $\widetilde{\psi}$ uniquely on the
intervals $(2k,2k+1)$, because,  by Lemma \ref{structure.cornercase}, all points of the sides of $P$ except vertices have exactly one closest point of $S_\epsilon$.

\begin{lemma}\label{psi:side} The function $\widetilde{\psi}$ is continuous on $(2k,2k+1)$.
\end{lemma}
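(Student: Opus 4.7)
Fix $t_0\in (2k,2k+1)$ and a sequence $t_n\in(2k,2k+1)$ with $t_n\to t_0$. Set $z_n=\widetilde\psi(t_n)$ and $z_0=\widetilde\psi(t_0)$; we want to show $z_n\to z_0$. By Lemma \ref{structure:arcline} the set $S_\epsilon$ is a finite union of closed arcs and segments, hence compact, so $(z_n)$ has accumulation points and it suffices to prove that every accumulation point equals $z_0$. Let $z_*=\lim_{j} z_{n_j}\in S_\epsilon$ along some subsequence. Since $\widetilde\phi(t_{n_j})\to\widetilde\phi(t_0)$ in the Euclidean metric, continuity of the Euclidean distance yields $d(\widetilde\phi(t_{n_j}),z_{n_j})\to d(\widetilde\phi(t_0),z_*)$.

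The plan is to show that in fact $d(\widetilde\phi(t_0),z_*)=d_P(\widetilde\phi(t_0),S_\epsilon)$, so that $z_*\in m_\epsilon(\widetilde\phi(t_0))$; since $\widetilde\phi(t_0)$ is an interior point of a side of $P$, Lemma \ref{structure.cornercase} guarantees $m_\epsilon(\widetilde\phi(t_0))=\{z_0\}$, forcing $z_*=z_0$ and completing the argument. By Corollary \ref{structure.distsegment} each segment $[\widetilde\phi(t_n),z_n]$ lies in $\overline P$ with only the endpoint $\widetilde\phi(t_n)$ on $\partial P$, and $d_P(\widetilde\phi(t_n),z_n)=d(\widetilde\phi(t_n),z_n)=d_P(\widetilde\phi(t_n),S_\epsilon)$. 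Hence the proof reduces to the continuity statement
$$
d_P(\widetilde\phi(t_n),S_\epsilon)\to d_P(\widetilde\phi(t_0),S_\epsilon).
$$

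I expect the continuity of $p\mapsto d_P(p,S_\epsilon)$ along interior points of a single side to be the main (in fact the only) technical obstacle; everything else is a compactness-plus-uniqueness argument of the standard kind. For the upper bound, I would translate the realizing segment $[\widetilde\phi(t_0),z_0]$ (which is perpendicular to the side, as the proof of Lemma \ref{Senoisolated} shows for interior points of a side) along the side to start at $\widetilde\phi(t_n)$: for $n$ large the translated segment stays in $\overline P$ by the $VH(\cdot,\tfrac12)$ property (its interior is contained in a thin rectangle near the original segment, which by property (v) of $VH$-polygons lies in $P$), its far endpoint is at Euclidean distance $\epsilon$ from $\partial P$, and by continuity of the distance to $\partial P$ together with Lemma \ref{structure:arcline} it can be pushed slightly to land on $S_\epsilon$, giving $d_P(\widetilde\phi(t_n),S_\epsilon)\le\epsilon+o(1)$; an identical argument starting from $[\widetilde\phi(t_n),z_n]$ (perpendicular to the same side) yields the reverse bound $d_P(\widetilde\phi(t_0),S_\epsilon)\le d_P(\widetilde\phi(t_n),S_\epsilon)+o(1)$.

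Combining these two inequalities gives $d_P(\widetilde\phi(t_n),S_\epsilon)\to d_P(\widetilde\phi(t_0),S_\epsilon)$, hence $d(\widetilde\phi(t_0),z_*)=d_P(\widetilde\phi(t_0),S_\epsilon)$, so $z_*\in m_\epsilon(\widetilde\phi(t_0))=\{z_0\}$ as desired. Since every convergent subsequence of $(z_n)$ has limit $z_0$ and $S_\epsilon$ is compact, $z_n\to z_0$, proving continuity of $\widetilde\psi$ on $(2k,2k+1)$.
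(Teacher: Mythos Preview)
Your overall structure (compactness of $S_\epsilon$ plus uniqueness of the nearest point for interior points of a side) is exactly the paper's approach; the paper's proof is essentially your first two paragraphs. Where you diverge is in treating the continuity of $t\mapsto d_P(\widetilde\phi(t),S_\epsilon)$ as the ``main technical obstacle'' and attacking it via a translation argument.

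That detour is both unnecessary and not quite right. The perpendicularity claim---that $[\widetilde\phi(t_0),z_0]$ is orthogonal to the side---fails precisely in the situation of Lemma~\ref{structur.closesttomany}: when $z_0\in m_\epsilon(p')$ for some $p'\neq\widetilde\phi(t_0)$, the point $z_0$ sits at the corner of two pieces of $S_\epsilon$ and the realizing segment to $\widetilde\phi(t_0)$ can make an arbitrary angle with the side (and has length $>\epsilon$). Lemma~\ref{Senoisolated} analyzes the segment from a point of $S_\epsilon$ to its nearest boundary point, which is the reverse of what you need here. The continuity you want is immediate from the triangle inequality: for $t,t'\in(2k,2k+1)$ the segment $[\widetilde\phi(t),\widetilde\phi(t')]$ lies in the side $\tau\subset\overline P$, so $d_P(\widetilde\phi(t),\widetilde\phi(t'))=d(\widetilde\phi(t),\widetilde\phi(t'))$ and hence
\[
\bigl|d_P(\widetilde\phi(t),S_\epsilon)-d_P(\widetilde\phi(t'),S_\epsilon)\bigr|\le d_P(\widetilde\phi(t),\widetilde\phi(t')).
\]
This is all the paper uses. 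There is also a small gap in your step ``$d(\widetilde\phi(t_0),z_*)=d_P(\widetilde\phi(t_0),S_\epsilon)$, so $z_*\in m_\epsilon(\widetilde\phi(t_0))$'': you still need $d_P(\widetilde\phi(t_0),z_*)\le d_P(\widetilde\phi(t_0),S_\epsilon)$, but this follows from the same triangle inequality together with $d_P(z_{n_j},z_*)\to 0$ (the $z_{n_j}$ and $z_*$ lie in the interior $P$, where $d_P$ agrees locally with $d$). With these two fixes your argument collapses to the paper's.
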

\begin{proof} Let and $t_j\in (2k,2k+1)$, and $\lim t_j= t^*$, $t^*\in (2k,2k+1)$. Then $\widetilde{\phi}(t_j)
\rightarrow \widetilde{\phi}(t^*)$, and so $d(\widetilde{\phi}(t_j),\widetilde{\psi}(t_j)) =
d(\widetilde{\phi}(t_j), S_\epsilon)\rightarrow d(\widetilde{\phi}(t^*), S_\epsilon)$. Consider a limit point $z$ of the sequence $\widetilde{\psi}(t_j)$.
For this point holds $d(\widetilde{\phi}(t^*),z)=\lim d(\widetilde{\phi}(t_j),\widetilde{\psi}(t_j))
=d(\widetilde{\phi}(t^*),S_\epsilon)$ and $z\in S_\epsilon$, as the set $S_\epsilon$ is closed.

As $\widetilde{\psi}(t^*)$ is the unique point of $S_\epsilon$, which is at the distance $d(\widetilde{\psi}(t^*),S_\epsilon)$ of $\widetilde{\psi}(t^*)$,  it has to be $z=\widetilde{\psi}(t^*)$.
Thus, the sequence $\widetilde{\psi}(t_j)$ of points of the compact set $S_\epsilon$ has a unique limit point $\widetilde{\psi}(t^*)$, i.e. $\widetilde{\psi}$ is continuous in $t^*$.
\end{proof}

Let us now consider the vertices of $P$. If the angle at a vertex $p$ is less than $270^\circ$,
then the Lemma \ref{structure.cornercase} shows that the closest to $p$ point of $S_\epsilon$ is unique, and the same proof as in
Lemma \ref{psi:side} shows that setting $\widetilde{\psi}(t)$ for $t\in \widetilde{\phi}^{-1}(p)$ to the unique point of $m_\epsilon(p)$ we get the function $\widetilde{\psi}$ to be continuous in a neighborhood of $\widetilde{\phi}^{-1}(p)$.

It remains to consider the cases when the angle at $p$ is $270^\circ$ or $360^\circ$. In both cases
the set $m_\epsilon(p)$ is either a point or a (connected) arc by Lemma \ref{structure.cornercase}.
If the arc has degenerated to a point, then the above argument show that setting $\widetilde{\psi}$
to that point for $\widetilde{\phi}^{-1}(p)$ will define $\widetilde{\psi}$ continuously on $\widetilde{\phi}^{-1}(p)$ and in
its neighborhood. Non-degenerated case is described by the following lemma.

\begin{lemma}\label{psi:arc} If $p=\widetilde{\phi}([2k+1,2k+2])$ is a corner of $P$ with the intern angle at least
$270^\circ$ such that $m_\epsilon(p)$ is an arc $\gamma_p$, then
$\lim\limits_{t\rightarrow (2k+1)^-}\psi(t)$ and $\lim\limits_{t\rightarrow (2k+2)^+}\psi(t)$ exist and
are the endpoints of the arc $\gamma_p$. $\lim\limits_{t\rightarrow (2k+1)^-}\psi(t)\neq\lim\limits_{t\rightarrow (2k+2)^+}\psi(t)$.
\end{lemma}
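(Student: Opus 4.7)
The proof will be given for the limit at $(2k+1)^-$; the statement at $(2k+2)^+$ follows by an entirely symmetric argument. Choose coordinates so that $p$ is the origin, the side $\tau=\widetilde{\phi}([2k,2k+1])$ lies along the positive $x$-axis, and the interior of $P$ near $\tau$ sits in the half-plane $\{y<0\}$; then $\widetilde{\phi}(t)=(x(t),0)$ with $x(t)\to 0^{+}$ as $t\to(2k+1)^-$. In both the $270^\circ$ and $360^\circ$ cases the natural endpoint of the arc $\gamma_p$ on the $\tau$-side is the point $(0,-\epsilon)$ on the circle of radius $\epsilon$ around $p$, and the corresponding endpoint on the $\nu$-side lies in a strictly separated angular region.

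The first step is to identify the accumulation set $L=\bigcap_{\delta>0}\overline{\widetilde{\psi}((2k+1-\delta,2k+1))}$. Any accumulation point $z^{*}$ satisfies $z^{*}\in S_\epsilon$ by closedness, and
\[d(p,z^{*})=\lim_{t\to(2k+1)^{-}}d(\widetilde{\phi}(t),S_\epsilon)=d(p,S_\epsilon)=\epsilon\]
by continuity of $d(\cdot,S_\epsilon)$ and because $\gamma_p\neq\emptyset$ forces $d(p,S_\epsilon)=\epsilon$; hence $z^{*}\in m_\epsilon(p)=\gamma_p$. Since $\widetilde{\psi}$ is continuous on $(2k,2k+1)$ by Lemma \ref{psi:side}, each set $\overline{\widetilde{\psi}((2k+1-\delta,2k+1))}$ is compact and connected, so the nested intersection $L$ is also compact and connected. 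Thus $L$ is either a single point or a proper subarc of $\gamma_p$.

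The second step is to show $L=\{z_1\}$ where $z_1$ is the $\tau$-side endpoint of $\gamma_p$. If no other point of $\partial P\setminus\{p\}$ lies within distance $\epsilon$ of $(0,-\epsilon)$, then $z_1=(0,-\epsilon)$, and for sufficiently small $x(t)>0$ the point $(x(t),-\epsilon)$ is in $S_\epsilon$ and realizes $d(\widetilde{\phi}(t),S_\epsilon)=\epsilon$; by uniqueness of the closest $S_\epsilon$-point at an interior side point (Lemma \ref{psi:side}), $\widetilde{\psi}(t)=(x(t),-\epsilon)\to(0,-\epsilon)=z_1$. In the degenerate sub-case there is some $q^{*}\in\partial P\setminus\{p\}$ with $d(q^{*},z_1)=\epsilon$ providing the trimming of $\gamma_p$ on the $\tau$-side, so $z_1\in m_\epsilon(p)\cap m_\epsilon(q^{*})$. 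Lemma \ref{structur.closesttomany} then produces an open interval of parameters on which $\widetilde{\psi}\equiv z_1$; this interval cannot meet $(2k+1,2k+2)$ (where $m_\epsilon(p)=\gamma_p\neq\{z_1\}$), and the geometric position of $q^{*}$ (essentially along the outward normal to $\gamma_p$ at $z_1$, hence below the $x$-axis and reached from the $\tau$-side in the parametrization of $\partial P$) together with the non-crossing Corollary \ref{structure.nonintersectingconnections} forces the interval to be of the form $(\alpha,2k+1)$, giving $\widetilde{\psi}(t)=z_1$ for $t$ just below $2k+1$.

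Applying the same reasoning at $(2k+2)^+$ identifies the right-limit with the $\nu$-side endpoint $z_2$ of $\gamma_p$, and $z_1\neq z_2$ by the hypothesis that $\gamma_p$ is a genuine arc: in the $270^\circ$ case $z_1,z_2$ sit at opposite ends of a quarter-arc in the third quadrant, and in the $360^\circ$ case they lie in opposite half-planes. The principal obstacle is the degenerate sub-case of the previous paragraph: one must verify that the interval produced by Lemma \ref{structur.closesttomany} abuts the plateau $[2k+1,2k+2]$ from the correct ($\tau$-) side, which requires pinning down the parametric location of the trimming point $q^{*}$ using the $VH(\cdot,\frac12)$ structure of $P$ together with the non-crossing of the perpendicular-foot segments.
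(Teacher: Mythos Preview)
Your first step (any accumulation point of $\widetilde\psi(t)$ as $t\to(2k+1)^-$ lies on $\gamma_p$) is correct and matches the paper. The divergence, and the gap, is in your second step. The degenerate sub-case that you yourself flag as ``the principal obstacle'' is genuinely unfinished: even granting $z_1\in m_\epsilon(p)\cap m_\epsilon(q^\ast)$ so that Lemma~\ref{structur.closesttomany} produces an interval $[t_0,t_1]$ on which $m_\epsilon(\widetilde\phi(\cdot))=\{z_1\}$, you must show this interval abuts $[2k+1,2k+2]$ at the $\tau$-end, i.e.\ that one of $\widetilde\phi(t_i)$ equals $p$ with parameter exactly $2k+1$ rather than $2k+2$. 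The appeal to ``the geometric position of $q^\ast$'' and to Corollary~\ref{structure.nonintersectingconnections} does not establish this; making it rigorous would require replaying much of the case analysis inside the proof of Lemma~\ref{structur.closesttomany} to locate $q^\ast$ parametrically on $\partial P$ relative to $p$.

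The paper avoids the case split entirely with a single half-plane bound. In the $270^\circ$ case, with $p$ at the origin, the exterior of $P$ in the first quadrant, and $r=(x_r,y_r)$ the upper endpoint of $\gamma_p$ (so $y_r\le 0$), it approaches from the side on the positive $y$-axis: for $q=(0,y_q)$ with $y_q>0$, every point of $\bigl(\mathbb{R}\times(-\infty,y_r)\bigr)\setminus D(p,\epsilon)$ is strictly farther from $q$ than $r$ is. Since $\widetilde\psi(t)\in S_\epsilon$ lies outside $D(p,\epsilon)$ and $r\in S_\epsilon$ is an admissible competitor, this forces the $y$-coordinate of $\widetilde\psi(t)$ to be at least $y_r$; combined with $L\subset\gamma_p$, the only possible limit point is $r$. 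No information about any trimming point $q^\ast$ is needed, and the $360^\circ$ case reduces to this after first confining $\widetilde\psi(t)$ to the correct half-plane via Corollary~\ref{structure.distsegment}.
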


\begin{proof} Let $p=\widetilde{\phi}([2k+1,2k+2])$. Consider first the case of $270^\circ$.

We place the origin of the coordinate system to $p$, and choose the direction of axes so that the intern angle at $p$ contains II, III and IV
quadrants, and so that the side $\widetilde{\phi}([2k,2k+1])$ goes along positive $x$-axis and $\widetilde{\phi}([2k+2,2k+3])$ - along positive $y$-axis (see Figure 24). Let the upper end of the curve $\gamma_p=m_\epsilon(p)$ is $r=(x_r,y_r)$. Consider a point $q=\widetilde{\phi}(t), t\in (2k+2,2k+3)$. The point $q$ is on the positive $y$-axis, and has coordinates $q=(0,y_q)$. Notice that $y_r\leq 0$ and $y_q>0$.

As $q$ is not a vertex, $\widetilde{\psi}(t)$ is the unique point of $S_\epsilon$ most close to $q$, and thus is
at the distance at most $d(q,r)$ from $q$. At the same time, it should be in $S_\epsilon$, and thus $\widetilde{\psi}(t)\not \in D(p,\epsilon)$. This means that $\widetilde{\psi}(t)$ has $y$-coordinate not less than the one of $r$, as any point of $(\mathbb{R}\times (y_r,-\infty))\setminus D(p,\epsilon)$ has distance to $q$ greater than $d(y,q)$.

Now, consider $(t_n)\subset (2k+2,2k+3)$, such that $t_n\rightarrow 2k+2$. Then $\widetilde{\phi}(t_n)\rightarrow p$ along $y$-axis. $d(\widetilde{\psi}(t_n),\widetilde{\phi}(t_n))\leq d(r,\widetilde{\phi}(t_n))$. For any limit point $z$ of $(\widetilde{\psi}(t_n))$ holds, thus, $d(p,z)\leq \epsilon$. As $z\in S_\epsilon$ implies $d(z,p)\geq\epsilon$, we see that $d(p,z)=\epsilon$.

By Lemma \ref{structure.cornercase}, $z\in \gamma_p$. At the same time the $y$-coordinate of any limit
point should be no less than $y$-coordinate of $r$. This means that $z=r$. As the sequence is bounded, this means that the limit $\lim  \widetilde{\psi}(t_n)$ exists and is $r$. The side $\widetilde{\phi}([2k,2k+1])$ can be dealt with in the same manner.

Now, let the angle is $360^\circ$. Then we can place the coordinate system so that the origin is at $p$ and the sides attached to it go along the positive $y$-axis. Consider again $\widetilde{\phi}(t_n)\rightarrow p$, such that $t_n\in (2k+2,2k+3)$ (and so belong to the side which is "attached" to the interior of $P$ in the second quadrant). By Corollary \ref{structure.distsegment}, the interior distance between $\widetilde{\phi}(t_n)$ and $\widetilde{\psi}(t_n)$ is the same as the Euclidean distance. This means that all $\widetilde{\psi}(t_n)$ are in the left half-plane and so are the limit points of the sequence. As the bounded sequence  $(\widetilde{\psi}(t_n))$ has at least one limit point and this point belongs the the arc $\gamma_p=m_\epsilon(p)$, the arc has some points in the closed left half-plane. Consider the highest of those points and repeat the argument we have used for the angle of $270^\circ$. The side $\widetilde{\phi}([2k,2k+1])$ can be dealt with in the same manner.
\end{proof}

Taking into the account Lemmas \ref{structure.cornercase} and \ref{psi:arc} we can use $\widetilde{\phi}^{-1}(p)$ to parameterize the arc $m_\epsilon(p)$ in such a way that $\widetilde{\psi}$ is continuous in all points of $\widetilde{\phi}^{-1}(p)$ (including the ends). This completes the construction of a continuous function $\widetilde{\psi}:[0,4n]\mapsto S_\epsilon$. As every point of $S_\epsilon$ is at the distance $\epsilon$ of some $p\in\partial P$, it is in $m_\epsilon(p)$ for some $p\in\partial P$. Thus $\widetilde{\psi}([0,4n])=S_\epsilon$, which shows that $S_\epsilon$ is a curve, may be not simple (notice that we keep the assumption that $\widetilde{\psi}(0)=\widetilde{\psi}(4n)$).

In order to show that the curve $S_\epsilon$ is simple, it is enough to show that for any $q\in S_\epsilon$ the set $\widetilde{\psi}^{-1}(q)$ is a closed interval (or a point). Then, by factorizing $[0,4n]$ by those intervals we obtain a simple parametrization of $S_\epsilon$.

By the construction, $\widetilde{\psi}^{-1}(q)$ can be more than one point only in two cases: there are $p,p'\in\partial P$ such that $q\in m_\epsilon(p)$ and $q\in m_\epsilon(p')$, or there is a vertex $p$ such that $m_\epsilon(p)=\{q\}$ (and $q\not\in m_\epsilon(p')$ for $p\neq p'\in\partial P$). In the first case the Lemma \ref{structur.closesttomany} gives us the desired property, and the second case is trivial.

Now, when we know that $S_\epsilon$ is a simple curve, Lemma \ref{structure:arcline} shows that the curve $S_\epsilon$ is rectifiable.

\subsubsection{Lipschitz property of $S_\epsilon$}

\begin{definition} We say that a simple curve $\gamma$ is $\eta$-Lipschitz, if a distance between any two of its points measured along the curve (if the curve is closed we take the shortest of the two pathes along $\gamma$ connecting the points) is at most $\eta$ times the Euclidean distance between the points. We say that a curve is $\mu$-locally $\eta$-Lipschitz if the above holds for points at Euclidean distance closer then $\mu$.
\end{definition}

Note that an $\eta$-Lipschitz curve is necessarily simple.

In this section we will show that $S_\epsilon$ is not only a simple closed curve (as it was shown above), but also an $\frac{\epsilon}4$-local $\eta$-Lipschitz curve for some universal constant $\eta>0$.

\begin{lemma}\label{distance.two.S} Given $r>0$ such that $r+\epsilon<1/10$, for every point $x\in S_\epsilon$ there exists a point $y\in S_{\epsilon+r}$, such that $d(x,y)<2 r$.
\end{lemma}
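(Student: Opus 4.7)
The plan is to take a closest boundary point $p$ to $x$, extend the segment $[p,x]$ past $x$ by distance $r$, and either use the extended point directly or replace it by a nearby ``inward corner'' of $S_{\epsilon+r}$; the slack in the bound $d(x,y)<2r$ (as opposed to $r$) is precisely what absorbs the loss incurred near corners of $\partial P$. Choose $p\in\partial P$ with $d(x,p)=\epsilon$ (it exists by compactness of $\partial P$), so by Corollary \ref{structure.distsegment} we have $(p,x]\subset P^\circ$. Set $\vec u:=(x-p)/\epsilon$ and let the primary candidate be $y_0:=x+r\vec u=p+(\epsilon+r)\vec u$, so that $d(y_0,p)=\epsilon+r$, $d(x,y_0)=r$, and hence $d(y_0,\partial P)\leq\epsilon+r$. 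Since $\epsilon+r<1/10<1/2$, the $VH(3,\frac12)$ property places the segment $[p,y_0]$ inside $\overline P$: if $p$ is interior to a side $\tau$ then $\vec u\perp\tau$ and $[p,y_0]\subset Q(\tau,P)$; if $p$ is a vertex (necessarily of interior angle at least $270^\circ$ by the argument in Lemma \ref{structure.cornercase}) then $[p,y_0]\subset Q(p,P)$.

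In the generic situation, the rectangle-or-square geometry forces $d(y_0,\partial P)=\epsilon+r$ exactly. The depth $\frac12$ of $Q(\tau,P)$ (respectively $Q(p,P)$) keeps the opposite wall at distance $>2/5>\epsilon+r$ from $y_0$, and provided the perpendicular foot $p$ lies at distance at least $\epsilon+r$ from the endpoints of $\tau$, the lateral walls are at distance $\geq\epsilon+r$ as well; any $q\in\partial P$ lying outside $Q(\tau,P)$ is therefore at distance $\geq\epsilon+r$ from $y_0$, and points on $\tau$ itself satisfy the same bound by perpendicularity. In this case take $y:=y_0$, so that $d(x,y)=r<2r$.

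In the remaining case some $q\in\partial P$ satisfies $d(y_0,q)<\epsilon+r$; combined with $d(x,q)\geq\epsilon$ (since $p$ is closest to $x$), this forces $q$ to sit on a side $\tau'$ adjoining $\tau$, or adjoining $Q(p,P)$, at a nearby vertex $v$ of $\partial P$ of interior angle $90^\circ$, $270^\circ$, or $360^\circ$. Replace $y_0$ by the ``inward corner'' $y$ of $S_{\epsilon+r}$ at $v$, i.e., the point equidistant from $\tau$ and $\tau'$ at distance exactly $\epsilon+r$ from each. A right-angle computation in axis-aligned local coordinates at $v$ yields $d(x,y)\leq r\sqrt2<2r$, while the depth-$\frac12$ rectangles $Q(\tau,P)$ and $Q(\tau',P)$ confirm that no further boundary piece is within $\epsilon+r$ of $y$, so $y\in S_{\epsilon+r}$.

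The main obstacle is handling the several angle configurations at the nearby vertex $v$ uniformly: the $90^\circ$, $270^\circ$, and $360^\circ$ cases (together with the sub-case where $p$ itself coincides with $v$) each demand a slightly different definition of the inward corner, but in every configuration the axis-aligned structure supplied by $VH(3,\frac12)$ reduces the verification to an elementary planar calculation.
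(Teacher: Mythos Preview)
Your generic case is correct and coincides with the paper's first case: when $y_0:=p+(\epsilon+r)\vec u$ already satisfies $d(y_0,\partial P)=\epsilon+r$, one has $y_0\in S_{\epsilon+r}$ with $d(x,y_0)=r$.

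The non-generic case, however, is only sketched, and the sketch hides a real difficulty. Two claims carry all the weight:
\begin{itemize}
\item that an offending $q\in\partial P$ with $d(y_0,q)<\epsilon+r$ must lie on a \emph{single} side $\tau'$ adjacent to $\tau$ (or to $Q(p,P)$);
\item that the ``inward corner'' equidistant from $\tau$ and $\tau'$ actually lies on $S_{\epsilon+r}$, i.e.\ that no \emph{third} side of $P$ comes within $\epsilon+r$ of it.
\end{itemize}
Neither follows from $VH(3,\tfrac12)$ alone. Only the depth $\beta=\tfrac12$ is bounded below; non-zero sides of $P$ may be arbitrarily short, so near the endpoint of $\tau$ the boundary can form a staircase of several short sides, more than one of which lies within $\epsilon+r$ of $y_0$. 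Your inward corner relative to $\tau,\tau'$ may then still satisfy $d(\cdot,\partial P)<\epsilon+r$ because of a further side $\tau''$, forcing an iteration whose effect on the bound $r\sqrt2$ you do not control. The rectangles $Q(\tau,P)$ and $Q(\tau',P)$ do not ``confirm that no further boundary piece is within $\epsilon+r$ of $y$'' when $|\tau'|<\epsilon+r$, since $y$ then lies outside $Q(\tau',P)$. Your last paragraph acknowledges the several configurations but asserts, rather than proves, that each reduces to an elementary calculation.

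The paper avoids this case explosion by a different mechanism. Instead of pushing $x$ outward and hoping to land on $S_{\epsilon+r}$, it takes $y\in m_{\epsilon+r}(p)$, a closest point of $S_{\epsilon+r}$ to $p$, which lies on $S_{\epsilon+r}$ by construction. When $y$ is also closest to some other $q\in\partial P$, Lemma~\ref{structur.closesttomany} (applied to $S_{\epsilon+r}$) supplies points $p_0,p_1$ with $d(y,p_0)=d(y,p_1)=\epsilon+r$ and a region $Q$ bounded by $[y,p_0]$, $[y,p_1]$ and an arc of $\partial P$; one then argues that $x\in S_\epsilon\cap Q$ and that this intersection has diameter $<2r$. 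A separate case via Lemma~\ref{structure.cornercase} handles $m_\epsilon(p)\neq\{x\}$. The staircase geometry is thus absorbed once and for all into those structural lemmas, and the present proof merely quotes them.
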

\begin{proof} Let $x\in S_{\epsilon}$, $p\in\partial P$ satisfies $d(x,p)=\epsilon$ and let $m_\epsilon(p)=\{x\}$.
Consider $y\in m_{\epsilon+r}(p)$.

If $\{y\}\not\in m_{\epsilon+r}(q)$ for $q\neq p$ then (as it has been shown) $d(p,y)=\epsilon+r$. By continuity of the distance to $\partial P$, there exists a point $x'\in S_\epsilon\cap (p,y)$. Clearly $d(x',\partial P)\leq d(x',p)$, so $d(x',p)\geq\epsilon$ and $d(x',y)\leq r$. Then $d(y,\partial P)\leq d(x',\partial P)+d(x',y)\leq \epsilon +r$ and
$y\in S_{\epsilon+r}$ implies that all the inequalities in the estimates must be equalities. Thus $d(x',p)=\epsilon$ and $x'=x$. But then $d(x',y)=r$.

Let $y\in m_{\epsilon+r}(q)$ for some $q\neq p$. Then by lemma \ref{structur.closesttomany} there are $t_0,t_1\in [0,4n]$ such that $p\in\widetilde{\phi}([t_0,t_1])$, $y\in m_{\epsilon+r}(z)$ for all $z\in\widetilde{\phi}([t_0,t_1])$ and $d(y,\widetilde{t_0})=d(y,\widetilde{t_1})=\epsilon+r$. Let $p_0=\widetilde{\phi}(t_0)$ and $p_1=\widetilde{\phi}(t_1)$. Arguing as above,we derive that $S_\epsilon\cap(y,p_0)=\{x_0\}$, $S_\epsilon\cap (y,p_1)=\{x_1\}$, and $d(y,x_0)=d(y,x_1)=r$. By continuity of the distance, $S_\epsilon\cap (y,p)\neq\emptyset$. The interval $(y,p)$ is contained in the polygon $Q$ bounded by $[y,p_0]$, $\widetilde{\phi}([t_0,t_1])$, and $[y,p_1]$. Hence $S_\epsilon\cap Q\neq \emptyset$. As we already know that $S_\epsilon$ is a simple curve, it follows that $S_\epsilon\cap Q$ is a curve $\gamma$ connecting $x_0$ and $x_1$ (and has no other points of intersection with $\partial Q$).

By Lemma \ref{structure.nonintersectingconnections} the segment $[p,x]$ does not intersect neither $[p_0,x_0]$ nor $[p_1,x_1]$, unless $x=x_0$, $x=x_1$, $p=p_0$, or $p=p_1$ , because they all are segments connecting points of $\partial P$ to its closest points of $S_\epsilon$. If $x=x_0$ or $x=x_1$ then the distance from $d(x,y)=r$, and as $m_\epsilon(p)=\{x\}$ the case $p=p_0$ implies $x=x_0$ and $p=p_1$ implies $x=x_1$.

Also $[p,x]\cap [y,x_0)=\emptyset$ and $[p,x]\cap [y,x_1)=\emptyset$ because $d(q,\partial P)\leq\epsilon$ for every $q\in [x,p]$.
Since, obviously, $[x,p)\cap\widetilde{\phi}([t_0,t_1])=\emptyset$, it follows that $[x,p)\cap\partial Q=\emptyset$ and $x\in\gamma=Q\cap S_\epsilon$ (as soon as $p\neq p_0$, $p\neq p_1$). The proof of the Lemma \ref{structur.closesttomany} describes the shape of $\partial P\cap \overline{Q}$, and this yields that $S_\epsilon\cap Q$ is contained in the part of $Q$ cut by the horizontal/vertical lines passing through $x_0$ and $x_1$ (see Figure 25). That parts of $Q$ have diameters less than $2r$, so  $d(x,y)<2r$. Notice that the argument above doesn't use that $m_\epsilon(p)=\{x\}$.

Finally, let $m_\epsilon(p)\neq \{x\}$ (i.e. $m_\epsilon(p)$ contain points other than $x$) and $y\not\in m_{r+\epsilon}(q)$ for $q\neq p$. Then, Lemma \ref{structure.cornercase} implies that $p$ is a vertex of $P$.
Denote the connected simple arc $m_{\epsilon}(p)$ by $\gamma_p\subset S_{\epsilon}$. The set $m_{\epsilon+r}(p)$ may be an arc or a point, of which we will consider only the first as the later is only degenerated case of the former.

Let $y'$ and $y''$ are the endpoints of the arc $m_{\epsilon+r}(p)$. Denote the sides adjoint to $p$ as $\tau'$ and $\tau''$. The points $y'$ and $y''$ may or may not be endpoints of the arc $\overline{D}(p,\epsilon+r)\setminus (N_{\epsilon+r}(\tau')\cup N_{\epsilon+r}(\tau''))$ (see Figure 26). We consider the case when $y''$ is and $y'$ is not an endpoint, as all the other cases can be treated by a similar argument.

As $y'$ is not an end of $\overline{D}(p,\epsilon+r)\setminus (N_{\epsilon+r}(\tau')\cup N_{\epsilon+r}(\tau''))$, there exists $p'\in \partial P\setminus(\tau\cup\tau')$ such that $d(y',p')=\epsilon+r$. Lemma \ref{structur.closesttomany} implies that in a subcurve $\gamma_p$ of $\partial P$ connecting $p$ and $p'$ all points have $y'$ as the closest point of $S_{\epsilon+r}$ and we can repeat the argument of the previous case for all points of $S_{\epsilon}$ in the region $Q$ bounded by $\gamma_p$, $[y',p]$, and $[y',p']$ (see Figure 27). On the other hand, for the region $Q^*$, bounded by $m_{\epsilon+r}(p)$, $[y',p]$, and $[y'',p]$, the set $S_\epsilon\cap Q^*$ is obviously an arc of the circle of radius $\epsilon$, thus every point of it is within distance $r$ from $m_{\epsilon+r}(p)\subset S_{\epsilon+r}$ (see Figure 28).

To complete the proof of this case it suffice that we show that $m_\epsilon(p)\subset Q\cup Q^*$. Indeed denote the point of $[y',p']\cap S_\epsilon$ as $x'$ and the point of $[y'',p]\cap S_\epsilon$ as $x''$. The arc $m_\epsilon(p)$ can not extend further than $x''$, as it should be a subset of $\overline{D}(p,\epsilon)\setminus (N_\epsilon(\tau')\cup N_\epsilon(\tau''))$. Also, as by triangle inequality $d(y',x')+d(x',p)>\epsilon+r$, and $d(y',x')=r$, we know that $d(x',p)>\epsilon$, i.e. $x'\not\in m_\epsilon(p)$. By continuity of the distance applied to $[y',p]$, we see that there are points of $m_\epsilon(p)$ inside the region $Q\cup Q^*$, and the curve $m_\epsilon(p)$ does not intersect $\partial (Q\cup Q^*)$, except probably by its end point. This means that $m_\epsilon(p)\subset Q\cup Q^*$, which completes the proof of our last case.
\end{proof}

\begin{lemma} For any $x,y\in S_\epsilon$, such that $d(x,y)=r<\frac12\varepsilon$, where $r+\epsilon<1/10$, one of the two components of $S_\epsilon\setminus\{x,y\}$ is contained in a ball of radius not greater than $3r$.
\end{lemma}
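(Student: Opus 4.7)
The strategy I would pursue is to bridge $x$ and $y$ via the inner parallel curve $S_{\epsilon+r}$. Applying Lemma \ref{distance.two.S} with the given $r$ (which is admissible because $r+\epsilon<1/10$), I obtain points $x',y'\in S_{\epsilon+r}$ with $d(x,x')<2r$ and $d(y,y')<2r$, so that $d(x',y')<5r$. Inspection of the proof of Lemma \ref{distance.two.S} shows that the point $x'$ lies on the continuation of the segment $[p_x,x]$ past $x$ (or inside the small enclosed region $Q$ discussed there), so the segment $[x,x']$ lies in the closed annular region between $S_\epsilon$ and $S_{\epsilon+r}$, and likewise for $[y,y']$. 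I would verify separately that $S_{\epsilon+r}$ is itself a simple closed rectifiable curve: this requires no new argument, since every lemma above (Lemmas \ref{Senoisolated}--\ref{psi:arc}) was established for an arbitrary parameter less than $1/10$, and $\epsilon+r<1/10$.

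The two Jordan curves $S_\epsilon$ and $S_{\epsilon+r}$ together bound a topological annulus $A$. By Corollary \ref{structure.nonintersectingconnections} and the extension of its reasoning to the pair $S_\epsilon,S_{\epsilon+r}$, the bridging segments $[x,x']$ and $[y,y']$ are disjoint (except possibly at their endpoints in the degenerate case $x'=y'$, which can be handled separately). These two segments therefore cut $A$ into two simply connected subregions $R_1,R_2$, with $\partial R_i$ consisting of: one of the arcs $\alpha_i$ of $S_\epsilon\setminus\{x,y\}$, one of the arcs of $S_{\epsilon+r}\setminus\{x',y'\}$, and the two segments $[x,x']$, $[y,y']$. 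This reduces the problem to bounding the diameter of one of the $R_i$.

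To control one of the $R_i$, I would rerun the region-$Q$ analysis from the proof of Lemma \ref{distance.two.S} in reverse: that argument shows that for the pair of radii $[p_x,x]$ and $[p_y,y]$, the portion of $S_\epsilon$ caught in the enclosed region is confined to horizontal/vertical strips whose diameter is controlled linearly by $r$. The same reasoning, applied with $S_\epsilon,S_{\epsilon+r}$ in the roles played there by $S_\epsilon,S_{\epsilon+r}$ themselves, pins the shorter arc of $S_{\epsilon+r}$ between $x'$ and $y'$ inside a region of diameter $r$; adding the bridging segments and $d(x,y)=r$ then yields the bound $3r$ for one arc of $S_\epsilon\setminus\{x,y\}$. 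The main obstacle is tracking the constants carefully: a naive triangle-inequality estimate through the bridge gives only $\sim 5r$, so the sharp bound $3r$ requires reusing the strip geometry from the proof of Lemma \ref{distance.two.S} rather than just invoking the $2r$ distance conclusion of that lemma. The trickiest sub-case is when $x$ or $y$ falls on a circular-arc portion of $S_\epsilon$ coming from a $270^\circ$ or $360^\circ$ vertex of $P$: there the closest-point map onto $\partial P$ is multivalued, and one must verify (using Lemma \ref{structure.cornercase}) that the arc of $S_{\epsilon+r}$ corresponding to the shorter side of the annulus decomposition can still be identified and bounded as above.
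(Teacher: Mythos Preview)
Your approach diverges from the paper's, and the divergence is where the gap appears. You go \emph{inward}, bridging $x,y$ to points $x',y'$ on $S_{\epsilon+r}$ and trying to control one component of the resulting annular piece. The paper instead goes \emph{outward}: it takes the foot points $p_x,p_y\in\partial P$ with $d(x,p_x)=d(y,p_y)=\epsilon$ and uses the broken line $\gamma=[p_x,x]\cup[x,y]\cup[y,p_y]$ to cut $P$ itself into two pieces $P_1,P_2$. Since every point of $\gamma$ has distance to $\partial P$ at most $\epsilon+r/2$, the curve $\gamma$ misses $S_{\epsilon+\sqrt{3}r/2}$ entirely, so that inner curve lies wholly in one piece, say $P_2$. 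The arc $\gamma_{xy}$ of $S_\epsilon$ corresponding to the $P_1$ side is then controlled in two strokes: for $z\in\gamma_{xy}\setminus P_1$ the radius $[z,p]$ cannot cross $[p_x,x]$ or $[p_y,y]$ (Corollary~\ref{structure.nonintersectingconnections}), so it hits $[x,y]$ at a point $s$ with $d(z,s)\le r/2$; for $z\in\gamma_{xy}\cap P_1$, Lemma~\ref{distance.two.S} produces a point of $S_{\epsilon+\sqrt{3}r/2}\subset P_2$ within $\sqrt{3}r$, forcing $z$ to be within $2r$ of $[x,y]$. Either way $\gamma_{xy}\subset [x,y]+D(0,2r)\subset D(\tfrac{x+y}{2},3r)$.

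The missing idea in your outline is precisely this direct use of the segment $[x,y]$ together with the radii to $\partial P$ as the separating curve. Your annular cut via $[x,x']\cup[y,y']$ never touches $[x,y]$, so you lose the mechanism that pins $\gamma_{xy}$ to a neighborhood of $[x,y]$; this is why your constants come out as $5r$ rather than $3r$. The proposed remedy---``rerun the region-$Q$ analysis from Lemma~\ref{distance.two.S} in reverse''---does not actually supply the bound: that analysis concerns the specific configuration where a \emph{single} point of the inner curve is nearest to an arc of $\partial P$, and it is not clear how it transfers to bounding an arc of $S_\epsilon$ trapped between two unrelated bridge segments. Moreover, your appeal to Corollary~\ref{structure.nonintersectingconnections} to make $[x,x']$ and $[y,y']$ disjoint is not justified as stated, since that corollary concerns segments from $\partial P$ to $S_\epsilon$, not from $S_\epsilon$ to $S_{\epsilon+r}$.
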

\begin{proof} Let $p_x,p_y\in\partial P$ satisfy $d(x,p_x)=d(y,p_y)= \epsilon$. Then all the points of $[x,p_x]$ and $[y,p_y]$ have distance to $\partial P$ not greater than $\epsilon$.

By the triangle  inequality, $\epsilon-r/2\leq d(z,\partial P)\leq \epsilon+r/2$ for every $z\in[x,y]$. The curve $\gamma=[p_x,x]\cup [x,y]\cup [y,p_y]$ divides $P$ on two parts $P_1$ and $P_2$. One of them, say $P_1$, is disjoint with $S_{\epsilon+\frac{\sqrt{3}}2r}$ because $\gamma \cap S_{\epsilon+\frac{\sqrt{3}}2r}=\emptyset$.

Let $\widetilde{\gamma}=\partial P\cap\overline{P_1}$. We have already proven that $\gamma'=\widetilde{\psi}(\widetilde{\phi}^{-1}(\tilde{\gamma}))$ is a simple curve (we consider $\widetilde{\gamma}$ on the covering of $P$).  Clearly $x,y\in\gamma'$. Denote by $\gamma_{xy}$ the part of $\gamma'$  connecting $x$ and $y$. Then $\gamma_{xy}$ is a sub-curve of the simple curve $S_\epsilon$. We will show first that $d(z,[x,y])\leq 2r$ for every $z\in\gamma_{xy}$.

Indeed, if $z \in \gamma_{xy}\setminus P_1$ we consider $p\in \partial P$ such that $d(p,z)=\epsilon$ (and so $(p,z]\subset P$). Then there is $t$ such that $\phi(t)=p$ and $\psi(t)=z$. By the choice of $\gamma_{xy}$, $t\in \phi^{-1}(\partial P\cap \partial P_1$ so $p\in \partial P\cap \partial P_1\setminus \{p_x,p_y\}$. The segment $(z,p)$ intersects thus $\gamma$, but by Corollary \ref{structure.nonintersectingconnections} it can not intersect neither $[x,p_x]$ nor $[y,p_y]$. Then there for $s\in (x,y)\cap (z,p)$ holds $d(z,s)=d(z,p)-d(s,p)\leq\epsilon-(\epsilon-
r/2)=r/2$. Hence $\gamma_{xy}\setminus P_1\subset ([x,y])_{\frac12 r}$.

If, on the other hand, $z\in P_1\cap \gamma_{xy}$ then, since $z\in S_\epsilon$,
it satisfies, by the Lemma \ref{distance.two.S}, $d(z,S_{\epsilon+\frac{\sqrt{3}r}2})\leq \sqrt{3} r$.
But $d(z,P\setminus P_1)>\sqrt{3} r$  for every $z\in P_1\setminus (D(p_x,\epsilon)\cup D(p_y,\epsilon)\cup ([x,y])_{2 r}$. We see that $\gamma_{xy}\cap P_1\subset([x,y])_{2 r}$, as $S_\epsilon\cap (D(p_x,\epsilon)\cup D(p_y,\epsilon))=\emptyset$.

In all cases we get  $\gamma_{xy}\subset [x,y]+D(0,2r)\subset D(\frac{x+y}2,3r)$.
\end{proof}

\begin{lemma} If $r<\frac14\epsilon$ then the length of intersection $S_\epsilon\cap D(a,r)$, does not exceed $20r$.
\end{lemma}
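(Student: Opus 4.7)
My approach is to decompose $S_\epsilon\cap D(a,r)$ into the smooth pieces of $S_\epsilon$---circular arcs of radius $\epsilon$ and horizontal or vertical line segments, as given by Lemma \ref{structure:arcline}---and bound (a) the length contributed by each piece, and (b) the number of such pieces meeting $D(a,r)$.

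For (a), a line-segment piece contributes at most $2r$, the diameter of $D(a,r)$. A circular-arc piece of radius $\epsilon$ meets $D(a,r)$ in a sub-arc whose chord has length at most $2r$; since $r<\epsilon/4$, the elementary formula $2\epsilon\arcsin(c/(2\epsilon))$ for the arc length in terms of chord $c\le 2r$ yields the bound $\pi r$ per arc piece. Consequently every piece of $S_\epsilon$ contributes at most $\pi r$ to the intersection with $D(a,r)$.

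For (b), I would argue that at most about six pieces of $S_\epsilon$ can meet $D(a,r)$. The pieces are in bijection with the sides of $P$ (for line-segment pieces, parallel at distance $\epsilon$ to the corresponding side) and with the reentrant $270^\circ$ and $360^\circ$ vertices of $P$ (for arc pieces, centered at the corresponding vertex). Hence a piece meets $D(a,r)$ only when its associated side or vertex lies in the thin annulus $\{x:\epsilon-r\le d(x,a)\le \epsilon+r\}$. I would combine the preceding dichotomy lemma---for any two points of $S_\epsilon$ at Euclidean distance $\rho<\epsilon/2$, one of the two $S_\epsilon$-arcs between them lies in a ball of radius $3\rho$---with the structural hypothesis $P\in VH(\cdot,\tfrac12)$, whose scale $\tfrac12\gg r$ prevents features of $P$ from clustering inside $D(a,r)$. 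A case analysis according to the type of nearby vertex of $P$ (none, convex $90^\circ$, reentrant $270^\circ$, or $360^\circ$ cusp) should then yield a universal bound $K\le 6$ on the number of participating pieces.

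Combining (a) and (b) gives $\lambda_1(S_\epsilon\cap D(a,r))\le K\pi r\le 6\pi r<20r$, as claimed. The main obstacle is step (b): the dichotomy lemma controls how far sub-arcs of $S_\epsilon$ can stray but not directly how often $S_\epsilon$ switches between pieces inside the ball, while the $VH(\cdot,\tfrac12)$ property controls spacing of features of $P$ but not the fine geometry of $S_\epsilon$. The two must be used in concert, and the delicate point is to rule out configurations in which many short pieces of $S_\epsilon$ cross $D(a,r)$ without contradicting either the simple-curve property of $S_\epsilon$ or the scale separation afforded by $\beta=\tfrac12$.
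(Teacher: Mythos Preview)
Your plan differs from the paper's proof in its core mechanism, and the gap you flag in step (b) is real and not easily closed.

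The paper does not count pieces at all. Instead it uses a projection/multiplicity argument. For the vertical segments $V\subset S_\epsilon\cap D(a,r)$: from every interior point $z\in V$ there is a horizontal segment $[z,p]$ of length $\epsilon$ with $p\in\partial P$ and $(z,p)\cap S_\epsilon=\emptyset$; hence any horizontal line meets $V$ in at most two points, so $|V|\le 4r$. The horizontal segments are handled symmetrically. For the circular arcs one splits into quadrant-arcs $C_1,\dots,C_4$ relative to their centers and projects $C_1\cup C_3$ (resp.\ $C_2\cup C_4$) onto the line $x+y=0$ (resp.\ $x-y=0$); the same ``free segment of length $\epsilon$'' argument gives multiplicity at most two, and since the projection on quadrant-arcs is $\sqrt 2$-bi-Lipschitz, each pair contributes at most $4\sqrt 2\,r$. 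Summing gives $(8+8\sqrt 2)r<20r$.

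Your step (b), bounding the number of maximal arc/segment pieces of $S_\epsilon$ meeting $D(a,r)$ by a universal constant like $6$, is not obviously true and is not supplied by either the dichotomy lemma or the $VH(\cdot,\tfrac12)$ hypothesis in the way you suggest. The dichotomy lemma tells you that $S_\epsilon\cap D(a,r)$ is essentially a single sub-arc of $S_\epsilon$, but says nothing about how many \emph{type changes} (segment $\leftrightarrow$ arc) occur along that sub-arc. Transition points of $S_\epsilon$ correspond to points equidistant $\epsilon$ from two different features of $\partial P$, and the staircase configurations exhibited in the proof of Lemma \ref{structur.closesttomany} show that $\partial P$ can have many short sides packed into an $\epsilon\times\epsilon$ square; the $\beta=\tfrac12$ condition does not prevent this. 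Turning that into a uniform bound on transitions of $S_\epsilon$ inside $D(a,r)$ would require a separate structural argument that you have not outlined. The paper's projection argument sidesteps the issue entirely: it never needs to know how many pieces there are, only that along each projection direction the curve is hit at most twice.
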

\begin{proof} Recall that $S_\epsilon$ consists  of horizontal and vertical segments, and arcs of circles of radius $\epsilon$.

Consider first only the part $V$ of $S_\epsilon\cap D(a,r)$ consisting of vertical segments. It follows from the construction that
if $z\in V$ is an interior point of a vertical segment then there exist a horizontal segment $[z,p]$ of length $\epsilon$ such that $p\in\partial P$  and $d(s,\partial P)<\epsilon$ for every $s\in (z,p)$. Hence $(z,p)\cap S_\epsilon=\emptyset$.
This means that any horizontal line intersects $V$ in at most two points.
Hence the total length of $V$ does not exceed twice the  length of the projection of $V$
onto the vertical axis which is bounded by $2r$ because $V\subset D(a,r)$. That gives an estimate $4r$ for the length of $V$.
The case of the horizontal counterpart is similar.

The part $C\subset S_\epsilon\cap D(a,r)$ consisting of circular arcs decomposes onto parts $C_1,C_2,C_3$ and $C_4$ contained respectively subarcs of the arcs in the first, second, third, and forth quarters with respect to the coordinate system centered in the center of the corresponding circle. We project $C_1\cup C_3$ orthogonally onto the line $x+y=0$. For every point $z$ of an arc in $C_1\cup C_3$ which centers in $p$, there exists an open segment parallel to $x-y=0$ which has an end in $z$, and length $\sqrt{2}\epsilon$ such that it lies in $D(p,\epsilon)$ and, thus, does not intersect $S_\epsilon$. An argument similar to the one  above proves that  every point of the line is the image of at most two points of $C_1\cup C_3$. Since the projection of the arcs is $\sqrt{2}$ bi-Lipschitz, the total length of $C_1\cup C_3$ is bounded by $4\sqrt{2}r$. The same argument works for $C_2\cup C_4$.

All together, the parts of $S_\epsilon\cap D(a,r)$ have total length bounded by $(8+8\sqrt{2})r<20r$.
\end{proof}

Combining the above two lemmas  we get that $S_\epsilon$ is $\frac1{12}\epsilon$-locally $\eta$-Lipschitz, for $\eta=60$.

\subsubsection{Decomposition of $P$}

Before we conduct the decomposition of $P$ we need the following simple lemma.

\begin{lemma}\label{partialP.Lip} When we consider the polygon as a manifold with respect to its internal metric, then $\partial P$ (as boundary with respect to the internal for $P$ metrics) is $\epsilon/4$-locally $\sqrt{2}$-Lipschitz.
\end{lemma}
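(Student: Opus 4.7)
The plan is to prove the lemma by a case analysis on the relative position of $x$ and $y$, combined with a localization argument that exploits the $VH(\cdot,\tfrac12)$ structure. Since $\epsilon/4 < 1/40 < \tfrac12$, two points at internal distance less than $\epsilon/4$ are much closer than the uniform ``thickness'' $\tfrac12$ guaranteed by the $VH$ hypothesis. Recall also that after removing the zero-length sides at $180^\circ$ angles, every vertex of $P$ has interior angle $90^\circ$, $270^\circ$, or $360^\circ$.

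The first step is a localization claim: if $d_P(x,y) < \epsilon/4$, then $x$ and $y$ lie on the same side of $P$ or on two sides sharing a common vertex. Here one uses properties (v), (vii), (viii) of $VH$, which attach to every side $\tau$ a rectangle $Q(\tau,P)\subset P$ of depth $\tfrac12$ and to every reflex vertex $v$ a square $Q(v,P)\subset P$ of side $\tfrac12$. Together these cover a $\tfrac12$-neighborhood of $\partial P$ on the interior side, so a path in $\overline P$ of length less than $\tfrac12$ starting at $x\in\tau_1$ cannot exit this neighborhood to reach a side not adjacent to $\tau_1$.

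The second step is the case analysis at the (at most single) shared vertex $v$. If $x, y$ lie on the same side, the straight segment $[x,y]$ is both the geodesic in $\overline P$ and the boundary arc, so the ratio is $1$. If $v$ is a reflex vertex (interior angle $270^\circ$ or $360^\circ$), the straight segment from $x$ to $y$ exits $\overline P$, so the shortest path in $\overline P$ is forced to pass through $v$ and has length $|x-v|+|y-v|$, matching the arc length along $\partial P$ exactly, again giving ratio $1$. The only nontrivial case is a convex vertex of interior angle $90^\circ$: writing $a = |x-v|$ and $b = |y-v|$, the geodesic is the straight segment $[x,y]$ of length $\sqrt{a^2+b^2}$, while the arc along $\partial P$ has length $a+b$, and Cauchy--Schwarz gives $(a+b)^2 \le 2(a^2+b^2)$, yielding the ratio $\sqrt 2$.

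The main obstacle is the localization step, in particular ruling out shortest paths that shortcut through the interior by bending at two or more reflex vertices. For such a path $x \to v_1 \to v_2 \to y$ the middle segment $v_1v_2$ must be a proper interior straight segment, and one has to combine the $\tfrac12$-squares at $v_1$ and $v_2$ with the simple connectedness of $P$ to force $|v_1 - v_2| \ge \tfrac12$, contradicting $d_P(x,y) < \epsilon/4$. Once this is in place, the remaining arithmetic is elementary and the $\sqrt 2$ constant arises solely from the $90^\circ$ convex corner case.
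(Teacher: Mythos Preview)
Your localization claim is where the argument breaks. The $VH(\cdot,\tfrac12)$ hypothesis does \emph{not} force two boundary points with $d_P(x,y)<\epsilon/4$ onto the same side or adjacent sides, and it does \emph{not} force $|v_1-v_2|\ge\tfrac12$ for reflex vertices joined by an interior segment. The counterexample is precisely the staircase curve $\gamma_R$ analyzed in Lemma~\ref{structur.closesttomany}: inside a square of side~$\epsilon$ the boundary can consist of arbitrarily many short sides with angles alternating between $90^\circ$ and $270^\circ$. Consecutive reflex vertices $p_{2k}$, $p_{2k+2}$ there are at distance $\ll\epsilon$, the open segment $(p_{2k},p_{2k+2})$ lies in $P^\circ$, and the squares $Q(p_{2k},P)$, $Q(p_{2k+2},P)$ all open into the \emph{same} quadrant (toward the interior point $z$), so they happily coexist. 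Thus a geodesic realizing $d_P(x,y)$ can bend at many reflex vertices, and your reduction to a single shared vertex collapses.

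The paper's proof avoids localization altogether. It takes the geodesic $\gamma\subset\overline P$ realizing $d_P(x,y)$ and decomposes it into finitely many pieces lying on $\partial P$ and finitely many maximal open straight segments in $P^\circ$. For each interior segment (necessarily of length $<\epsilon/4<\tfrac12$) with endpoints $p,p'\in\partial P$, the structural analysis of Lemma~\ref{structur.closesttomany} shows that the boundary arc between $p$ and $p'$ is a monotone axis-parallel staircase confined to a small rectangle; its length is bounded by the $\ell^1$-distance, hence by $\sqrt 2\,|p-p'|$. Replacing each interior segment by this boundary arc yields a path in $\partial P$ of total length at most $\sqrt 2\, d_P(x,y)$. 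So the constant $\sqrt 2$ comes from the $\ell^1/\ell^2$ comparison along a whole staircase, not from a single $90^\circ$ corner; your case analysis recovers only the one-step instance of this.
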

\begin{proof} Let $x,y\in \partial P$ and $d_P(x,y)<\epsilon/4$. Consider a curve which realizes this distance $\gamma\subset \overline{P}$. This curve consists of points of $\partial P$. If two points from the same side of $P$ belong to $\gamma$, then the whole interval between them is a subinterval of the side, as this is the shortest path between the two points. Thus, at most two ends of the open intervals can belong to the same side of $P$. As $P$ has finitely many sides, this means $\gamma$ contains at most finitely many open intervals of $P$ and the rest is finitely many subintervals or points of $\partial P$. It suffices thus show that every open interval of $\gamma\cap P$ (of length less than $\epsilon/4<\frac12$) can be replaced by a curve in $\partial P$ with the same ends, and the length not more than $\sqrt{2}$ as large. This argument is very similar to the one in the proof of Lemma \ref{structur.closesttomany}, and is left to the reader.
\end{proof}

To decompose $P$
we consider two curves $S_\epsilon$ and $S_{2\epsilon}$ where $\epsilon=\frac1{32}$ (remember that $P$ is re-scaled to keep $d=1$).

Since $S_\epsilon$ and $S_{2\epsilon}$ are simple curves,  each of them is a boundary of a connected simply connected domain. Thus $P$ is divided onto three parts: $\Omega_1$ bounded by $\partial P$ and $S_\epsilon$; $\Omega_2$ bounded by $S_\epsilon$ and $S_{2\epsilon}$; $\Omega_3$ bounded by $S_{2\epsilon}$.

We will decompose each part separately, though decompositions of $\Omega_2$ and $\Omega_3$ will blend on the boundary.

We begin with  $\Omega_1$. In the description bellow we use the notion of cut. By
{\it a cut} we mean a segment or a union of two segments with common ends such that one of its endpoints is contained in $\partial P\cap\partial\Omega$ and another in $S_\epsilon$. Connected components of $\Omega_1$ with all the cuts removed are elements of the desired decomposition (and due to the first condition every connected component of $\Omega\setminus P$ is attached to only one element of decomposition of $\Omega_1$). We call a cut $\kappa$ {\it regular} provided that for any pair of points $x,y$ such that $x\in \kappa, y\in \partial P\cup S_\epsilon$, the Euclidian distance $d(x,y)$ is not more than $\tilde{\eta}$ times as short ($\tilde{\eta}>0$ is a universal constant) as a most short curve in $\kappa\cup \partial P\cup S_\epsilon$ connecting $x$ and $y$, as soon as the internal for $P$ distance between $x$ and $y$ is at most $\epsilon/{36}$.

First we describe cuts with one end on a side $\tau$ adjoint to two angles of $90^{\circ}$ (see Figure 29). Note that by condition (v) of $P\in VH(.,\frac12)$ every such side has length at least $\frac12$.

Since $S_\epsilon\supset S_\epsilon(\tau)\setminus (\partial P +D(0,\epsilon))$, by the condition (v) of $P\in VH(.,\frac12)$  there is an interval $I_\tau\subset S_\epsilon$, which is parallel to $\tau$ such that $d(\tau, I_\tau)=\epsilon$, and the image of $I_\epsilon$ projected  orthogonally on $\tau$ covers all points of $\tau$ with distance from its endpoints greater then $\epsilon$.

Before we removed the sides of length zero, in order to study $S_\epsilon$, the polygon $P$ was in the class $VH(3,\frac12)$. Thus, there are no subinterval of $\tau$ of length greater than $6$ disjoint with $\partial\Omega$.
If $|\tau|\geq 6+4\epsilon$, than there is $p\in\tau\cup\partial\Omega$ and a point $z\in I_\tau$ such that $[p,z]\bot \tau$ and
$d(p,\partial\tau)>2\epsilon$. Let $[p,z]$ be a cut. If $|\tau|>6k+4\epsilon$ for some integer $k$, we make $k$ such cuts dividing $\tau$ into pieces of length greater than $\epsilon$ and less than $6+4\epsilon$ (see Figure 29). It is clear that such cuts are regular.

If the length of $\tau$ is less then $6+4\epsilon$ there still exists $p\in\tau\cap\partial \Omega$. Let $z$ is middle of the interval $I_\tau$. We show that the cut $[p,z]$ is regular. As $z$ is the middle of $I_\tau$, $|I_\tau|>\frac12- 2\epsilon=14\epsilon$ it is clear that if $x\in S_\epsilon$ and $d(x,[p,z])<\epsilon/36$ then $x\in I_\tau$. As $|\tau|< 196\epsilon$ and $d(I_\tau,\tau)=\epsilon$, it is clear that there exists a universal constant $\tilde{\eta}$ such that $d(x,z)+d(z,y)<\eta d(x,y)$, for $x\in S_\epsilon, y\in [p,z]$, when $d_P(x,y)\leq \epsilon/36$.

Notice that either $d(p,\partial \tau)>\epsilon/36$ or the angle between $[p,z]$ and $\tau$ is less than $\frac{\pi}3$ (see Figure 30). Let $x\in \partial P$, $y\in [p,z]$ and $d_P(x,y)\leq \epsilon/36$. If $d(p,\partial \tau)>\epsilon/36$ then, as $Q(\tau,P)\cap \partial P=\emptyset$, it must be that $x\in\tau$, and the case follows to the same argument as the previous one.

On the other hand if $x\not\in \tau$ then it has to be outside of $Q(\tau,P)\cup\tau$. In such a case (see Figure 31) $d(p,\partial\tau)<\epsilon/36$ and $d_P(y,x)\geq \frac{2}{\sqrt{3}} d_P(p,y)$, so $d_P(p,x)<d_P(p,y)+d_P(y,x)<\epsilon/12$. By Lemma \ref{partialP.Lip} there exists $\gamma_1(p,x)\subset \partial P$ such that $|\gamma_1|\leq \sqrt{2}d_P(p,x)\leq \sqrt{2}(\frac{\sqrt{3}}{2}+1)d_P(y,x)<6 d_P(y,x)$. Thus, the curve $[y,p]\cup \gamma_1$ connects $x$ and $y$ inside $\partial P\cup [p,z]$, and $|[y,p]\cup \gamma_1|\leq 8 d_P(x,y)$, which completes the proof of the regularity of the cup $[p,z]$.

Consider now a vertex $p$ adjoint to an angle of $360^\circ$. As a matter of fact there are two non-zero and one zero length sides ending in $p$. Because of the zero length side, we know that $p\in \partial\Omega$.
We make a cut $[p,z]$ where $z\in S_\epsilon$ to be a continuation an the non-zero sides with an end in $p$ (see Figure 32). As $z\in m_\epsilon(p)$ for any $y\in [p,z]$ holds $d(y,z)=d(y,S_\epsilon)$. Thus, if $d(y,x)<\epsilon/36$, $x\in S_\epsilon$, then $d(y,z)<d(y,x)$ and by the triangle inequality $d(z,x)<2 d(y,x)$. Applying $\epsilon/12$-local $\eta$-Lipschitz property of $S_\epsilon$ we can find a curve $\gamma_1(z,x)\subset S_\epsilon$ such that $|\gamma_1|<2\eta d(x,y)$. The curve $\gamma(y,x)=[y,z]\cup \gamma_1$ has thus length at most $(2\eta+1)d(x,y)$, which proves regularity of the cut $[p,z]$ on that end of the cut. On the other hand, $[p,z]$ is orthogonal to a side of $Q(p,P)$, thus $d(y,p)=d(y,\partial P)$ for every $y\in [p,z]$ and a similar argument show regularity of the cut $[p,z]$ on its other end.

Let us consider a side $\tau$ adjoint to two angles of $270^\circ$. By the conditions (v) and (vii) of $P\in VH(.,\frac12)$ we conclude that there is $I_\tau\subset S_\epsilon$, which is parallel to $\tau$ and its orthogonal projection that covers all $\tau$ (see Figure 33). Thus we take $p\in \tau\cap \partial \Omega$ and make the cut $[p,z]\bot \tau$. The same argument as before assure that the cut is regular, though we have to modify the definition of a regular cut so that the estimate of the distance along the curve via Euclidian holds only till the next cut.

Observe now that after performing all the above cuts $\Omega_1$ is divided into such pieces that every side of $\partial P$ which belongs entirely to the closure of one of them, say $A$, is adjoint to exactly one angle of $90^\circ$ and one of $270^\circ$. This means that curves $\overline{A}\cap \partial P$ are $\sqrt{2}$-Lipschitz not only with respect to interior metrics of $P$ but also with respect to Euclidean metrics.

Now, we see that the boundary of every piece $A$ is an $\epsilon/36$-locally $\tilde{\eta}$-Lipschitz curve (if two points of $\partial A$ are closer  than $\epsilon/36$, then they are either both on $\partial P$ or both on $S_\epsilon$ or both on the  cut, or one  of them on the cut and another one on $S_\epsilon$ or $\partial P\cap \overline{A}$. In all the cases the required inequalities are satisfied according to what we have already demonstrated.

Let us observe now that if some piece $A$ has diameter less than $100$ than $A$ is
$M$-elementary for some constant $M=M(\tilde{\eta},\epsilon)$. Indeed, there exist an $\epsilon/36$ net $X\subset\partial A$ with cardinality $M'=M'(\epsilon)$. Any two elements
$x,y\in X$ can be joint by the chain of its elements $x=a_1,a_2,\dots,a_k=y$ such that
$d(a_j, a_{j+1})<\epsilon/36$ and then, from the locally Lipschitz property, by the curve
of the length smaller then $M''=M'\cdot \tilde{\eta}\epsilon$ contained in $\partial A$. Hence for any two points $s,w\in \partial A$  either $d(s,w)\leq\epsilon/36$ and then there is a curve in $\partial A$ shorter then $\tilde{\eta}d(s,w)$ joining them,
or $d(s,w)>\epsilon/36$ and then the curve in $\partial A$ joining $s$ with the closest element $x\in X$,
then joining $x$ with $y\in X$ which is the closest to $w$ element of $X$ finally joining $y$ with $w$ has total length not exceeding $(M''+2\tilde{\eta})\epsilon<M \epsilon/36<M d(s,w)$.
Hence $A$ is $M$ elementary for $M=36(M''+2\tilde{\eta})$.

It remains to deal with the pieces of diameter greater than $100$.

Let $A$ is such a piece. It follows from the construction  that $\overline{A}\cap\partial P$ is a broken line of vertical and horizontal segments of which each but the first and the last is adjoint to one angle of $90^\circ$ and one angle of $270^\circ$. W.l.o.g. we may assume that the position of the interior of $A$ is right - up with respect to this broken line, i.e. that $90^\circ$ angles are in the first quadrant (see Figure 34). Notice that due to the construction and the conditions (v) and (vi) of $P\in VH(.,\frac12)$ the curve $S_\epsilon\cap \partial A$ consists of vertical and horizontal intervals and the circle arcs which belong to the first quarter, except possibly for the first and last ones. Let $N$ be a line, which makes $45^\circ$ with the coordinate axes and pass through the second and forth quadrants. The orthogonal projection of both $S_\epsilon$ and $\partial P$ onto $N$ is bi-Lipschitz except for the $200\epsilon$-neighborhood of the the ends of the projection. Let us now choose a point of $\partial P\cap\partial\Omega$ which is projected at the distance of approximately $50$ (with the marginal of $10$) from the upper end of the projection of $A$. We make a cut at this point in the direction orthogonal to $N$. Due to the bi-Lipschitz property of the projection in $\epsilon$-neighborhood of the cut, the cut is regular and the upper components of $A\setminus \overline{py}$ has diameter less than $100$, thus being $M$-elementary, while the other has diameter
at least $30$ smaller then the diameter of $A$. Repeating this operation suitable number of times we cut $A$ into $M$-elementary pieces.

To decompose $\Omega_2\cup S_{2\epsilon}\cup \Omega_3$ we start by looking at all closed diadic squares with side $\epsilon/16$ which intersect $\Omega_3$. Denote the union of the squares by $\widetilde{\Omega}$. It is clear that $\Omega_3\subset\widetilde{\Omega}\subset \Omega_2\cup S_{2\epsilon}\cup \Omega_3\subset \Omega$. Let $\Omega^*$ is the complement of the infinite component of $\mathbb{R}^2\setminus \widetilde{\Omega}$. As $\Omega$ simply connected, $\Omega^*\subset\Omega$. Also $\partial\Omega^*\subset\partial\widetilde{\Omega}$ and so is within $\epsilon/8$ distance from $S_{2\epsilon}$. As each diadic square is $M$-elementary, it remains only to divide $\Omega_2\setminus\Omega^*$ into $M$-elementary pieces.

Observe, that $\partial \Omega^*$ is a simple curve which is $\epsilon/32$-locally $\sqrt{2}$- Lipschitz. Let $K$ is a collection of points on $\partial \Omega^*$ which are at the distance between $30$ to $40$ units from the next one, measured by the length of $\partial \Omega^*$. For a point $z\in K$ let $s\in S_\epsilon$ is such that $d(s,z)=d(S_\epsilon,z)$. Let $r$ is middle of $[s,z]$. Then choose $q\in \partial \Omega^*$ such that $d(q,r)=d(\partial \Omega^*, r)$. Notice, that $\frac78\epsilon<d(s,z)\leq\epsilon$ as $d(z,S_{2\epsilon})<\frac18\epsilon$. As well, $d(s,q)\geq d(s,\partial\Omega^*)\geq d(s,\partial S_{2\epsilon})-d(\partial \Omega^*,S_{2\epsilon})\geq \frac78\epsilon$. By the triangle inequality this implies $d(r,q)>\frac38\epsilon$, while $d(r,q)\leq d(r,z)\leq\frac12\epsilon$. This, in particular shows that the angle between two intervals of the cut $\kappa=[s,r]\cup[r,q]$ is obtuse.

We claim that the cut $\kappa(z)=[p,r]\cup[r,q]$ is a regular one. Indeed, if $x\in S_\epsilon, y\in \kappa$ and $d(x,y)<\epsilon/36$ then $y\in [s,r]$. As $s$ is the closest to $r$ point of $S_\epsilon$, $d(y,s)\leq d(y,x)$, and by the triangle inequality $d(x,s)\leq 2d(x,y)<\epsilon/12$. Now, by $\epsilon/12$-locally Lipschitz property of $S_\epsilon$ there exists a curve $\gamma_1(x,s)\subset S_\epsilon$, such that $|\gamma_1|\leq\eta d(x,s)$, thus $\gamma(x,y)=\gamma_1\cup[s,y]$ satisfies $|\gamma|\leq (2\eta+1)d(x,y)$. The other end of the cut can be treated by a similar argument.

After we remove all the cuts, $\Omega_1\setminus\Omega^*$ is divided in pieces of diameter less than $100$ with $\epsilon/36$-locally $\tilde{\eta}$-Lipschitz boundary. As before, those pieces are $M$-elementary.

This completes the decomposition of $P$ in $M$-elementary pieces, and thus completes the decomposition of $\Omega$.


\begin{thebibliography}{HD}
\normalsize
\baselineskip=17pt
\bibitem[ACPP]{ACPP} G. Alberti, M. Cs\"ornyei, A. Pe\l czy\'nski, D.Preiss. \emph{BV has the bounded approximation property.} J. Geom. Anal. 15 (2005), no. 1, 1--7.
\bibitem[AFP]{AFP} L. Abrosio, N. Fusco, D. Pallara. \emph{Functions of Bounded Variation and Free Discontinuity Problems} Oxford Mathematical Monographs, 2000
\bibitem[Mar]{Mar} O. Martio. \emph{John domains, bi-lipschitz balls and Poincare inequality.} Rev. Roumaine Math. Pures Appl. 33 (1988), no.1--2, 107--112
\bibitem[PW1]{PW1} A. Pelczynski, M. Wojciechowski \emph{Handbook of Banach the geometry of BS, Sobolev Spaces}
\bibitem[PW2]{PW2} A. Pelczynski, M. Wojciechowski \emph{Contribution to the isomorphic classification of Sobolev Spaces $L^p_k(\Omega) (1\leq p<\infty)$}; Recent Progress in Functional Analysis, Proceedings Valdivia Conference, Valencia, July 2000, K. D. Bierstedt, J. Bonet, M. Maestre and J, Schmets edt. North- Holland Math. Stud. (2001), 133-142
\bibitem[Pom]{Pom} Ch. Pommerenke. \emph{Boundary behaviour of conformal maps.} Fundamental principals of Mathematical Sciences, 299, Springer-Verlag, 1992
\bibitem[St]{St} Stein. \emph{Singular Integrals and Differentiable Properties of Functions}
\bibitem[Tuk]{Tuk} P. Tukia. \emph{The planar Schenflies teorem for Lipschitz maps.} Ann. Acad. Sci. Fenn. Ser. A I Math. 5 (1980), no. 1, 49--72.
\bibitem[W]{W} M. Wojciechowski \emph{Bounded approximation property of spaces of functions of bounded variation of arbitrary order}
\bibitem[Woj]{Woj} P. Wojtaschyk \emph{Banach Spaces for Analysts} Cambridge Studies in Advanced Mathematics, 25, Cambridge University Press, 1991
\bibitem[Z]{Z} W.P. Zeimer. \emph{Weakly differentiable functions.} Graduate Texts in Mathematics, 120, Springer-Verlag, 1989
\end{thebibliography}
\end{document}